\NeedsTeXFormat{LaTeX2e}
 
\documentclass{amsart}
\usepackage{a4}
\usepackage{amssymb}
\usepackage{amsmath}
\usepackage{amsthm}
\usepackage{enumerate}
\usepackage{mathrsfs}  

\usepackage{stmaryrd}

\def\ombb{\mathord{\mbox{\hspace*{0pt}}\;\!\overline{\!\!\:\omega\!\!\:}\;\!}\mbox{\hspace*{0pt}}}

\def\squareforqed{\hbox{\rlap{$\sqcap$}$\sqcup$}}
\def\qed{\ifmmode\squareforqed\else{\unskip\nobreak\hfil
\penalty50\hskip1em\null\nobreak\hfil\squareforqed
\parfillskip=0pt\finalhyphendemerits=0\endgraf}\fi\medskip}

\def\ovphantom{\mathord{\mbox{\hspace*{0pt}}\,\overline{\!\phantom{x}\!\!\;}\!\:}\mbox{\hspace*{0pt}}}
\def\ovA{\mathord{\mbox{\hspace*{0pt}}\,\overline{\!A\!\!\;}\!\:}\mbox{\hspace*{0pt}}}
\def\ovB{\mathord{\mbox{\hspace*{0pt}}\,\overline{\!B\!\!\;}\!\:}\mbox{\hspace*{0pt}}}
\def\ovC{\mathord{\mbox{\hspace*{0pt}}\,\overline{\!C\!\!\;}\!\:}\mbox{\hspace*{0pt}}}
\def\ovD{\mathord{\mbox{\hspace*{0pt}}\,\overline{\!D\!\!\;}\!\:}\mbox{\hspace*{0pt}}}
\def\ovE{\mathord{\mbox{\hspace*{0pt}}\,\overline{\!E\!\!\;}\!\:}\mbox{\hspace*{0pt}}}
\def\ovF{\mathord{\mbox{\hspace*{0pt}}\,\overline{\!F\!\!\;}\!\:}\mbox{\hspace*{0pt}}}
\def\ovM{\mathord{\mbox{\hspace*{0pt}}\,\overline{\!M\!\!\;}\!\:}\mbox{\hspace*{0pt}}}
\def\ovN{\mathord{\mbox{\hspace*{0pt}}\,\overline{\!N\!\!\;}\!\:}\mbox{\hspace*{0pt}}}

\def\ovu{\mathord{\mbox{\hspace*{0pt}}\,\overline{\!u\!\!\;}\!\:}\mbox{\hspace*{0pt}}}

\def\ovx{\mathord{\mbox{\hspace*{0pt}}\,\overline{\!x\!\!\;}\!\:}\mbox{\hspace*{0pt}}}
\def\ovy{\mathord{\mbox{\hspace*{0pt}}\,\overline{\!y\!\!\;}\!\:}\mbox{\hspace*{0pt}}}
\def\ovz{\mathord{\mbox{\hspace*{0pt}}\,\overline{\!z\!\!\;}\!\:}\mbox{\hspace*{0pt}}}

\def\ombb{\mathord{\mbox{\hspace*{0pt}}\;\!\overline{\!\!\:\omega\!\!\:}\;\!}\mbox{\hspace*{0pt}}}

\def\cn{\mathord{{\;\!{:}\;\!}}}
\def\EE{\mathord{\mathrm{E}}}
\def\CC{\mathord{\mathrm{C}}}
\def\NN{\mathord{\mathrm{N}}}
\def\II{\mathord{\mathrm{I}}}
\def\QQ{\mathord{\mathrm{Q}}}

\def\OO{\mathord{\mathbb{O}}}

\def\Orb{\operatorname{\mathcal{O}}}

\newcommand{\Fq}{\mathbb{F}_q}

\renewcommand{\bar}{\overline}
\newcommand{\omg}{\overline{\omega}}
\newcommand{\xone}{$e_{-1}$}
\newcommand{\xtwo}{$e_{\ombb}$}
\newcommand{\xthree}{$e_{\omega}$}
\newcommand{\xfour}{$e_{0}$}
\newcommand{\xfive}{$e_{-0}$}
\newcommand{\xsix}{$e_{-\omega}$}
\newcommand{\xseven}{$e_{-\ombb}$}
\newcommand{\xeight}{$e_{1}$}
\newcommand{\m}{$-$}
\newcommand{\im}{\mathrm{Im}\,}

\newcommand{\diag}{\mathrm{diag}}

\newcommand{\SL}{\mathrm{SL}}
\newcommand{\GO}{\mathrm{GO}}
\newcommand{\CGO}{\mathrm{CGO}}
\newcommand{\SO}{\mathrm{SO}}

\newcommand{\SE}{\mathrm{SE}}
\newcommand{\E}{\mathrm{E}}

\newcommand{\Spin}{\mathrm{Spin}}
\newcommand{\Tr}{\operatorname{T}}

\newcommand{\J}{\mathbb{J}}

\renewcommand{\bar}{\overline}
\renewcommand{\hat}{\widehat}

\renewcommand{\ker}{\operatorname{ker}}
\newcommand{\T}{\top}

\newcommand{\fdet}{\Delta}

\newcommand{\inner}[2]{\langle #1, #2 \rangle}

\makeatletter
\renewcommand*\env@matrix[1][c]{\hskip -\arraycolsep
  \let\@ifnextchar\new@ifnextchar
  \array{*\c@MaxMatrixCols #1}}
\makeatother

\usepackage{marginnote}
\usepackage{marginfix}

\newtheorem{theorem}{Theorem}
\numberwithin{theorem}{section}
\newtheorem{lemma}[theorem]{Lemma}
\newtheorem{corollary}[theorem]{Corollary}
\newtheorem{definition}[theorem]{Definition}
\newtheorem{proposition}[theorem]{Proposition}
\newtheorem*{remark}{Remark}

\newcommand{\Mod}[1]{\ (\mathrm{mod}\ #1)}

\title
[Octonions, Albert vectors and $\E_6(F)$]
{Octonions, Albert vectors and the group $\E_6(F)$}
\author{John~N.~Bray,\ Yegor~Stepanov,\ Robert~A.~Wilson}
\address{}
\email{johnnbray@hotmail.com, yegor@ystepanoff.net, r.a.wilson@qmul.ac.uk}

\begin{document}
\maketitle

\begin{abstract}
We present a uniform approach to the construction of the groups of type 
$\EE_6$ over arbitrary fields without using Lie theory. This gives a simple 
description of the group generators and some of the subgroup structure. 
In the finite case our approach also permits relatively straightforward 
computation of the group order. \newline
\end{abstract}
\tableofcontents
\section{Introduction}
The construction of the groups of type $\EE_6$ goes back to the work of Dickson
\cite{Dickson1, Dickson2}. He constructed the analogue of the complex Lie group $\EE_6$
as a linear group in $27$ variables which leaves a certain cubic form invariant. 
Jacobson, inspired by Dickson and by Chevalley's T\^{o}hoku paper \cite{ChevTohoku},
studied the automorphism group of an Albert algebra (this algebra consists of the 
$3\times 3$ Hermitian matrices written over an octonion algebra), 
and the stabiliser of the determinant
over the fields of characteristic not $2$ or $3$ in a series of papers \cite{Jac1, Jac2, 
Jac3}. For instance, he proved that if the Albert algebra contains nilpotent elements,
then the group is simple. It must have been implicit that the determinant of the elements
in the Albert algebra is essentially the same as Dickson's cubic form, although Jacobson
does not refer to Dickson. Moreover, although cases of characteristic $2$ and $3$ 
were of no problem to Dickson, they were problematic in Jacobson's construction. 
The series of papers by Aschbacher \cite{Asch1, Asch2, Asch3, Asch4, Asch5} also addresses the question
of constructing the groups of types $\EE_6$ and ${}^2\EE_6$ without mentioning Albert algebras or octonions at all. It is to be emphasised that Aschbacher's 
construction is of a rather abstract nature and some of the structural aspects require further 
research. In a preprint by R.~A.~Wilson \cite{WilsonPaper} the construction of
finite simple groups $\EE_6(q)$, $\mathrm{F}_4(q)$ and ${}^2\EE_6(q)$ is sketched.  

In the late 1980s the problem of classifying the maximal subgroups came into prominence. Aschbacher's 
study of the $27$-dimensional module for $\EE_6$ reveals much more structure rather than 
the standard $78$-dimensional representation. However, Aschbacher does not give a complete
list of maximal subgroups, which means there is still a need for a modern review of Dickson's 
construction.

\section{Some properties of $\Omega_{2m}(F, Q)$}
\label{section:orthogonal}

Let $V$ be a vector space of dimension $n$ over a field $F$ . We assume that there is a 
non-singular quadratic form $Q$ defined on $V$. Denote by $\GO_n(F,Q)$ the group of 
non-singular linear transformations that preserve the
form $Q$. In case $F$ is of characteristic $2$ we define the \textit{quasideterminant}
$\operatorname{qdet}:\GO_n(F,Q) \rightarrow \mathbb{F}_2$ to be the map
\begin{equation}
    \operatorname{qdet}: g \mapsto \dim_F ( \im(\II - g) )\ \operatorname{mod}\ 2.
\end{equation}
Further, the group $\SO_n(F,Q)$ is the kernel of the (quasi)determinant map. Finally,
the subgroup $\Omega_n(F,Q)$ of $\SO_n(F,Q)$ is defined as the kernel of another 
invariant called the spinor norm. If the characteristic of the field is not $2$, 
there exists a double cover of $\Omega_n(F,Q)$, denoted as $\Spin_n(F,Q)$. 

This section is devoted to some of the private life of the group $\Omega_{2m}(F,Q)$, which 
will be crucial in our further constructions. Consider the vector space $V$ of dimension
$2m+2$ over $F$ with a non-singular quadratic form $Q$ defined on it. 
Assuming that the Witt index of $Q$ is at least $1$, we can pick 
the basis $\mathcal{B} = \{v_1, w_1,\ldots,w_{2m}, v_2\}$ in $V$ such that $(v_1,v_2)$ 
is a hyperbolic pair. Consider the decomposition $V = \langle v_1 \rangle \oplus
\langle w_1, \ldots, w_{2m} \rangle \oplus \langle v_2 \rangle$ and denote 
$W = \langle w_1, \ldots, w_{2m} \rangle$. Further, denote by $Q_W$ the restriction of 
$Q$ on $W$.

\begin{lemma}
    \label{lemma:1_stabiliser_omega}
	The stabiliser in $\Omega_{2m+2}(F,Q)$ of the vector $v_1$ 
	is a subgroup of shape $W\cn\Omega_{2m}(F,Q_W)$, and the stabiliser of 
	the pair $(v_1,v_2)$ is a subgroup $\Omega_{2m}(F,Q_W)$. 
    
\end{lemma}

Note that any element in $\Omega_{2m+2}(F,Q)$ which fixes $v_1$ also stabilises
$\langle v_1 \rangle^{\perp}$, so with respect to the basis $\mathcal{B}$ it has the 
following matrix form:
\begin{equation}
	    \hat{A} = \left[
            \begin{array}{c|c|c}
                1 & 0 & 0\  \\ \hline
                 & & \\
                u_2^\T &\ \ \ \ \ A\ \ \ \ \  & 0\  \\
                 & & \\ \hline
                \mu & u_1 & \lambda\
            \end{array}
        \right],
\end{equation}
where $\lambda \in F$, $u_1, u_2 \in W$, and the matrix $A$ acts on $W$ as an element of $\Omega_{2m}(F,Q_W)$. Let $f$ be the 
polar form of $Q$. Then an element $\hat{A}$ in the stabiliser of $v_1$ 
acts on $v_2$ as
    \begin{equation}
        \begin{array}{r@{\;\;}c@{\;\;}l}
            v_2 & \mapsto & (\mu \mid u_1 \mid \lambda).
        \end{array}
    \end{equation}
The bilinear form $f$ is preserved, so we get
    \begin{equation}
        1 = f(v_1,v_2) = \mu f(v_1,v_1) + f(v_1, (0\mid u_1 \mid 0)) + \lambda f(v_1, v_2)
        = \lambda,
    \end{equation}
and hence $\lambda = 1$. 

Since $(v_1,v_2)$ is a
    hyperbolic pair, the form $f$ on $V$ can be represented by the Gram matrix
    \begin{equation}
        [f]_{\mathcal{B}} = \left[
            \begin{array}{c|c|c}
                0 & 0 & 1  \\ \hline
                 & & \\
                0 &\ \ \ \ \ B\ \ \ \ \  & 0 \\
                 & & \\ \hline
                1 & 0 & 0
            \end{array}
        \right],
    \end{equation}
where $B$ is the Gram matrix of $f_W$, the restriction of $f$ on $W$. 
We explore the fact that an element in the stabiliser of $v_1$ preserves the
    form $f$:
    \begin{equation}
        [f]_{\mathcal{B}} = \hat{A} \cdot [f]_{\mathcal{B}} \cdot \hat{A}^{\T} = 
        \left[
            \begin{array}{c|c|c}
                0 & 0 & 1\ \\ \hline
                & & \\
                0 &\ \ \ \ ABA^\T\ \ \ \ & ABu_1^\T + u_2^\T \\
                & & \\ \hline
                1 & u_2+u_1 BA^\T & 2\mu + u_1 B u_1^\T
            \end{array}
        \right].
    \end{equation}
It follows that $u_2 = -u_1BA^\T$. We also have
\begin{multline}
	0 = Q(v_2) = Q(v_2 \hat{A}) = Q( (\mu \mid u_1 \mid 1) ) = \\
        = Q( (\mu \mid u_1 \mid 0) ) + Q(v_2) + f( (\mu\mid u_1 \mid 0), v_2)
        = Q_W(u_1) + \mu,
\end{multline}
so $\mu = -Q_W(u_1)$. As a result, the general element $\hat{A}$ in the stabiliser of
$v_1$ has the following form:
\begin{equation}
		    \hat{A} = \left[
            \begin{array}{c|c|c}
                1 & 0 & 0\  \\ \hline
                 & & \\
                -AB^{\T}u_1^{\T} &\ \ \ \ \ A\ \ \ \ \  & 0\  \\
                 & & \\ \hline
                -Q_W(u_1) & u_1 & 1\
            \end{array}
        \right].
\end{equation}

Witt's lemma tells us that the group $\GO_{2m}(F,Q_W)$ acts transitively 
on the set of non-zero vectors of each norm in $W$. In fact,
the same is true for $\Omega_{2m}(F,Q_W)$ in case when $Q_W$ 
is of Witt index at least $1$. 

\begin{lemma}
    \label{lemma:1_omega_transitive}
    The group $\Omega_{2m}(F,Q_W)$, where $Q_W$ is of Witt index at least $1$, 
    acts transitively on
    \begin{equation*}
	O_{\lambda} = \left\{ v \in W \ \big| Q_W(v) = \lambda,\ v \neq 0\right\}
    \end{equation*}
    for each value of $\lambda \in F$.
\end{lemma}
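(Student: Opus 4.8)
The plan is to derive transitivity of $\Omega_{2m}(F,Q_W)$ from transitivity of the full group $\GO_{2m}(F,Q_W)$, which is Witt's lemma as already recorded, by a coset argument. Fix $\lambda\in F$ and a vector $v\in O_\lambda$. Both the spinor norm and the (quasi)determinant are homomorphisms from $\GO_{2m}(F,Q_W)$ to abelian groups, so $\Omega_{2m}(F,Q_W)$ is normal with abelian quotient. Since $\GO_{2m}(F,Q_W)$ is transitive on $O_\lambda$, the $\Omega_{2m}(F,Q_W)$-orbit of $v$ is all of $O_\lambda$ if and only if
\[
\GO_{2m}(F,Q_W)=\Omega_{2m}(F,Q_W)\cdot\operatorname{Stab}(v),
\]
that is, if and only if $\operatorname{Stab}(v)$ surjects onto the quotient $\GO_{2m}(F,Q_W)/\Omega_{2m}(F,Q_W)$. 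Thus the entire problem reduces to exhibiting, inside the stabiliser of $v$, elements realising every class of that quotient.

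Next I would recall how the quotient is detected. By the Cartan--Dieudonn\'{e} theorem every element is a product of reflections $r_w$ in non-singular vectors $w$ (orthogonal transvections in characteristic $2$), and $\Omega_{2m}(F,Q_W)$ is cut out by the (quasi)determinant together with the spinor norm: $r_w$ has determinant $-1$ (quasideterminant $1$) and spinor norm equal to the square class of $Q_W(w)$. Hence $\GO_{2m}(F,Q_W)/\Omega_{2m}(F,Q_W)$ is generated by the images of reflections and is governed by the pair consisting of the parity of the number of reflections and the product of their norms modulo squares.

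The key observation is that a reflection $r_w$ with $w\in v^\perp$ fixes $v$ and so lies in $\operatorname{Stab}(v)$. A single such reflection realises the non-trivial (quasi)determinant class, while a product $r_{w_1}r_{w_2}$ of two of them lies in $\SO_{2m}(F,Q_W)$ and realises the spinor-norm class $Q_W(w_1)Q_W(w_2)$ modulo squares. Consequently $\operatorname{Stab}(v)$ surjects onto the quotient as soon as the non-singular vectors of $v^\perp$ realise every square class of $F^\ast$ as a value of $Q_W$. Here is where the Witt-index hypothesis enters: I would show that $v^\perp$ contains a hyperbolic plane and that a form containing a hyperbolic plane is universal, so its non-singular vectors assume every non-zero value and meet every square class. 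For $\lambda=0$ one takes $v$ to be a member of a hyperbolic pair, reading off the hyperbolic plane in $v^\perp$ directly; for $\lambda\neq0$ one writes $W=\langle v\rangle\perp v^\perp$ and uses the Witt decomposition of $Q_W$ to place a hyperbolic plane inside $v^\perp$.

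The main obstacle is exactly this universality step, namely guaranteeing that $v^\perp$ carries a hyperbolic plane. When the Witt index is large enough --- in particular for the split and quasi-split forms occurring in the $\EE_6$ construction, and for all non-degenerate forms of dimension at least $3$ over finite fields --- this is automatic and the argument runs for every $\lambda$; the borderline case of small Witt index, where an anisotropic $v^\perp$ could fail to realise every square class, must be treated with care. A secondary technical point is characteristic $2$: there reflections are replaced by orthogonal transvections and the determinant by the quasideterminant, so the generation statement and the spinor-norm bookkeeping should be re-checked in that setting, after which the same coset argument applies.
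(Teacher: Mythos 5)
The paper states this lemma without proof, so there is no argument of the authors' to compare against; your Frattini-type coset argument (reduce transitivity of $\Omega_{2m}$ to surjectivity of the quasideterminant and spinor norm on the stabiliser of a single $v\in O_\lambda$, then realise every class of $\GO_{2m}/\Omega_{2m}$ by reflections in anisotropic vectors of $v^\perp$) is the standard route and almost certainly the intended one. Under the hypothesis that is actually in force everywhere the lemma is used --- Witt index at least $2$, imposed immediately after the lemma and assumed in Theorem \ref{theorem:1_omega_maximal} --- your argument closes completely: for $\lambda\neq 0$ take $v=e_1+\lambda f_1$ inside the first hyperbolic plane, so that $v^\perp$ contains the second hyperbolic plane and $Q_W|_{v^\perp}$ is universal; for $\lambda=0$ take $v=e_1$, so that $v^\perp=\langle v\rangle\perp\langle e_1,f_1\rangle^\perp$ with the second summand of Witt index at least $1$, hence again universal. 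Modulo the routine characteristic-$2$ adjustments you mention, this proves the case the paper needs.

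At the stated level of generality (Witt index exactly $1$), however, there is a genuine gap, and it is not merely the bookkeeping issue you flag. Your claim that for $\lambda=0$ one can ``read off the hyperbolic plane in $v^\perp$ directly'' is false: if $(v,w)$ is a hyperbolic pair then $v^\perp=\langle v\rangle\perp\langle v,w\rangle^\perp$, the vector $v$ lies in the radical of the polar form restricted to $v^\perp$, and when the Witt index is $1$ the complement $\langle v,w\rangle^\perp$ is anisotropic, so $v^\perp$ contains no hyperbolic plane at all. Worse, the gap cannot be closed, because the lemma itself fails at Witt index $1$ over general fields. Take $F=\mathbb{Q}$ and $W=\langle e,f\rangle\perp U$ with $(e,f)$ hyperbolic and $Q_W|_U=x^2+y^2$. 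The stabiliser of $e$ in $\SO_4$ is $U\cn\SO(U)$, every element of which has spinor norm lying in the square classes of sums of two rational squares (the $U$-part consists of Eichler transformations of trivial spinor norm, and a rotation of $U$ is a product of two reflections whose norms are sums of two squares), whereas the spinor norm is surjective on $\SO_4$ because $Q_W$ is universal. Since $3$ and $-1$ are not sums of two rational squares, the stabiliser does not surject onto $\SO_4/\Omega_4$, and $\Omega_4$ is not transitive on the nonzero isotropic vectors; a similar computation with $v^\perp\cong\langle 1,1,1\rangle$ disposes of $\lambda\neq 0$. The statement is correct over finite fields for $2m\geqslant 4$ and Witt index at least $1$, and over arbitrary fields for Witt index at least $2$; one of these hypotheses should be added, and your proof should be rewritten to invoke it explicitly rather than deferring the ``borderline case''.
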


From now on we require that $Q_W$ has Witt index at least $2$. The following technical 
results will be our main tool in the construction of certain orthogonal subgroups
of $\E_6(F)$.

\begin{theorem}
    \label{theorem:1_omega_maximal}
    Let $Q_W$ be of Witt index at least $2$. 
    The subgroup $\Omega_{2m}(F,Q_W)$ is maximal in $W\cn\Omega_{2m}(F,Q_W)$. 
\end{theorem}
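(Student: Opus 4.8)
The plan is to prove maximality directly: writing $G = W\cn\Omega_{2m}(F,Q_W)$ and $L = \Omega_{2m}(F,Q_W)$, I would show that every subgroup $H$ with $L \le H \le G$ equals $L$ or $G$. The structure to exploit is that $G$ is the split extension of the abelian normal subgroup $W$ (the unipotent radical, which the matrix form computed above identifies additively with the natural $L$-module) by the Levi complement $L$. So suppose $L < H$ and pick $h \in H \setminus L$. Factoring $h = w\ell$ uniquely with $w \in W$ and $\ell \in L$, the hypothesis $h \notin L$ gives $w \ne 0$, and since $\ell^{-1} \in L \le H$ we obtain $w = h\ell^{-1} \in H$. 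Hence $H \cap W$ is a nonzero subgroup of $(W,+)$; moreover $L \le H$ normalises both $H$ and the normal subgroup $W$, so $H \cap W$ is invariant under the natural action of $L$ on $W$. The theorem therefore reduces to the claim that the only nonzero $L$-invariant additive subgroup of $W$ is $W$ itself, for then $W \le H$ and $H = WL = G$.

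To prove that claim I would combine invariance with Lemma~\ref{lemma:1_omega_transitive}. If $v \in H\cap W$ is nonzero with $Q_W(v) = \lambda$, then $L$-invariance and the transitivity of $L$ on $O_\lambda$ force $O_\lambda \subseteq H\cap W$. The strategy is to reduce to the isotropic orbit $O_0$: once a single nonzero isotropic vector lies in $H\cap W$ we get $O_0 \subseteq H\cap W$, and the nonzero isotropic vectors additively generate $W$. For the latter, fix a hyperbolic pair $e,f$ in $W$ and split an arbitrary $w = \alpha e + \beta f + w'$ with $w' \perp e,f$; if $w'$ is anisotropic then $e + w' - Q_W(w')f$ is isotropic and nonzero, so that $w' = (e + w' - Q_W(w')f) - e + Q_W(w')f$, and hence $w = \alpha e + \beta f + w'$, is a sum of elements of $O_0$. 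Thus $H\cap W = W$.

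The remaining point — manufacturing a nonzero isotropic vector inside $H\cap W$ — is where the Witt-index-at-least-$2$ hypothesis is essential, and I expect it to be the crux. If the nonzero $v \in H\cap W$ is already isotropic there is nothing to do; otherwise $Q_W(v) = \lambda \ne 0$ and $O_\lambda \subseteq H\cap W$. Since the Witt index is at least $2$, there is a totally isotropic $2$-space $U$, and as $v^\perp$ is a hyperplane (the polar form is nondegenerate on the even-dimensional $W$) we have $U \cap v^\perp \ne 0$; any nonzero $u \in U \cap v^\perp$ is isotropic with $f_W(v,u) = 0$. Then $v' = v + u$ satisfies $Q_W(v') = Q_W(v) + Q_W(u) + f_W(v,u) = \lambda$ and $v' \ne v$, so $v' \in O_\lambda \subseteq H\cap W$ and hence $v - v' = -u \in H\cap W$ is the desired nonzero isotropic vector. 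This argument is uniform in the characteristic, which is why I would prefer it to the orthogonal-complement splitting $W = \langle v\rangle \perp v^\perp$ that breaks down when $Q_W(v) \ne 0$ but $f_W(v,v) = 0$ in characteristic $2$.
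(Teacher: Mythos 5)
Your proof is correct, and while it rests on the same two geometric facts as the paper's argument, it is organised quite differently. The paper identifies the coset space of $L=\Omega_{2m}(F,Q_W)$ in $G=W\cn\Omega_{2m}(F,Q_W)$ with the orbit of $v_2$, hence with $W$ carrying the affine action $u\mapsto uA+v$, and proves that this action is primitive by showing that every non-trivial $G$-congruence is universal, working with congruence classes and ``$\lambda$-relations''. You instead analyse an intermediate subgroup $H$ directly: the semidirect decomposition produces a non-zero element of $H\cap W$, normality of $W$ together with $L\leqslant H$ makes $H\cap W$ an $L$-invariant additive subgroup, and everything reduces to showing that $W$ has no proper non-zero $L$-invariant additive subgroup. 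Both routes then run on the same fuel, namely transitivity of $L$ on each $O_\lambda$ (Lemma~\ref{lemma:1_omega_transitive}) and the use of Witt index at least $2$ to produce a non-zero isotropic vector orthogonal to a given anisotropic one; but your execution of that crucial step is more robust than the paper's. The paper sets $u=\lambda_2 e_1-\lambda_1 f_1$ with $\lambda_i=f_W(v,\cdot)$, which degenerates to $u=0$ if $v$ happens to be orthogonal to the chosen totally isotropic plane, whereas your dimension count $U\cap v^{\perp}\neq 0$ always yields a usable $u$, uniformly in the characteristic. Your endgame --- that the non-zero isotropic vectors additively generate $W$, via $w'=\bigl(e+w'-Q_W(w')f\bigr)-e+Q_W(w')f$ --- replaces the paper's chain of $\xi$-relations and transitivity on every $O_\xi$, and is if anything more transparent. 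In short: same key lemmas, a cleaner group-theoretic scaffolding, and a small repair of a degenerate case that the paper's own choice of $u$ overlooks.
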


\begin{proof}
    Recall that $v_2 \in V$ is mapped under the action of 
    $\mathcal{G} = W\cn\Omega_{2m}(F,Q_W)$ to a vector of the
    form $(-Q_W(u) \mid u \mid 1)$, where $u$ is an element of $W$. 
    Since the stabiliser of $v_2$ in $\mathcal{G}$ is $\Omega_{2m}(F,Q_W)$, 
    we conclude that 
    the orbit of $v_2$ under the action of $\mathcal{G}$ is the following set:
    \begin{equation*}
	\Orb_{\mathcal{G}}(v_2) = \left\{ \left( -Q_W(u) \mid  u \mid 1
	 \right)\ \bigg|\ u \in W  \right\}.
    \end{equation*}
    Since the elements of this orbit are in one-to-one correspondence with the
    cosets of $\Omega_{2m}(F,Q_W)$ in $\mathcal{G}$, 
    it is enough to show the primitive
    action on $\Orb_{\mathcal{G}}(v_2)$. 
    
    Consider the action of $\mathcal{G}$ on 
    $\Orb_{\mathcal{G}}(v_2)$. A general element in $\mathcal{G}$ 
    acts on the elements of $\Orb_{\mathcal{G}}(v_2)$ in the following way:
    \begin{multline*}
	( -Q_W(u) \mid  u \mid 1 ) \ \mapsto \ 
	( -Q_W(u) - uABv^\T - Q(v) \mid
	    uA + v \mid 1) = \\
	    = (-Q_W(uA + v) \mid uA + v \mid 1).
    \end{multline*}
    Note that $uABv^\T = f_W( uA, v )$. 
    We see that this action is isomorphic to the action on $W$ defined by
	$u \mapsto uA + v$,
    where $u,v \in W$. In case when $A$ is the identity matrix, this map
    is a translation. On the other hand, taking $v = 0$, we obtain the
    action of $\Omega_{2m}(F,Q_W)$. 
    Denote the group generated by the described action on $W$ as
    $\mathrm{A\Omega}_{2m}(F,Q_W)$.
    
    The action of $G=\mathrm{A\Omega}_{2m}(F,Q_W)$ on 
    $W$ is primitive if any $G$-congruence on the elements of
    $W$ 
    is trivial, i.e. it is either equality or the universal relation. 
    Now suppose $\sim$ is a non-trivial $G$-congruence. In particular,
    $\sim$ is not the equality relation, so we may assume that there are two different
    elements $v_1,v_2 \in W$ such that $v_1 \sim v_2$. Translating both
    $v_1$ and $v_2$ by $-v_2$ and using the fact that $G$ preserves
    $\sim$, we obtain $v_1-v_2 \sim 0$. That is, $0 \sim v$ for some non-zero
    vector $v$. Denote by $O_{\lambda}$ the set
    \begin{equation*}
	O_{\lambda} = \left\{ u\in W\ \mid\ u\neq 0,\ Q_W(u) = \lambda \right\}
    \end{equation*}
    and let $\lambda = Q_W(v)$. Note that the group $\Omega_{2m}(F,Q_W)$
    acts transitively on $O_{\lambda}$ for all $\lambda \in F$, 
    so from $0 \sim v$ we obtain
    $0 \sim vA$ for all $A \in \Omega_{2m}(F,Q_W)$, and hence
    $0 \sim v_{\lambda}$ for all $v_{\lambda} \in O_{\lambda}$. 
    
    Suppose $v \in W$ is such
     that 
    $Q_W(v) = \lambda \neq 0$. Since $Q_W$ is of Witt index at least $2$,
    there exist two isotropic vectors $e_1,f_1 \in W$ spanning a totally isotropic
     subspace
    of dimension $2$. Let $\lambda_1 = f_W(v,e_1)$, $\lambda_2 = f_W(v,f_1)$ and
    denote \mbox{$u = \lambda_2 e_1 - \lambda_1 f_1$}. Then $Q_W(u) = 0$ and so 
    \begin{equation*}
	Q_W(v+u) = Q_W(v) + f_W(v,u) = Q(v) + \lambda_2 f_W(v,e_1) - \lambda_1 f_W(v,f_1)
	 = Q_W(v),
    \end{equation*}
    in other words, $f_W(v+u, v) = 0$. We readily see that $v+u \in O_{\lambda}$, and
    so $v \sim v+u$. Employing the translation by $-v$ we obtain $0 \sim u$. That is,
    $0$ is congruent to some non-zero isotropic vector. Of course, this means that 
    $0 \sim u_0$ for any $u_0 \in O_0$, since $\Omega_{2m}(F,Q_W)$ acts
    transitively on the set of isotropic vectors. 
    
    We say that two vectors $u,v \in W$ are in $\lambda$-relation 
    if $u \sim v$ and $Q(u-v) = \lambda$. From the previous discussion we know that 
    $0$ is in $0$-relation with some isotropic non-zero vector $u$. 
    Choose $w \in W$ such
    that $(u,w)$ is a hyperbolic pair. Then for all non-zero $\xi \in F$
    there is a chain $u \sim 0 \sim \xi w$. Indeed, since the action of 
    $\Omega_{2m}(F,Q_W)$ is transitive on the elements of $O_0$, then
    $0 \sim u_0$ for all $u_0 \in O_0$ and so by transitivity 
    $u_0 \sim u_0'$ for any two $u_0, u_0' \in O_0$. For any non-zero $\xi \in F$,
    $\xi w$ is a non-zero isotropic vector, hence the result. Since $u \sim \xi w$, 
    we can translate both sides by $-\xi w$ to get $u-\xi w \sim 0$, where
    $Q_W(u-\xi w) = Q_W(u) + \xi^2 Q_W(w) - \xi f_W(u, w) = -\xi$. Therefore, having
    the $0$-relation it is possible to obtain a $\xi$-relation for any $\xi \neq 0$
    in $F$. Finally, using the transitivity of $\Omega_{2m}(F,Q_W)$
    on every $O_{\xi}$, we obtain that $\sim$ is the universal congruence, and so
    the action of $\mathrm{A\Omega}_{2m}(F,Q_W)$ on $W$ is primitive.
\end{proof}

As we already know from Lemma \ref{lemma:1_stabiliser_omega}, the stabiliser in $\Omega_{2m+2}(F,Q)$
of an isotropic vector \mbox{$v_1\in V$} is a subgroup of shape $W\cn\Omega(F,Q_W)$.
We also note that every subgroup of $\Omega_{2m+2}(F,Q)$ containing 
$W\cn\Omega_{2m}(F,Q_W)$ as a subgroup, stabilises the \mbox{$1$-space} spanned by 
$v_1$.

\begin{theorem}
	\label{theorem:1_space_stab}
	Let $Q_W$ be of Witt index at least $2$. Any subgroup $H$ such that
	\begin{equation}
		W\cn\Omega_{2m}(F,Q_W) \leqslant H < \Omega_{2m+2}(F,Q),
	\end{equation}
	stabilises the $1$-space $\langle v_1 \rangle$. 
\end{theorem}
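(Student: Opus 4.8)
The plan is to reformulate the statement in terms of the permutation action of $G:=\Omega_{2m+2}(F,Q)$ on the set $\Omega$ of nonzero isotropic vectors, and to prove it by a congruence argument in the spirit of Theorem~\ref{theorem:1_omega_maximal}. Write $P:=W\cn\Omega_{2m}(F,Q_W)$, which by Lemma~\ref{lemma:1_stabiliser_omega} is $\mathrm{Stab}_G(v_1)$, and $P_1:=\mathrm{Stab}_G(\langle v_1\rangle)\geqslant P$; note $P_1=N_G(W)$, so the conclusion $H\leqslant P_1$ is precisely the assertion that $H$ normalises the translation subgroup $W$, whose fixed space in $V$ is exactly $\langle v_1\rangle$. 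Since $H\leqslant P_1$ is equivalent to $H\cdot v_1\subseteq\langle v_1\rangle$, I would assume for a contradiction that this fails and choose $w\in H\cdot v_1$ isotropic with $w\notin\langle v_1\rangle$. The $H$-orbit $B:=H\cdot v_1$ is a block of imprimitivity for $G$ acting on $\Omega$ (on which $G$ is transitive, by Lemma~\ref{lemma:1_omega_transitive} applied to $(V,Q)$), so the relation $\sim$ given by ``lying in the same block'' is $G$-invariant; and since $\mathrm{Stab}_H(v_1)=H\cap P=P=\mathrm{Stab}_G(v_1)$, proving $B=\Omega$ forces $H=G$ by orbit--stabiliser, contradicting $H<\Omega_{2m+2}(F,Q)$. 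Hence it suffices to show $B=\Omega$.

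Next I would record the $P$-orbits on $\Omega$ from the explicit matrix form of elements of $P$: the scalar $f(v_1,\cdot)$ is a $P$-invariant, and for each $b\neq 0$ the set $\mathcal{O}_b=\{x\in\Omega : f(v_1,x)=b\}$ is a single $P$-orbit (extending the computation $P\cdot v_2=\mathcal{O}_1$ made before Theorem~\ref{theorem:1_omega_maximal}); the remaining orbits are $\mathcal{O}_0'=\{av_1+y : a\in F,\ 0\neq y\in W,\ Q_W(y)=0\}$ and the singletons $\{cv_1\}$. As $P\leqslant H$ preserves $B$, the block $B$ is a union of $P$-orbits containing $v_1$. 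I would also note that $Q_W$ is universal (Witt index $\geqslant 1$ provides a hyperbolic pair, whose span already represents every scalar) and introduce the opposite translations $U^-\leqslant\mathrm{Stab}_G(v_2)$ acting by $v_1\mapsto v_1+u-Q_W(u)v_2$, $v_2\mapsto v_2$.

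The main case is $f(v_1,w)=b\neq 0$. Then $\mathcal{O}_b=P\cdot w\subseteq B$, so $v_1\sim bv_2$; applying $G$-invariance to this relation with the opposite translation indexed by $u$ gives $v_1\sim z_u$ for every $u\in W$, where $z_u:=v_1+u-Q_W(u)v_2$. A short characteristic-free computation shows that, when $Q_W(u)=q\neq 0$, the $U^+$-translation with parameter $u/q$ fixes $v_1$ and collapses $z_u$ to a nonzero multiple of $v_2$; since this translation lies in $P\leqslant\mathrm{Stab}_G(B)$, it preserves $\sim$, and universality of $Q_W$ then yields $\langle v_2\rangle\cap\Omega\subseteq B$. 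In particular $v_1\sim v_2$, and because $f(v_2,v_1)=1\neq 0$ the identical argument with $v_1,v_2$ interchanged gives $\langle v_1\rangle\cap\Omega\subseteq B$. Finally $P$-transitivity on each $\mathcal{O}_{b'}$ (using $b'v_2\in B$) and on $\mathcal{O}_0'$ (using $v_1+u\in B$ for isotropic $u$, from the relation above with $Q_W(u)=0$) sweeps up the rest, so $B=\Omega$.

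It remains to treat $f(v_1,w)=0$, which I expect to be the main obstacle. Here $w=\beta v_1+w'$ with $0\neq w'\in W$ isotropic; a $U^+$-translation places $w'$ in $B$, and Lemma~\ref{lemma:1_omega_transitive} (with $\lambda=0$) then puts every nonzero isotropic vector of $W$ into $B$. The difficulty is that $G$ preserves $f(v_1,\cdot)$ along its orbits, so one cannot transport a perpendicular relation to a non-perpendicular one; this is exactly the step where the properness of $H$ and the hypothesis that $Q_W$ has Witt index at least $2$ are genuinely needed. I would circumvent it by exploiting the element $h\in H$ with $hv_1=w$: since $h\notin P_1$, also $h^{-1}v_1=\beta'v_1+w'''\notin\langle v_1\rangle$, so $w'''\neq 0$; choosing an isotropic $w''\in W$ with $f_W(w''',w'')\neq 0$ (possible because isotropic vectors span $W$) gives $hw''\in B$ with $f(v_1,hw'')=f_W(w''',w'')\neq 0$, returning us to the main case. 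Thus the crux of the whole argument is this bridging of the perpendicular configuration to a non-perpendicular one; everything else is bookkeeping with the explicit unipotent actions.
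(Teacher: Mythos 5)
Your argument is correct, and in fact the paper offers no proof of this theorem at all --- it is stated immediately after Lemma~\ref{lemma:1_stabiliser_omega} with only the remark that subgroups containing $W\cn\Omega_{2m}(F,Q_W)$ stabilise $\langle v_1\rangle$ --- so there is nothing to compare against except the congruence argument used for Theorem~\ref{theorem:1_omega_maximal}, which you correctly take as your model but transplant to a different action (the $G$-set of non-zero isotropic vectors of $V$ rather than the affine action on $W$). Your reduction is sound: $B=Hv_1$ is a block because $\mathrm{Stab}_G(v_1)=P\leqslant H$, the orbit--stabiliser bijection turns $B=\Omega$ into $H=G$ even over infinite fields, and your list of $P$-orbits on isotropic vectors (the levels $f(v_1,\cdot)=b\neq 0$, the set $\mathcal{O}_0'$, and the singletons on $\langle v_1\rangle$) checks out from the explicit matrix form of $\hat A$. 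The collapse of $z_u$ to $-Q_W(u)v_2$ is indeed characteristic-free, since the coefficient of $v_1$ is $1-f_W(u,u)/q+qQ_W(u/q)=1-2+1=0$, and the bridging device in the perpendicular case --- using $h^{-1}v_1$ and the fact that isotropic vectors span $W$ to manufacture an element of $B$ not perpendicular to $v_1$ --- is the genuinely new ingredient and it works. Two small points to tidy up: (i) when you write $h^{-1}v_1=\beta'v_1+w'''$ you are tacitly assuming $f(v_1,h^{-1}v_1)=0$; you should say explicitly that otherwise $h^{-1}v_1\in B$ already returns you to the main case. (ii) Your closing claim that the Witt index $\geqslant 2$ hypothesis is ``genuinely needed'' at the bridging step is not borne out by your own argument: that step only needs isotropic vectors to span $W$ and Lemma~\ref{lemma:1_omega_transitive} with $\lambda=0$, both of which the paper attributes to Witt index $\geqslant 1$ (though Lemma~\ref{lemma:1_omega_transitive} silently requires $2m\geqslant 4$, which Witt index $\geqslant 2$ guarantees). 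This does not affect correctness, since you are proving the theorem under its stated hypothesis, but you have not actually located where index $2$ rather than $1$ is indispensable.
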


\section{Octonions}

In this section we discuss the definition and some properties of octonion algebras. 
Our usual point of reference will be \cite{SprVeld}. 

\begin{definition}
	Let $F$ be any field. An octonion algebra $\OO = \OO_F$ is an $8$-dimensional composition
	algebra, i.e. it is unital and admits a norm $\NN : \OO \rightarrow F$
	that is a quadratic form
	such that the polar form of\/ $\NN$ is non-degenerate and $\NN(xy) = \NN(x) \NN(y)$ for 
	all $x,y \in \OO$.  
\end{definition}

The multiplicative identity in $\OO$ is denoted as $1_{\OO}$, and throughout the paper we sometimes omit the subscript. Denote the polar form of $\NN$ by $\langle \cdot , \cdot \rangle$ and define the 
\textit{trace} of an octonion via
\begin{equation}
	\Tr(x) = \langle x, 1_{\OO} \rangle,
\end{equation}
where $1_{\OO}$ is the multiplicative identity in $\OO$. Octonion algebras are quadratic: an 
arbitrary element $x \in \OO$ satisfies the equation
\begin{equation}
	x^2 - \Tr(x)\cdot x + \NN(x)\cdot 1_{\OO} = 0.
\end{equation} 
\textit{Conjugation} in $\OO$ is the
mapping $\ovphantom: \OO \rightarrow \OO$ defined by
\begin{equation}
	\ovx = \Tr(x)\cdot 1_{\OO} - x.
\end{equation}
We call $\ovx$ the \textit{conjugate} of $x$. The following lemma summarises the properties of 
$\OO$ related to octonion conjugation. 

\begin{lemma}
	For all $x,y \in \OO$ the following identities hold:
	\begin{enumerate}[(i)]
		\item $x \ovx = \ovx x = \NN(x)\cdot 1_{\OO}$,
		\item $\bar{xy} = \ovy \ovx$, 
		\item $\bar{\ovx} = x$, 
		\item $\bar{x+y} = \ovx + \ovy$,
		\item $\NN(x) = \NN(\ovx)$,
		\item $\langle x,y \rangle = \langle \ovx, \ovy \rangle$. 
	\end{enumerate}
\end{lemma}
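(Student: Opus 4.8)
The plan is to dispatch (i), (iii) and (iv) straight from the definitions, obtain (v) and (vi) by expanding the norm, and reserve the real work for the anti-automorphism identity (ii), which is where the non-associativity of $\OO$ bites. Before anything else I would record the normalisation $\NN(1_{\OO}) = 1$ and $\Tr(1_{\OO}) = 2$: applying the quadratic identity to $x = 1_{\OO}$ gives $(1 - \Tr(1_{\OO}) + \NN(1_{\OO}))\,1_{\OO} = 0$, while $\Tr(1_{\OO}) = \langle 1_{\OO}, 1_{\OO}\rangle = 2\NN(1_{\OO})$ from the definition of the polar form; solving these yields $\NN(1_{\OO}) = 1$ and $\Tr(1_{\OO}) = 2$.

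For (i), substituting $\ovx = \Tr(x)\,1_{\OO} - x$ and using the quadratic identity in the form $x^2 = \Tr(x)\,x - \NN(x)\,1_{\OO}$ gives $x\ovx = \Tr(x)\,x - x^2 = \NN(x)\,1_{\OO}$, and symmetrically $\ovx x = \NN(x)\,1_{\OO}$. Since $\Tr$ is linear in its argument, (iv) is immediate, and (iii) follows once one checks $\Tr(\ovx) = \Tr(x)$, which is a one-line computation from $\langle 1_{\OO}, 1_{\OO}\rangle = 2$ and $\langle x, 1_{\OO}\rangle = \Tr(x)$. For (v) and (vi) I would expand by bilinearity using $\NN(a+b) = \NN(a) + \NN(b) + \langle a, b\rangle$. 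Writing $\ovx = \Tr(x)\,1_{\OO} - x$, the cross terms cancel the square terms exactly (using $\NN(1_{\OO}) = 1$, $\langle 1_{\OO}, x\rangle = \Tr(x)$ and $\langle 1_{\OO}, 1_{\OO}\rangle = 2$), leaving $\NN(\ovx) = \NN(x)$ and $\langle \ovx, \ovy\rangle = \langle x, y\rangle$; both computations are characteristic-free.

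The crux is (ii). Because $\OO$ is not associative, the tempting cancellation $(\ovy\ovx)(xy) = \ovy(\ovx x)y = \NN(x)\NN(y)\,1_{\OO}$ is not legitimate, so I would argue through the bilinear form and its non-degeneracy instead. The key input is the linearisation of the multiplicativity $\NN(xy) = \NN(x)\NN(y)$: replacing $x$ by $x+z$ gives $\langle xy, zy\rangle = \langle x, z\rangle\,\NN(y)$, and a further linearisation in $y$ yields $\langle xy, zw\rangle + \langle xw, zy\rangle = \langle x, z\rangle\langle y, w\rangle$. Specialising one slot to $1_{\OO}$ and rewriting $\ovx y = \Tr(x)\,y - xy$ then produces the two adjoint relations $\langle ab, c\rangle = \langle b, \bar a\,c\rangle$ and $\langle ab, c\rangle = \langle a, c\,\bar b\rangle$, expressing that left (respectively right) multiplication is adjoint to left (respectively right) multiplication by the conjugate.

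With these in hand the identity closes for every $w \in \OO$: by (vi) and the first adjoint relation $\langle \bar{xy}, w\rangle = \langle xy, \bar w\rangle = \langle y, \bar x\,\bar w\rangle$, while by the first adjoint relation and (iii) $\langle \bar y\,\bar x, w\rangle = \langle \bar x, \bar{\bar y}\,w\rangle = \langle \bar x, yw\rangle$; finally symmetry of the form, the second adjoint relation and (iii) give $\langle y, \bar x\,\bar w\rangle = \langle \bar x\,\bar w, y\rangle = \langle \bar x, y\,\bar{\bar w}\rangle = \langle \bar x, yw\rangle$. Hence $\langle \bar{xy} - \bar y\,\bar x, w\rangle = 0$ for all $w$, and non-degeneracy of the polar form forces $\bar{xy} = \ovy\,\ovx$. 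I expect (ii) to be the only real obstacle: the bookkeeping in the two successive linearisations, and in tracking which slot each conjugate lands in, is the delicate part, whereas (i) and (iii)--(vi) reduce directly to the definitions and hold in every characteristic.
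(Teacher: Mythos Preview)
Your proof is correct. The paper, however, states this lemma without proof: it is presented as a standard summary of properties of octonion conjugation, with the reference \cite{SprVeld} serving as the background source. So there is no proof in the paper to compare against.

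For what it is worth, your argument is exactly the standard one found in that reference. Parts (i), (iii)--(vi) are indeed immediate from the definitions once $\NN(1_{\OO})=1$ and $\Tr(1_{\OO})=2$ are in hand, and your treatment of (ii) via the two successive linearisations of $\NN(xy)=\NN(x)\NN(y)$ to obtain the adjoint identities $\langle ab,c\rangle=\langle b,\ovA c\rangle$ and $\langle ab,c\rangle=\langle a,c\,\ovb\rangle$, followed by non-degeneracy, is the canonical route and is characteristic-free as you note.
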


Furthermore, we have the following important properties. 

\begin{lemma}
	For all $x,y,z \in \OO$ the following identities hold: 
	\begin{enumerate}[(i)]
		\item $x(\ovx y) = \NN(x) y$,
		\item $(x\ovy) y = \NN(y) x$,
		\item $x (\ovy z) + y (\ovx z) = \langle x,y \rangle \cdot z$,
		\item $(x \ovy) z + (x \ovz) y = \langle y,z \rangle \cdot x$. 
	\end{enumerate}	 
\end{lemma}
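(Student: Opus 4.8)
The obstacle that governs the whole argument is that $\OO$ is not associative, so the tempting one-line simplification $x(\ovx y) = (x\ovx)y = \NN(x)y$ is \emph{not} available. The plan is to sidestep associativity altogether and work through the polar form $\langle\cdot,\cdot\rangle$, exploiting its non-degeneracy to reduce each identity to the multiplicativity $\NN(xy) = \NN(x)\NN(y)$. First I would linearise that multiplicativity. Polarising in the left factor gives $\langle xy, x'y\rangle = \langle x,x'\rangle\NN(y)$, and polarising once more in the right factor yields the fully bilinear identity
\begin{equation*}
\langle xy, x'y'\rangle + \langle xy', x'y\rangle = \langle x,x'\rangle\langle y,y'\rangle.
\end{equation*}
None of these steps introduces a factor of $2$, so all of them are valid in every characteristic.

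From the bilinear identity I would extract two \emph{adjoint relations}. Setting $x' = 1_{\OO}$ and using $\ovx = \Tr(x)\,1_{\OO} - x$ together with $\Tr(x) = \langle x,1_{\OO}\rangle$ produces $\langle xy, z\rangle = \langle y, \ovx z\rangle$; setting instead $y' = 1_{\OO}$ produces $\langle xy, z\rangle = \langle x, z\ovy\rangle$. Informally these say that left multiplication by $x$ is adjoint to left multiplication by $\ovx$, and right multiplication by $y$ is adjoint to right multiplication by $\ovy$, with respect to the polar form. To prove (i) I would then show that $x(\ovx y)$ and $\NN(x)y$ have the same inner product with every $z\in\OO$, so that non-degeneracy forces them equal: by the first adjoint relation $\langle x(\ovx y), z\rangle = \langle \ovx y, \ovx z\rangle$, and by the polarised norm identity $\langle \ovx y, \ovx z\rangle = \NN(\ovx)\langle y,z\rangle = \NN(x)\langle y,z\rangle$ (using $\NN(\ovx) = \NN(x)$), which is exactly $\langle \NN(x)y, z\rangle$. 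Identity (ii) is the mirror image, obtained from the second adjoint relation and the polarisation in the first factor: $\langle (x\ovy)y, z\rangle = \langle x\ovy, z\ovy\rangle = \NN(\ovy)\langle x,z\rangle = \NN(y)\langle x,z\rangle$.

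Finally, (iii) and (iv) fall out by polarising (i) and (ii). Replacing $x$ by $x+y$ in (i), expanding the left-hand side by distributivity, and cancelling the two diagonal terms $x(\ovx z) = \NN(x)z$ and $y(\ovy z) = \NN(y)z$ leaves $x(\ovy z) + y(\ovx z) = \langle x,y\rangle z$; replacing $y$ by $y+z$ in (ii) and cancelling in the same way gives (iv). It matters here that (i) and (ii) are established directly rather than by putting $y = x$ in (iii): that shortcut only recovers $2\NN(x)y$ and would collapse in characteristic $2$, whereas the non-degeneracy argument above has no such defect. The step I expect to demand the most care is not any single computation but the discipline of never associating a triple product — every manipulation must pass through the polar form, and the only genuine input beyond bookkeeping is the bilinear composition identity and the non-degeneracy of $\langle\cdot,\cdot\rangle$.
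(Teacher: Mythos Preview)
The paper states this lemma without proof; it is quoted as background from the cited reference \cite{SprVeld} (these are Lemmas~1.3.2 and~1.3.3 there). Your argument is correct and is in fact the standard one: polarise the composition law $\NN(xy)=\NN(x)\NN(y)$ to obtain the bilinear identity $\langle xy,x'y'\rangle+\langle xy',x'y\rangle=\langle x,x'\rangle\langle y,y'\rangle$, specialise to extract the adjoint relations $\langle xy,z\rangle=\langle y,\ovx z\rangle$ and $\langle xy,z\rangle=\langle x,z\ovy\rangle$, and then use non-degeneracy of the polar form to get (i) and (ii), with (iii) and (iv) following by a further polarisation. Your care about characteristic~$2$ is well placed and the argument is clean throughout.
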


Octonion algebras are alternative, which means that the octonion multiplication
satisfies the following laws.

\begin{lemma}
	For all $x,y \in \OO$ the following are true:
	\begin{enumerate}[(i)]
		\item $(xx)y = x(xy)$, 
		\item $(yx)x = y(xx)$,
		\item $(xy)x = x(yx)$. 
	\end{enumerate}
\end{lemma}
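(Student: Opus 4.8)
The plan is to derive all three laws from the quadratic identity $x^2 = \Tr(x)\,x - \NN(x)\,1_{\OO}$ together with the composition identities $x(\ovx y) = \NN(x)\,y$ and $(x\ovy)y = \NN(y)\,x$ recorded in the preceding lemma, using only the substitution $\ovx = \Tr(x)\,1_{\OO} - x$. The left and right alternative laws (i) and (ii) will each drop out of a two-line computation, and the flexible law (iii) will then be extracted from them by linearisation.

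For (i), I would substitute $\ovx = \Tr(x)\,1_{\OO} - x$ into $x(\ovx y) = \NN(x)\,y$. Expanding the left-hand side gives $\Tr(x)\,(xy) - x(xy)$, so that $x(xy) = \Tr(x)\,(xy) - \NN(x)\,y$. On the other side, multiplying the quadratic identity on the right by $y$ yields $(xx)y = \Tr(x)\,(xy) - \NN(x)\,y$, and comparing the two expressions gives $(xx)y = x(xy)$. For (ii) I would run the mirror-image argument: using item (ii) of the preceding lemma in the form $(y\ovx)x = \NN(x)\,y$ and the same substitution gives $(yx)x = \Tr(x)\,(yx) - \NN(x)\,y$, while multiplying the quadratic identity on the left by $y$ gives $y(xx) = \Tr(x)\,(yx) - \NN(x)\,y$; hence $(yx)x = y(xx)$.

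For the flexible law (iii) I would linearise the left alternative law (i): replacing $x$ by $x+z$ in $(xx)y = x(xy)$, expanding, and cancelling the instances of (i) for $x$ and for $z$ leaves the bilinear identity
\begin{equation*}
(xz)y + (zx)y = x(zy) + z(xy).
\end{equation*}
Since this expansion merely collects the cross terms and involves no division, it is valid in every characteristic. Specialising $z = y$ turns it into $(xy)y + (yx)y = x(yy) + y(xy)$; now the right alternative law (ii), in the form $(xy)y = x(yy)$, cancels one term from each side and leaves $(yx)y = y(xy)$. Interchanging the names of $x$ and $y$ then gives precisely $(xy)x = x(yx)$.

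The first two parts are essentially mechanical, so the only genuine content is in (iii). The point to get right there is that linearising (i) and then specialising $z=y$ is exactly what reduces the flexible law to the two alternative laws, with the cancellation step drawing on (ii) rather than on anything new. The one thing I would check carefully is that no factor of $2$ sneaks in, so that the argument remains valid in characteristic $2$; since every step is the cancellation of whole terms rather than a halving, this is automatic.
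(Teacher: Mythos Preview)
Your proof is correct. The paper itself does not supply a proof of this lemma: it simply records the alternative laws as standard facts about composition algebras, with \cite{SprVeld} as the background reference. Your derivation is the usual one and fills in exactly what the paper leaves out. The arguments for (i) and (ii) are standard and watertight, and your linearisation step for (iii) is clean; as you note, no factor of $2$ appears, so the proof goes through uniformly in all characteristics.
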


Finally, we must emphasise that octonionic multiplication is not 
associative and not commutative.
The following is one part of Theorem 1.6.2 in \cite{SprVeld}. 

\begin{theorem}
	An(y) $8$-dimensional composition algebra is neither associative nor commutative.
\end{theorem}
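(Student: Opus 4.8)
I would prove the two assertions separately, in each case reducing the question to the Cayley--Dickson doubling by which composition algebras are assembled from smaller ones. The plan is to produce inside $\OO$ a chain of nondegenerate composition subalgebras $K \subset D \subset \OO$ with $\dim_F K = 2$ and $\dim_F D = 4$, each obtained from its predecessor by doubling. Non-commutativity will already be visible in $D$, and non-associativity will emerge from the final doubling step that enlarges $D$ inside $\OO$.

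To begin the chain I would choose $u \in \OO$ so that $K = \langle 1_\OO, u \rangle$ is a nondegenerate two-dimensional subalgebra with $u \neq \bar{u}$; such a $u$ exists because the polar form is nondegenerate (concretely one may arrange $\Tr(u) \neq 0$, which is the natural choice in characteristic $2$, where $F\cdot 1_\OO$ is itself totally isotropic and so is unusable as a starting point). The general doubling step is as follows. If $D \subsetneq \OO$ is a proper nondegenerate composition subalgebra, then the polar form is nondegenerate on $D^\perp$, so $D^\perp$ contains an anisotropic vector $a$; since $1_\OO \in D$ we have $\Tr(a) = \langle a, 1_\OO \rangle = 0$, whence $\bar{a} = -a$. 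One then checks that $D \oplus Da$ is a composition subalgebra of dimension $2\dim_F D$, whose multiplication is controlled by the rules
\[
x(ya) = (yx)a, \qquad (xa)y = (x\bar{y})a, \qquad a^2 = -\NN(a)\,1_\OO \qquad (x,y \in D),
\]
together with a companion rule for the products $(xa)(ya)$. Deriving these relations from the bilinear identities of the second lemma above is the technical heart of the argument.

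Granting the doubling, both conclusions reduce to short computations. Applying the construction to $K$ yields a four-dimensional $D = K \oplus Ka$; for the element $u \in K$ with $u \neq \bar{u}$ the rules give $ua = ua$ while $au = \bar{u}a$. Since right multiplication by $a$ is injective (as $\NN(a) \neq 0$; recall $(x\bar{y})y = \NN(y)x$), we get $au = \bar{u}a \neq ua$, so $D$, and hence $\OO$, is non-commutative. Doubling once more produces a subalgebra $D \oplus Db \subseteq \OO$, and for $x,y \in D$ the first rule gives $x(yb) = (yx)b$, so the associator is
\[
(xy)b - x(yb) = (xy - yx)b.
\]
Taking $x,y \in D$ that do not commute --- which exist by the previous step --- makes this nonzero, so $\OO$ is not associative.

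The main obstacle is the doubling construction itself: showing that $D \oplus Da$ is closed under multiplication, that the stated product rules hold, and that the restriction of $\NN$ is still multiplicative with nondegenerate polar form, all extracted from the linearised composition identities. The remaining points are routine but require uniform care across all characteristics, namely the existence of an anisotropic vector in each complement $D^\perp$ (guaranteed by nondegeneracy of the polar form, since a nonzero space carrying a nondegenerate quadratic form must represent a nonzero value: otherwise its polar form would vanish identically) and the need to start the chain at a nondegenerate two-dimensional $K$ rather than at the degenerate $F\cdot 1_\OO$ when $\operatorname{char} F = 2$.
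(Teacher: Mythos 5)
The paper offers no proof of this statement at all: it is imported verbatim as ``one part of Theorem 1.6.2 in \cite{SprVeld}''. Your proposal therefore cannot be compared with an internal argument; what it does is reconstruct, in outline, exactly the proof that Springer and Veldkamp give, namely the Cayley--Dickson doubling chain $K\subset D\subset \OO$ with the product rules $x(ya)=(yx)a$, $(xa)y=(x\bar{y})a$, $a^2=-\NN(a)1_{\OO}$ for $a\in D^{\perp}$ anisotropic. Your two concluding computations are correct: $au=\bar{u}a\neq ua$ kills commutativity once $u\neq\bar{u}$ and right multiplication by $a$ is injective, and the associator identity $(xy)b-x(yb)=(xy-yx)b$ kills associativity once $D$ is non-commutative. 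So the route is the standard one and the logic of the reduction is sound.

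The proposal is nevertheless not a proof, and you say so yourself: the entire content of the theorem is concentrated in the doubling lemma (that $D\oplus Da$ is closed under multiplication and obeys the stated rules, derived from the linearised identities $x(\bar{y}z)+y(\bar{x}z)=\langle x,y\rangle z$ and its companions), and this is asserted rather than established. Without it, nothing has been proved. A second, smaller gap: you need $K=\langle 1_{\OO},u\rangle$ to be \emph{nondegenerate} before the first doubling step, and ``the polar form on $\OO$ is nondegenerate'' does not by itself deliver this. The Gram determinant of $\{1_{\OO},u\}$ is $4\NN(u)-\Tr(u)^2$, which can vanish for a badly chosen $u$ even in characteristic not $2$; one should say explicitly that in characteristic $2$ one takes $\Tr(u)\neq 0$ (giving determinant $-\Tr(u)^2\neq 0$), and in characteristic not $2$ one takes $u\in 1_{\OO}^{\perp}$ anisotropic (giving determinant $4\NN(u)\neq 0$). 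With the doubling lemma supplied and this choice made precise, your argument would be a complete and self-contained proof, which is more than the paper itself provides.
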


While the last theorem suggests that the calculations involving the elements of $\OO$ can be 
quite tedious and uncomfortable, in some cases the following lemmata can make our life easier. 

\begin{lemma}
	If $x,y,z \in \OO$, then $\Tr(xy) = \Tr(yx)$ and $\Tr(x(yz)) = \Tr((xy) z)$. 
\end{lemma}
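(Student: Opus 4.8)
The plan is to reduce both identities to statements about multiplication and conjugation alone, bypassing the bilinear form entirely. The starting observation is that the defining formula $\ovx = \Tr(x)\cdot 1_{\OO} - x$ rearranges to $\Tr(w)\cdot 1_{\OO} = w + \ovw$. Since $1_{\OO} \neq 0$ and scalars act freely on it, an equality $\Tr(a)\cdot 1_{\OO} = \Tr(b)\cdot 1_{\OO}$ forces $\Tr(a) = \Tr(b)$; so in each case it suffices to compare the elements $w + \ovw$ of $\OO$. Applying $\overline{uv} = \ovv\,\ovu$ I get $\Tr(xy)\cdot 1_{\OO} = xy + \ovy\,\ovx$ and $\Tr(yx)\cdot 1_{\OO} = yx + \ovx\,\ovy$.

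For the first identity I would simply expand using $\ovx = \Tr(x)\cdot 1_{\OO} - x$ and $\ovy = \Tr(y)\cdot 1_{\OO} - y$. Both $xy + \ovy\,\ovx$ and $yx + \ovx\,\ovy$ collapse to the same expression $xy + yx - \Tr(x)\,y - \Tr(y)\,x + \Tr(x)\Tr(y)\cdot 1_{\OO}$: the cross terms $xy$ and $yx$ each occur exactly once, and the remaining terms are manifestly symmetric in $x$ and $y$. Hence $\Tr(xy) = \Tr(yx)$. This step is routine.

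The second identity is the substantive one. Writing $\Tr(x(yz))\cdot 1_{\OO} = x(yz) + \overline{x(yz)} = x(yz) + (\ovz\,\ovy)\ovx$ and $\Tr((xy)z)\cdot 1_{\OO} = (xy)z + \ovz(\ovy\,\ovx)$, and introducing the associator $[a,b,c] = (ab)c - a(bc)$, the difference of the two right-hand sides is exactly $[\ovz,\ovy,\ovx] - [x,y,z]$. So everything reduces to proving the purely multiplicative identity $[\ovz,\ovy,\ovx] = [x,y,z]$. Here I would invoke that $\OO$ is alternative: the three alternative laws give $[a,a,b] = [a,b,b] = [a,b,a] = 0$, and since the associator is trilinear, polarising each of these shows that $[\,\cdot,\cdot,\cdot\,]$ is alternating, i.e. it changes sign under any transposition of its arguments. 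I would also record that $[a,b,c] = 0$ whenever one argument equals $1_{\OO}$. Expanding $[\ovz,\ovy,\ovx]$ via $\ova = \Tr(a)\cdot 1_{\OO} - a$ and using trilinearity, every term carrying a factor $1_{\OO}$ drops out, leaving $[-z,-y,-x] = -[z,y,x]$; a single transposition then gives $[z,y,x] = -[x,y,z]$, so $[\ovz,\ovy,\ovx] = [x,y,z]$, as required.

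The main obstacle is the alternating property of the associator: this is the one place where the alternative laws must be used in full (through polarisation), and it is what genuinely encodes that the failure of associativity is as controlled as possible. Everything else is bookkeeping. Once this property is in hand, together with the vanishing of the associator on $1_{\OO}$, the conjugation formula $\Tr(w)\cdot 1_{\OO} = w + \ovw$ finishes both identities uniformly, with no appeal to the polar form and no restriction on the characteristic of $F$.
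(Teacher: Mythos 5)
Your argument is correct, and it is worth noting that the paper itself states this lemma without proof (deferring, like most of the octonion background, to Springer--Veldkamp), so there is no in-paper argument to measure it against. Your route is the ``internal'' one: reduce everything to the element identity $\Tr(w)\cdot 1_{\OO} = w + \ovw$, observe that the first claim is a symmetric expansion, and that the second claim is exactly the statement $[\ovz,\ovy,\ovx] = [x,y,z]$ for the associator, which follows from the alternating property (obtained by polarising the alternative laws, valid in every characteristic since only trilinearity is used) together with the vanishing of the associator when one slot is $1_{\OO}$. The sign bookkeeping checks out: the difference of the two conjugate-expanded expressions is $[\ovz,\ovy,\ovx]-[x,y,z]$, the $1_{\OO}$-terms drop, and $[-z,-y,-x]=-[z,y,x]=[x,y,z]$ by the odd transposition $(1\,3)$. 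The more usual proof in the literature goes through the polar form instead: linearising $\NN(xy)=\NN(x)\NN(y)$ yields $\langle xy,z\rangle = \langle x, z\ovy\rangle = \langle y,\ovx z\rangle$, whence $\Tr((xy)z)=\langle xy,\ovz\rangle = \langle x,\ovz\,\ovy\rangle = \langle x,\overline{yz}\rangle = \Tr(x(yz))$. That route is shorter once the linearised composition identities are available, but yours has the merit of using only the conjugation and alternativity facts the paper has already recorded, and of making explicit where non-associativity is being controlled. Both are characteristic-free.
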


Note that although we have $3$-associativity for the trace in general, we cannot
derive generalised associativity in this case. Although there is no associativity in
 general, in some cases the so-called
\textit{Moufang laws} can help us with bracketing.

\begin{lemma}
	For all $x,y,z\in \OO$, the following identities hold:
	\begin{equation}
		\begin{array}{r@{\;}c@{\;}l}
			x(yz)x & = & (xy)(zx), \\
			x(yzy) & = & ((xy)z)y, \\
			(xyx)z & = & x(y(xz)).
		\end{array}
	\end{equation}
\end{lemma}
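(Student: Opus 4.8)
The plan is to derive all three Moufang laws from the three alternative laws of the previous lemma by linearisation, organising everything around the associator $(a,b,c) := (ab)c - a(bc)$, so that each Moufang law becomes the assertion that a certain combination of associators vanishes.

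First I would show that $(a,b,c)$ is an alternating function of its three arguments. The three alternative laws say exactly that $(a,b,c)$ vanishes whenever two prescribed slots coincide: $(x,x,y)=0$, $(y,x,x)=0$ and $(x,y,x)=0$. Replacing $x$ by $x+x'$ in each and cancelling the terms that already vanish gives $(x,x',y)=-(x',x,y)$, $(y,x,x')=-(y,x',x)$ and $(x,y,x')=-(x',y,x)$, so the associator changes sign under each transposition and hence under every permutation; in particular it is invariant under cyclic permutations. The flexible law also makes the triple products $xyx=(xy)x=x(yx)$ and $yzy$ occurring in the statement unambiguous.

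Next I would recast each Moufang law. Inserting and deleting the common term $(xy)(zy)$ and using flexibility, the right-hand identity $x(yzy)=((xy)z)y$ is seen to be equivalent to
\[ (x,y,zy) + (xy,z,y) = 0, \]
and, by the mirror computation, the left-hand identity $(xyx)z=x(y(xz))$ is equivalent to
\[ (xy,x,z) + (x,y,xz) = 0; \]
the middle identity reduces similarly to a single product-slot relation expressing $(xy,z,x)$ through single-letter associators. The essential point --- and the main obstacle --- is that these residual identities involve an associator one of whose slots is itself a product, and such identities are \emph{not} consequences of skew-symmetry together with the purely formal Teichm\"uller identity
\[ (ab,c,d) - (a,bc,d) + (a,b,cd) = a(b,c,d) + (a,b,c)d \]
alone: substituting the relevant letters makes the latter collapse to a tautology. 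What is really needed are the product-slot associator identities of an alternative algebra, which I would extract by reading the linearised alternative laws as relations among the left and right multiplication operators $L_a,R_a$ --- for instance $L_{ab}+L_{ba}=L_aL_b+L_bL_a$, the mixed relation $L_{ac}+R_cL_a=L_a(L_c+R_c)$ coming from $(a,b,c)=-(a,c,b)$, and their right-handed mirrors --- and then combining them. This operator bookkeeping is the technical heart of the argument.

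Finally, feeding these product-slot identities into the reductions of the second step collapses each two-associator sum to zero, yielding the left and right laws, after which the middle law follows either identically or by substituting the two outer laws into one another. The one recurring hazard is characteristic $2$: since the argument leans on repeated linearisation, I would keep every intermediate step as an honest polynomial identity rather than as half of a linearised relation, so that nothing divides by $2$. Alternatively one could stay inside the composition algebra and verify the laws slotwise against the non-degenerate polar form using the bilinear identities $x(\bar y z)+y(\bar x z)=\langle x,y\rangle z$ and $(x\bar y)z+(x\bar z)y=\langle y,z\rangle x$, but the associator route keeps the proof internal to the alternative structure just established, and is the one I would pursue.
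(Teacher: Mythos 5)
The paper offers no proof of this lemma at all: the Moufang laws are quoted as standard facts about octonion (indeed, alternative) algebras, with Springer--Veldkamp as the standing reference, so there is nothing in the text to compare your argument against line by line. Judged on its own terms, your sketch is correct exactly up to the point you yourself label ``the technical heart'', and there it stops. The alternating property of the associator and the reduction of each Moufang law to the vanishing of a two-term sum such as $(x,y,zy)+(xy,z,y)$ are both right, as is your observation that skew-symmetry plus the Teichm\"uller identity merely shuffles such sums into one another. But the needed product-slot identities are then only named, not derived. Worse, the method you propose for deriving them --- combining the fully linearised operator relations $L_aL_b+L_bL_a=L_{ab+ba}$, $R_aR_b+R_bR_a=R_{ab+ba}$ and their flexible companion --- runs straight into the obstruction you flag: expanding $R_xR_yR_x$ by repeated use of these relations terminates in $2R_xR_yR_x=2R_{xyx}$. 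The factor of $2$ here is not an artefact of careless linearisation that ``honest polynomial identities'' will avoid; in characteristic $2$ the fully linearised relations are strictly weaker than the alternative laws themselves, and the characteristic-free derivation of the Moufang identities in an alternative ring requires genuinely further associator identities (of the shape $(x,y,z)x=(x,y,zx)$, or the Bruck--Kleinfeld four-variable function) that you neither state nor prove. Since this paper insists on arbitrary fields, characteristic $2$ cannot be set aside.

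The route you mention and discard in your final sentence is the one that actually closes the gap cleanly, and it is the proof in the paper's own reference: because the polar form of $\NN$ is non-degenerate, it suffices to verify $\langle\mathrm{LHS},w\rangle=\langle\mathrm{RHS},w\rangle$ for all $w\in\OO$, and this follows uniformly in every characteristic from the identities $x(\ovy z)+y(\ovx z)=\langle x,y\rangle z$ and $(x\ovy)z+(x\ovz)y=\langle y,z\rangle x$ recorded two lemmas earlier, together with $\ovx=\Tr(x)\cdot 1_{\OO}-x$ and the trace identities $\Tr(xy)=\Tr(yx)$, $\Tr(x(yz))=\Tr((xy)z)$. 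If you prefer to stay inside the purely alternative framework, you must actually establish a product-slot identity such as $(xy,z,y)=-(x,y,zy)$ rather than assert that bookkeeping will produce it; as written, the proposal is a correct reduction followed by an unproved claim whose intended proof method fails in the one characteristic the paper most needs.
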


An octonion $x \in \OO$ is \textit{invertible} if and only if $\NN(x) \neq 0$, in which case
\begin{equation}
x^{-1} = \NN(x)^{-1} \ovx.
\end{equation}
 If there exists an isotropic octonion in $\OO$, i.e. a non-zero
element $x \in \OO$ such that $\NN(x) = 0$, then we call $\OO$ 
a \textit{split octonion algebra}. One part of Theorem 1.8.1 in \cite{SprVeld} is the following result.

\begin{theorem}
	\label{theorem:1_unique_split_algebra}
	Over any given field $F$ there is a unique, up to isomorphism, split composition algebra.
\end{theorem}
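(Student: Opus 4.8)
The plan is to reduce the claim to two facts: that being split forces the norm form $\NN$ to be the hyperbolic quadratic form of the relevant rank, and that a composition algebra is determined up to isomorphism by $\NN$. I fix the dimension throughout (the case at hand being $\dim_F \OO = 8$); over a field a split composition algebra exists in each of the dimensions $2,4,8$, and the assertion is uniqueness within a fixed dimension.

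First I would show that splitness forces $\NN$ to be hyperbolic. Unwinding the Cayley--Dickson doubling construction, from which every composition algebra arises, one sees that doubling a composition algebra by a scalar $\lambda$ multiplies its norm by the binary factor $\langle 1, -\lambda\rangle$; hence the norm of an algebra of dimension $2^m$ is an $m$-fold Pfister form. Pfister forms are \emph{round}, and in particular an isotropic Pfister form is hyperbolic. Since $\OO$ being split means exactly that $\NN$ represents $0$ nontrivially, the form $\NN$ then has maximal Witt index and is the hyperbolic form of rank $2^m$. As hyperbolic forms of a given rank are isometric, all split composition algebras of a fixed dimension carry the same norm form up to isometry.

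The heart of the matter, and the step I expect to be the main obstacle, is to promote an isometry of norm forms to an isomorphism of algebras. I would run the Cayley--Dickson construction in reverse: starting from an isotropic $x \neq 0$ and exploiting the identities $x\ovx = \NN(x)\,1_{\OO}$ and $\NN(xy) = \NN(x)\NN(y)$, one produces a nested chain $F \subset K \subset Q \subset \OO$ of composition subalgebras of dimensions $1,2,4,8$, each the double of its predecessor. Hyperbolicity of $\NN$ pins down each doubling parameter up to square class, so that it may be normalised to $1$; the multiplication on $Q$, and then on $\OO$, is thereby rigidly determined by the multiplication on the subalgebra beneath it. Aligning these chains for two split algebras yields the desired isomorphism. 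The delicate points are selecting the intermediate subalgebras so that the doubling parameters stay under control, and treating characteristic $2$, where $\NN$ and its polar form diverge and the doubling formulae must be adjusted; this is precisely where the argument needs the most care.

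For existence I would exhibit the split octonion algebra concretely, for instance as the Zorn vector matrices $\left( \begin{smallmatrix} a & v \\ w & b \end{smallmatrix} \right)$ with $a,b \in F$ and $v,w \in F^3$, equipped with the norm $\NN = ab - v \cdot w$, and verify directly that this is a composition algebra whose norm is visibly hyperbolic. Combined with the uniqueness above, this shows that over any field the split composition algebra of each of the dimensions $2,4,8$ is unique, which is the assertion of the theorem.
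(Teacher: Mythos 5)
The paper does not actually prove this statement: it is quoted as ``one part of Theorem 1.8.1 in \cite{SprVeld}'', so the only proof to compare yours against is the one in that reference --- and your outline is essentially it (norm forms of composition algebras are quadratic Pfister forms, isotropic Pfister forms are hyperbolic, a composition algebra with hyperbolic norm is determined by its dimension, and existence is supplied by $F\oplus F$, $\matn{2}{F}$ and the Zorn algebra). So the route is the right one, and the uniqueness-within-a-fixed-dimension reading is the intended one.

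There is, however, one step whose stated mechanism is wrong. You claim that hyperbolicity of $\NN$ pins each doubling parameter down to a square class, so that it can be normalised to $1$. It does not: over $\mathbb{Q}$ the split octonions contain the anisotropic subalgebra $\mathbb{Q}(\sqrt{2})$, and the set of parameters whose doubling of $\mathbb{Q}(\sqrt{2})$ yields the split quaternion algebra $\matn{2}{\mathbb{Q}}$ is a full coset of the norm group of $\mathbb{Q}(\sqrt{2})$, which meets infinitely many distinct square classes and contains no forced square. The parameter is only ever determined modulo the norm group of the subalgebra being doubled, so the correct move is to choose the chain so that every term is itself split. This is where the isotropic vector should be used: since $\NN$ is hyperbolic its isotropic vectors span $\OO$, and the polar form is nondegenerate, so some isotropic $x$ has $\Tr(x)\neq 0$; then $x^2=\Tr(x)\,x$ shows $e=\Tr(x)^{-1}x$ is an idempotent distinct from $0$ and $1_{\OO}$, and $K=\langle 1_{\OO},e\rangle\cong F\oplus F$ is a split two-dimensional composition subalgebra. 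The norm of a split composition algebra is universal, so every double of $K$ is $\matn{2}{F}$ and every double of $\matn{2}{F}$ is the Zorn algebra, with no dependence on the parameters at all. This also disposes of characteristic $2$, where the chain must in any case begin at a two-dimensional subalgebra because $F\cdot 1_{\OO}$ is not a composition subalgebra there. With that repair your argument is the standard proof.
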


It turns out that any isotropic octonion in the split octonion algebra 
$\OO$ over $F$ left-annihilates a $4$-dimensional
subspace and right-annihilates a $4$-dimensional subspace. 

\begin{lemma}
	\label{lemma:1_octonion_annihilator}
	Let $\OO$ be a split octonion algebra. Then for any isotropic $x \in \OO$, the 
	following is true:
	\begin{equation}
		\dim_F(\OO x) = \dim_F(x\OO) = 4. 
	\end{equation}
	Moreover, $\OO x$ is the set of octonions that are right-annihilated by $\ovx$, and
	$x \OO$ is the set of octonions that are left-annihilated by $\ovx$.
\end{lemma}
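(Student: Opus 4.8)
The plan is to work entirely with the left and right multiplication operators $L_x\colon y\mapsto xy$ and $R_x\colon y\mapsto yx$ on the $8$-dimensional space $\OO$, and to reduce the statement to computing their ranks. By definition $x\OO=\im L_x$ and $\OO x=\im R_x$, while the octonions left- and right-annihilated by $\overline x$ are precisely $\ker L_{\overline x}=\{y:\overline x y=0\}$ and $\ker R_{\overline x}=\{y:y\overline x=0\}$. I would first record that $\overline x$ is again a non-zero isotropic octonion: one has $\NN(\overline x)=\NN(x)=0$, and if $\overline x=0$ then $x=\Tr(x)1_{\OO}$, whence $\NN(x)=\Tr(x)^2$ forces $\Tr(x)=0$ and $x=0$. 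Thus any rank statement proved for $x$ applies verbatim to $\overline x$, which is what makes the counting close up.

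The crux is a lower bound: I claim $\rank L_x\geq 4$ for every non-zero isotropic $x$, and the key is that $\ker L_x=\{y:xy=0\}$ is totally singular for $\NN$. Specialising the identity $(x\overline y)y=\NN(y)x$ (replace $y$ by $\overline y$ and use $\overline{\overline y}=y$, $\NN(\overline y)=\NN(y)$) gives $(xy)\overline y=\NN(y)x$; if $xy=0$ then $\NN(y)x=0$, and since $x\neq 0$ we get $\NN(y)=0$. Since $\ker L_x$ is a subspace on which $\NN$ vanishes, polarisation shows it is totally isotropic for the polar form $\langle\,\cdot,\cdot\,\rangle$, which is non-degenerate on $\OO$; hence $\dim\ker L_x\leq 4$ and $\rank L_x=8-\dim\ker L_x\geq 4$. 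The same computation with $\overline y(yx)=\NN(y)x$ shows $\ker R_x$ is totally singular and $\rank R_x\geq 4$.

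For the matching upper bound I would use $x(\overline x y)=\NN(x)y=0$, i.e. $L_xL_{\overline x}=0$, so that $\im L_{\overline x}\subseteq\ker L_x$. Then $\dim\ker L_x\geq\rank L_{\overline x}\geq 4$ by the lower bound applied to $\overline x$, and combining with the previous paragraph forces $\dim\ker L_x=4$ and $\dim(x\OO)=\rank L_x=4$; the analogue via $(y\overline x)x=\NN(x)y=0$ gives $\dim(\OO x)=\rank R_x=4$. For the ``moreover'' descriptions, the relation $\overline x(xy)=\NN(x)y=0$ yields $\im L_x\subseteq\ker L_{\overline x}$, and both sides now have dimension $4$ (the right-hand one by applying the rank computation to $\overline x$), so $x\OO=\ker L_{\overline x}$ is exactly the set left-annihilated by $\overline x$; symmetrically $(yx)\overline x=\NN(x)y=0$ gives $\OO x=\ker R_{\overline x}$, the set right-annihilated by $\overline x$.

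I expect the only real obstacle to be pinning the rank down to exactly $4$, since the two inequalities come from different sources: the lower bound from the kernel being totally singular, and the upper bound from feeding that same lower bound (applied to $\overline x$) through $L_xL_{\overline x}=0$. The point requiring care is that the totally-singular bound is run through the \emph{polar} form, so it remains valid in characteristic $2$, where an $\NN$-singular subspace is automatically isotropic for the non-degenerate (alternating) polar form.
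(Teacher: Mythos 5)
Your proof is correct, and its overall shape --- reduce everything to the rank of the left- and right-multiplication operators, bound totally singular subspaces by the Witt index $4$ of the non-degenerate norm form on the $8$-space $\OO$, and close with rank--nullity --- is the same as the paper's. The one step you do differently is the upper bound on the rank. The paper observes directly that the \emph{image} $\OO x$ is totally singular, since $\NN(yx)=\NN(y)\NN(x)=0$ by multiplicativity of the norm, so $\rank R_x\leqslant 4$ at once; combined with the kernel bound (which the paper gets essentially as you do, via invertibility of non-isotropic elements rather than the explicit identity $(xy)\ovy=\NN(y)x$, but to the same effect) rank--nullity then pins both dimensions to $4$ in one line. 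You instead obtain the upper bound by feeding the kernel bound for $\ovx$ through the operator identity $L_xL_{\ovx}=0$, which is slightly longer but equally valid. Your route has a genuine payoff: the containments $\im L_{\ovx}\subseteq\ker L_x$ and $\im L_x\subseteq\ker L_{\ovx}$, once all the dimensions are known to be $4$, immediately deliver the ``moreover'' identifications $x\OO=\ker L_{\ovx}$ and $\OO x=\ker R_{\ovx}$ --- a clause of the lemma that the paper's written proof does not explicitly address. Your preliminary observations, that $\ovx$ is again a non-zero isotropic octonion and that the totally-singular dimension bound is run through the non-degenerate polar form and therefore survives characteristic $2$, are both correct and necessary for the argument to close.
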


\begin{proof}
	We prove the statement for right multiplication by $x$. The proof
	for left multiplication is essentially the same. The map
	\begin{equation*}
		\begin{array}{r@{\;\;}c@{\;\;}l}
			R_x : \OO & \rightarrow & \OO \\
			y & \mapsto & yx
		\end{array}
	\end{equation*}
	is an $F$-linear map with $\im(R_x) = \OO x$, which is a totally isotropic
	subspace of $\OO$. Indeed, $\NN(yx) = \NN(y)\NN(x) = 0$ for any
	$y \in \OO$. Since the polar form of $\NN$ is non-degenerate,
	we conclude that $\dim_F(\OO x) \leqslant 4$.
	
	If $x \neq 0$ and $yx = 0$, then $y$ is isotropic for if that were not 
	the case, we would get $x = y^{-1}(yx) = y^{-1}\cdot 0 = 0$,
	a contradiction. It follows that \mbox{$\dim_F(\ker(R_x)) \leqslant 4$}.
	The Rank--Nullity theorem implies that \mbox{$\dim_F(\OO x) = \dim_F(\ker(R_x)) = 4$}. 
\end{proof}

Finally, we describe the centre of $\OO$ and also the elements in an octonion algebra that
`associate' with all other elements. 
By the centre of an octonion algebra $\OO$ we understand 
\mbox{$\{\, c \in \OO\ \mid\ cx = xc\ \mbox{for all}\ x \in \OO\, \}$}. In the literature, 
for example, in \cite{Schafer}, it is sometimes required that central 
elements also ``associate'' with all other elements. We do not require this in our definition,
however, it will become obvious that we have this property free of charge. 

\begin{lemma}
	\label{lemma:1_octonion_centre}
	The centre of an octonion algebra $\OO$ over $F$ is $F\cdot 1_{\OO}$.  
\end{lemma}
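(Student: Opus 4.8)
The inclusion $F\cdot 1_{\OO}\subseteq Z(\OO)$ is immediate, since $1_{\OO}$ acts as the identity and scalars commute with everything; the whole content lies in the reverse inclusion. So let $c$ be central and suppose, aiming at a contradiction, that $c\notin F\cdot 1_{\OO}$. The first thing to record is that $\ovc$ is central too: conjugating $cx=xc$ and using $\bar{xy}=\ovy\,\ovx$ gives $\ovx\,\ovc=\ovc\,\ovx$ for all $x$, and $x\mapsto\ovx$ is onto. The plan is then to manufacture a single strong linear identity for the map $y\mapsto cy$ out of the composition structure, and to play it off against non-degeneracy of the polar form.

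The key step is to polarise the identity $x\ovx=\NN(x)\cdot 1_{\OO}$ to get $x\ovy+y\ovx=\langle x,y\rangle 1_{\OO}$, set $x=c$, and use centrality of $\ovc$ to replace $y\ovc$ by $\ovc\,y$. Expanding $\ovy=\Tr(y)1_{\OO}-y$ and $\ovc=\Tr(c)1_{\OO}-c$ then collapses everything to the master relation
\[ 2\,cy \;=\; \Tr(y)\,c+\Tr(c)\,y-\langle c,y\rangle\,1_{\OO}, \]
valid for all $y\in\OO$. This is the crucial leverage: it says that left multiplication by a central element is pinned down, up to the scalar $\tfrac12\Tr(c)$, by two linear functionals whose image lies in the plane $Fc+F\cdot 1_{\OO}$.

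In characteristic $\neq 2$ I would finish as follows. Write $c=\tfrac12\Tr(c)\cdot 1_{\OO}+c_0$ with $c_0$ trace-free; feeding a trace-free $y=b$ into the master relation gives $c_0 b=-\tfrac12\langle c_0,b\rangle 1_{\OO}\in F\cdot 1_{\OO}$ for every trace-free $b$. Applying multiplicativity of $\NN$ to this yields $\tfrac14\langle c_0,b\rangle^2=\NN(c_0)\NN(b)$ for all trace-free $b$. The left-hand side is a quadratic form of rank at most $1$ on the trace-free hyperplane, whereas the right-hand side is $\NN(c_0)$ times the restriction of $\NN$, which is non-degenerate of rank $7$ there (as $1_{\OO}$ is anisotropic); comparing ranks forces $\NN(c_0)=0$, whence $\langle c_0,\,\cdot\,\rangle$ vanishes on trace-free vectors and also on $1_{\OO}$, so $c_0\perp\OO$ and non-degeneracy gives $c_0=0$, i.e. $c\in F\cdot 1_{\OO}$.

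The genuinely delicate case, and the main obstacle, is characteristic $2$, precisely because $\Tr(1_{\OO})=\langle 1_{\OO},1_{\OO}\rangle=2\NN(1_{\OO})=0$, so the splitting $\OO=F\cdot 1_{\OO}\oplus\OO_0$ is lost and one cannot divide by $2$. Here the master relation degenerates, its left-hand side vanishing, into the purely linear identity $\Tr(y)\,c+\Tr(c)\,y+\langle c,y\rangle\,1_{\OO}=0$ for all $y$. If $\Tr(c)\neq 0$ this would express every $y$ as a combination of $c$ and $1_{\OO}$, absurd since $\dim_F\OO=8$; hence $\Tr(c)=0$. The identity then reads $\langle c,y\rangle\,1_{\OO}=\Tr(y)\,c$ for all $y$, and since $1_{\OO}$ and a putative non-scalar $c$ are linearly independent this forces $\Tr(y)=0$ for every $y$, contradicting the fact that $\Tr=\langle\,\cdot\,,1_{\OO}\rangle$ is a nonzero functional. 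Therefore $c\in F\cdot 1_{\OO}$ in every characteristic.
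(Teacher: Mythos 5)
Your argument is correct, and in fact the paper states this lemma without any proof at all (its standing reference for octonion facts is Springer--Veldkamp), so there is no in-paper argument to compare against; what follows is just a check of your reasoning. The two pillars both hold up: the polarised identity $x\bar{y}+y\bar{x}=\langle x,y\rangle\,1_{\OO}$ is exactly identity (iii) of the paper's second octonion lemma with $z=1_{\OO}$, and centrality of $\bar{c}$ follows from applying the anti-automorphism property of conjugation to $cx=xc$ (alternatively one can skip that step entirely and write $y\bar{c}=\Tr(c)y-yc=\Tr(c)y-cy$ directly from centrality of $c$). The resulting master relation $2cy=\Tr(y)c+\Tr(c)y-\langle c,y\rangle 1_{\OO}$ is right, and your two endgames are sound: in characteristic $\neq 2$ the identity $\tfrac14\langle c_0,b\rangle^2=\NN(c_0)\NN(b)$ on the $7$-dimensional trace-zero hyperplane forces $\NN(c_0)=0$ by comparing ranks of the (equal) polar forms, and then $\langle c_0,\cdot\rangle=0$ on all of $\OO$ kills $c_0$ by non-degeneracy; in characteristic $2$ the relation degenerates to $\Tr(y)c+\Tr(c)y=\langle c,y\rangle 1_{\OO}$, and your dimension count plus the non-vanishing of $\Tr=\langle\cdot,1_{\OO}\rangle$ finishes it. You correctly identified characteristic $2$ as the delicate case (the paper is explicitly written to cover all characteristics, so this matters) and handled it without dividing by $2$. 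Note also that your proof addresses precisely the paper's weak definition of centre (commuting only, no associating condition), which is the version actually claimed.
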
 

\begin{lemma}
	Let $\OO$ be an octonion algebra over $F$. If $u \in \OO$ satisfies 
	\begin{equation}
		(xu)y = x(uy)
	\end{equation}
	for all $x,y \in \OO$, then $u \in F\cdot 1_{\OO}$. 
\end{lemma}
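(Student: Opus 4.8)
The plan is to reformulate the hypothesis operator-theoretically and then to show that right multiplication by $u$ must be a scalar operator. Write $L_x,R_x\in\operatorname{End}_F(\OO)$ for left and right multiplication by $x$, so that $L_x(y)=xy$ and $R_x(y)=yx$. The hypothesis $(xu)y=x(uy)$ says exactly that $u$ lies in the middle nucleus of $\OO$. Conjugating the identity and using $\bar{xy}=\ovy\,\ovx$ shows that $\ovu$ is middle-nuclear too; more to the point, linearising the three alternative laws shows that the associator $(ab)c-a(bc)$ is alternating, so $[x,u,y]=0$ for all $x,y$ forces $[u,x,y]=[x,y,u]=0$ as well. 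In particular $(xy)u=x(yu)$, i.e. $R_uL_x=L_xR_u$, so $R_u$ commutes with every left multiplication. If I can show that this forces $R_u=\lambda\,\operatorname{id}$ for some $\lambda\in F$, then $u=R_u(1_{\OO})=\lambda\,1_{\OO}\in F\cdot 1_{\OO}$ and I am done.

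First I reduce to the split algebra: a middle-nuclear element stays middle-nuclear after extending scalars to the algebraic closure $\bar F$, where by Theorem \ref{theorem:1_unique_split_algebra} the algebra $\OO\otimes_F\bar F$ is split, and $\OO\cap\bar F\cdot 1=F\cdot 1$, so it suffices to treat the split case over an algebraically closed field. There the claim $R_u\in F\cdot\operatorname{id}$ follows from Schur's lemma once I know that the subalgebra $\mathcal L\subseteq\operatorname{End}_F(\OO)$ generated by the $L_x$ acts irreducibly, i.e. that $\OO$ has no subspace $M$ with $0\ne M\ne\OO$ and $\OO M\subseteq M$. To rule these out, suppose $M$ is such a subspace and take $0\ne m\in M$. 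If $\NN(m)\ne 0$ then $m$ is invertible and $R_m$ is bijective, so $\OO=\OO m\subseteq\OO M\subseteq M$, a contradiction; hence every nonzero vector of $M$ is isotropic and $M$ is totally isotropic, giving $\dim_F M\le 4$. On the other hand Lemma \ref{lemma:1_octonion_annihilator} gives $\dim_F(\OO m)=4$ with $\OO m\subseteq M$, whence $M=\OO m$ has dimension exactly $4$.

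It remains to contradict the existence of a $4$-dimensional, totally isotropic, left-invariant subspace $M=\OO m$, and this is the crux. Choosing $m$ with $\Tr(m)=\NN(m)=0$ (possible, since the trace is a nonzero linear functional on the $4$-space $M$), Lemma \ref{lemma:1_octonion_annihilator} identifies $M=\OO m=\ker R_{\ovm}$, so invariance would read $(az)\ovm=0$ for all $a\in\OO$ and $z\in M$. I expect the cleanest refutation is an explicit computation in the Zorn vector-matrix model of the split octonions: taking $m$ to be a rank-one vector matrix, one checks that left multiplication carries $\OO m$ off itself, so no such $M$ exists. With irreducibility in hand, Schur's lemma over $\bar F$ shows that the commutant of $\mathcal L$ is $F\cdot\operatorname{id}$, hence $R_u$ is scalar and $u\in F\cdot 1_{\OO}$. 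The main obstacle is precisely this nonexistence statement: it is the only point where the genuine non-associativity of $\OO$ is used, since every purely formal manipulation of the bilinear identities merely reproduces the nuclear hypothesis. A self-contained alternative, in the same spirit as the hands-on arguments used earlier, is to fix the Zorn model (valid in every characteristic), write $u$ in coordinates, and read off from $(e_iu)e_j=e_i(ue_j)$ that every non-identity coordinate of $u$ must vanish.
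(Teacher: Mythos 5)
The paper states this lemma without proof, so there is no in-text argument to compare against line by line; judged on its own, your write-up is a correct chain of reductions with one genuine hole at exactly the point you yourself flag as ``the crux''. Everything before that is sound: the alternating associator does upgrade middle-nuclearity to full nuclearity, so $R_u$ commutes with every $L_x$; the scalar extension to $\bar F$ is legitimate (every octonion algebra over an algebraically closed field is split, since its norm form is isotropic, and $\OO\cap\bar F\cdot 1_{\OO}=F\cdot 1_{\OO}$); and the reduction of irreducibility to the non-existence of a $4$-dimensional totally isotropic left ideal $M=\OO m$ is correct. But the decisive non-existence statement is only asserted (``I expect the cleanest refutation is\ldots''), not proved. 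Worse, the reduction you envisage --- checking a single ``rank-one'' $m$ in the Zorn model --- silently assumes that all nonzero isotropic trace-zero octonions are equivalent under algebra automorphisms (transitivity of $G_2(F)$), which is established neither in the paper nor in your proposal; Witt's lemma gives transitivity only under isometries, and those need not preserve the multiplication. Without that, a computation for one particular $m$ says nothing about the arbitrary left ideal $M$ you started with.

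The gap is closable, and cheaply, in the paper's own coordinates. For $m=e_1$ the multiplication table gives $\OO e_1=\langle e_0,e_{-\omega},e_{-\bar\omega},e_1\rangle$, while $e_{-1}e_{-\omega}=e_{\bar\omega}\notin\OO e_1$, so $\OO e_1$ is not a left ideal --- but, as noted, you would still owe the reduction to this particular $m$. The more economical route, and surely the intended one given that the paper sets up an explicit basis precisely for such hands-on arguments, is the ``self-contained alternative'' you mention in your final sentence and do not carry out: write $u=\sum_{i\in\pm I}\lambda_ie_i$ and impose $(e_iu)e_j=e_i(ue_j)$ for a handful of pairs $(i,j)$. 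Already the single pair $(i,j)=(1,-1)$ forces $\lambda_{\omega}=\lambda_{\bar\omega}=\lambda_{-\omega}=\lambda_{-\bar\omega}=0$, and the remaining pairs kill $\lambda_{\pm1}$ and force $\lambda_0=\lambda_{-0}$, whence $u\in F\cdot 1_{\OO}$. Combined with your scalar-extension step to cover non-split $\OO$, that is a complete proof, and it dispenses with Schur's lemma and the ideal-theoretic detour altogether.
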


\begin{corollary}
    \label{cor:1_auub}
    Suppose that $u \in \OO$ is an invertible octonion. Then
    \begin{equation}
	(A \ovu   )(uB) = \NN(u) AB
    \end{equation}
    holds for all $A,B \in \OO$ if and only if
    $u \in F\cdot 1_{\OO}$.
\end{corollary}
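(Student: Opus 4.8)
The plan is to prove the two implications separately: sufficiency by a direct computation, and necessity by reducing to the lemma immediately preceding this corollary.

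For sufficiency, suppose $u = \lambda\cdot 1_{\OO}$ with $\lambda\in F$; since $u$ is assumed invertible we have $\lambda\neq 0$, though this is not needed here. Conjugation is $F$-linear and fixes the identity, $\bar{1_{\OO}} = \Tr(1_{\OO})\cdot 1_{\OO} - 1_{\OO} = 1_{\OO}$, so $\bar u = \lambda\cdot 1_{\OO} = u$, while $\NN(u) = \lambda^2$. Because $1_{\OO}$ is the identity and scalars from $F$ pass freely through octonion multiplication, I would simply compute $(A\bar u)(uB) = (\lambda A)(\lambda B) = \lambda^2(AB) = \NN(u)\,AB$ for all $A,B\in\OO$, which is exactly the asserted identity.

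For necessity, assume $(A\bar u)(uB) = \NN(u)\,AB$ holds for all $A,B\in\OO$. The key move is to exploit the fact that the identity is universal in $A$: I would substitute $A\mapsto Au$, obtaining $\bigl((Au)\bar u\bigr)(uB) = \NN(u)\,(Au)B$. Here the inner product $(Au)\bar u$ can be collapsed using the identity $(x\bar y)y = \NN(y)\,x$ established earlier (taking $x=A$, $y=\bar u$, and using $\bar{\bar u}=u$ together with $\NN(\bar u)=\NN(u)$), which gives $(Au)\bar u = \NN(u)\,A$. Pulling this scalar out of the octonion product turns the left-hand side into $\NN(u)\bigl(A(uB)\bigr)$, so that $\NN(u)\,A(uB) = \NN(u)\,(Au)B$ for all $A,B\in\OO$.

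Since $u$ is invertible, $\NN(u)\neq 0$, so cancelling the scalar yields $A(uB) = (Au)B$ for all $A,B\in\OO$. This is precisely the hypothesis of the preceding lemma (with $x=A$, $y=B$), whose conclusion is that $u\in F\cdot 1_{\OO}$, completing the argument. I do not anticipate a genuine obstacle: the entire reduction rests on the single substitution $A\mapsto Au$ combined with the norm identity, and the only point demanding care is the appeal to invertibility, which guarantees $\NN(u)\neq 0$ and hence licenses the cancellation that exposes the associativity condition of the lemma.
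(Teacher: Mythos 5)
Your proof is correct and follows exactly the route the paper intends: the paper states this as an immediate corollary of the preceding lemma on elements $u$ with $(xu)y=x(uy)$, and its remark confirms that the invertible case is handled by just such a reduction (the non-invertible case needing a separate argument). Your substitution $A\mapsto Au$ together with $(xu)\ovu=\NN(u)x$ and cancellation of $\NN(u)\neq 0$ is the natural way to expose that associativity condition, so nothing further is needed.
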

   
\begin{remark}
	The statement of this corollary holds even if $u$ is not invertible, but this requires
	a different, more hands-on proof. 
\end{remark}

In the subsequent constructions we consider certain subalgebras of $\OO$. We say
that an $F$-subalgebra $S$ of $\OO$ is \textit{sociable} if $S$ contains $F\cdot 1_{\OO}$ and
 for all $x,y \in S$ and for all $z \in \OO$ we have $(xy)z = x(yz)$. 
 
Finally, we prove two technical lemmas which will be useful later in the paper.  First, we show that 
no invertible $F$-linear maps on $\OO$ can change the order of the octonion product. 

\begin{lemma}
    \label{lemma:1_white_phipsi1}
    There are no invertible $F$-linear maps $\phi,\psi : \OO \rightarrow \OO$ such that for all
    $A,B \in \OO$ it is true that $AB = (B\psi) (A\phi)$.
\end{lemma}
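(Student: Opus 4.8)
The plan is to argue by contradiction: suppose invertible $F$-linear maps $\phi,\psi$ satisfy $AB = (B\psi)(A\phi)$ for all $A,B\in\OO$, and pin down $\phi$ and $\psi$ completely by feeding in the identity element. Setting $A=1_{\OO}$ gives $B = (B\psi)(1\phi)$ for all $B$; writing $q = 1\phi$, this says that right multiplication by $q$ inverts $\psi$, so $\psi$ invertible forces $\NN(q)\neq 0$ and, via the inverse formula $q^{-1}=\NN(q)^{-1}\ovq$ together with $(x\ovy)y=\NN(y)x$, yields $B\psi = Bq^{-1}$. Setting $B=1_{\OO}$ gives $A = (1\psi)(A\phi)$; since $1\psi = q^{-1}$, left multiplication by $q^{-1}$ inverts $\phi$, and using the companion identity $q(\ovq y)=\NN(q)y$ I would solve this to get $A\phi = qA$. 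Thus both maps are forced to be one-sided multiplications determined by the single octonion $q$.

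Next I would substitute these forms back into the defining relation, obtaining $(Bq^{-1})(qA) = AB$, equivalently
\[
(B\ovq)(qA) = \NN(q)\,AB \qquad\text{for all } A,B\in\OO.
\]
The decisive move is the specialisation $A = \ovq$: since $q\ovq = \NN(q)\,1_{\OO}$, the left side collapses to $\NN(q)\,(B\ovq)$ while the right side is $\NN(q)\,(\ovq B)$, so after cancelling the nonzero scalar $\NN(q)$ I get $B\ovq = \ovq B$ for every $B$. Hence $\ovq$, and therefore $q$, lies in the centre of $\OO$, which by Lemma \ref{lemma:1_octonion_centre} is $F\cdot 1_{\OO}$.

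Finally, with $q = \beta\,1_{\OO}$ for some nonzero $\beta\in F$, the displayed identity degenerates to $\beta^2\,BA = \beta^2\,AB$, that is $AB = BA$ for all $A,B$, contradicting the fact that an octonion algebra is not commutative. The only delicate part is the first paragraph: because $\OO$ is non-associative and the maps act on the right, I must be careful when inverting $\phi$ and $\psi$, relying on the inverse formula and the identities $x(\ovx y)=\NN(x)y$ and $(x\ovy)y=\NN(y)x$ rather than on naive cancellation. One could instead recognise the displayed identity as a variant of Corollary \ref{cor:1_auub}, but the substitution $A=\ovq$ is more direct and avoids appealing to the ``only if'' direction there.
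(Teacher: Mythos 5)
Your proof is correct and follows essentially the same strategy as the paper's: specialise at $1_{\OO}$ to identify $\psi$ and $\phi$ as one-sided multiplications by a single invertible octonion, force that octonion into the centre, and contradict non-commutativity. The only (harmless) variations are that you obtain $\phi$ as the \emph{left} multiplication $A\mapsto qA$ via $B=1_{\OO}$ where the paper obtains the right multiplication $A\mapsto Au$ via $B=u$, and you derive centrality from the substitution $A=\ovq$ rather than $B=1_{\OO}$.
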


\begin{proof}
    For the sake of finding a contradiction, suppose that $\phi,\psi : \OO \rightarrow \OO$ are
    invertible $F$-linear maps such that the identity $AB = (B\psi)(A\phi)$ holds for all \mbox{$A,B\in \OO$}.
    In particular, substituting $A = 1_{\OO}$, we get $B = (B\psi) u$ for all $B \in \OO$, where $u = 1 \phi$,
    so $B \psi = B u^{-1}$ for all $B \in \OO$, which means that the map $\psi$ is right
    multiplication by $u^{-1}$. Note that the existence of $u^{-1}$ follows from the
    invertibility of the map $\psi$. Thus, our identity has the form $AB = (B u^{-1}) (A\phi)$
    for all $A,B \in \OO$. We can substitute $B = u$ which immediately gives us $A\phi = Au$ for
    all $A \in \OO$, so the map $\phi$ is right multiplication by $u$. Finally, we get
    $AB = (B u^{-1}) (A u)$ for all $A,B \in \OO$ and specifically for $B = 1_{\OO}$ we get
    $A = u^{-1} (A u)$, or likewise $u A = A u$ for all $A \in \OO$. Therefore $u$ is a
    scalar multiple of $1_{\OO}$, i.e. $u = \mu \cdot 1_{\OO}$ for some $\mu \in F$.
    Since the linear maps $\phi$ and $\psi$ are invertible, $\mu$ is non-zero, and we get
    $AB = (Bu^{-1})(Au) = (\mu^{-1} \mu \cdot 1_{\OO}) BA = BA$ for all $A,B \in \OO$, which is definitely
    not true as $\OO$ is not commutative.
\end{proof}

Second, we show that if two invertible linear maps commute with the octonion product, then
these are mutually invertible scalar multiplication maps.

\begin{lemma}
    \label{lemma:1_white_phipsi2}
    Suppose $\phi,\psi: \OO \rightarrow \OO$ are two invertible $F$-linear maps such that
    \mbox{$AB = (A\phi) (B\psi)$} for all $A,B \in \OO$. Then $\psi : x \mapsto \mu x$
    for some non-zero $\mu \in F$ and $\phi = \psi^{-1}$, i.e. $\phi : x \mapsto \mu^{-1} x$.
\end{lemma}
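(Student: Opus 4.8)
The plan is to exploit the same substitution strategy that worked in Lemma~\ref{lemma:1_white_phipsi1}, but now applied to the identity $AB = (A\phi)(B\psi)$. First I would substitute $A = 1_{\OO}$ to obtain $B = (1\phi)(B\psi)$ for all $B \in \OO$. Writing $w = 1\phi$, this says $(B\psi) = w^{-1} B$ for all $B$, where $w^{-1}$ exists because $\psi$ is invertible (hence $w$ must be invertible, being left-multiplication's recoverable factor). Thus $\psi$ is \emph{left} multiplication by $w^{-1}$. Symmetrically, substituting $B = 1_{\OO}$ gives $A = (A\phi)(1\psi)$, so writing $v = 1\psi$ we find $A\phi = A v^{-1}$, meaning $\phi$ is \emph{right} multiplication by $v^{-1}$.

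Next I would feed these two descriptions back into the original identity to pin down $v$ and $w$. We now have $AB = (A v^{-1})(w^{-1} B)$ for all $A,B \in \OO$. The natural move is to invoke Corollary~\ref{cor:1_auub}, which characterises exactly when an identity of the shape $(A\ovu)(uB) = \NN(u)AB$ forces $u \in F\cdot 1_{\OO}$. To match that template I would relate $v^{-1}$ and $w^{-1}$: setting $A = 1_{\OO}$ in $AB = (A v^{-1})(w^{-1}B)$ gives $B = v^{-1}(w^{-1}B)$, and setting $B = 1_{\OO}$ gives $A = (Av^{-1})w^{-1}$. From $B = v^{-1}(w^{-1}B)$ with $B = 1_{\OO}$ we get $1 = v^{-1}w^{-1}$, so $w^{-1} = v$ and hence $w = v^{-1}$. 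Consequently $\psi$ is left multiplication by $v$ and $\phi$ is right multiplication by $v^{-1}$, and the identity becomes $AB = (Av^{-1})(vB)$ for all $A,B$.

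At this point the identity reads $(A v^{-1})(vB) = AB$. To apply Corollary~\ref{cor:1_auub} I would normalise: if $v$ has norm $\NN(v) = \nu$, then $\ovv = \nu v^{-1}$, so $(A v^{-1})(vB) = \nu^{-1}(A\ovv)(vB)$, and the hypothesis says this equals $AB$, i.e. $(A\ovv)(vB) = \nu\, AB = \NN(v)AB$ for all $A,B$. This is precisely the hypothesis of Corollary~\ref{cor:1_auub} with $u = v$ (and $v$ invertible), so the corollary yields $v \in F\cdot 1_{\OO}$, say $v = \mu^{-1}\cdot 1_{\OO}$ for some nonzero $\mu \in F$. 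Then $\psi$, being left multiplication by $v$, is the scalar map $x \mapsto \mu^{-1}x$; but here I must be careful about the direction, since the statement asserts $\psi : x \mapsto \mu x$ and $\phi : x \mapsto \mu^{-1}x$. I would reconcile this by choosing the scalar name so that $\psi : x \mapsto \mu x$ matches, i.e. set $v = \mu\cdot 1_{\OO}$; then $\phi$ is right multiplication by $v^{-1} = \mu^{-1}\cdot 1_{\OO}$, giving $\phi : x \mapsto \mu^{-1}x = \psi^{-1}$, exactly as claimed.

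The main obstacle I anticipate is \emph{not} the algebraic manipulation but making rigorous the step that $\phi$ and $\psi$ are genuine one-sided multiplications: the substitutions $A=1$ and $B=1$ give the multiplication formulas cleanly, but one must check invertibility of the relevant octonions ($v$ and $w$) before writing $v^{-1}$, and confirm that the two descriptions are mutually consistent (that $w = v^{-1}$) rather than over-determined. Once the identity is massaged into the exact form $(A\ovv)(vB) = \NN(v)AB$, the conclusion $v \in F\cdot 1_{\OO}$ is immediate from Corollary~\ref{cor:1_auub}, and the final bookkeeping to match the sign/direction conventions in the statement is routine. I would take care that scalar multiples of $1_{\OO}$ are being silently identified with field elements, as the paper's convention allows, so that $\mu \in F$ and $\mu x$ are unambiguous.
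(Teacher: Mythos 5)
Your proposal is correct and follows essentially the same route as the paper's proof: substitute $A = 1_{\OO}$ and $B = 1_{\OO}$ to identify $\psi$ and $\phi$ as one-sided multiplications, show the two octonions are mutual inverses, rewrite the identity in the form $(A\ovu)(uB) = \NN(u)AB$, and invoke Corollary~\ref{cor:1_auub}. The only cosmetic difference is how you pin down that the two octonions are inverse to each other (re-substituting $A=B=1_{\OO}$ rather than the paper's substitution $B = u^{-1}$), which changes nothing of substance.
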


\begin{proof}
    Suppose $\phi, \psi: \OO \rightarrow \OO$ are $F$-linear maps such that
    $AB = (A\phi) (B\psi)$ for all $A,B \in \OO$. When $A=1_{\OO}$ we get $B\psi = uB$ for all
    $B \in \OO$ where $u = (1_{\OO}\phi)^{-1}$, so
    the map $\psi$ is left multiplication by $u$.
    Substituting $B = 1_{\OO}$ on the other hand gives
    us $A = (A\phi) (1_{\OO}\psi)$ for all $A$ and so $A\phi = A v$ where $v =
    (1_{\OO}\psi)^{-1}$, so $\phi$ is the right multiplication by $v$.
    Therefore the condition in this case becomes $AB = (Av)(uB)$
    for all $A,B \in \OO$. Substituting $B = u^{-1}$, we get $Au^{-1} = Av$ for all
    $A \in \OO$, and therefore $v = u^{-1}$, and our identity turns out to be
    $AB = (Au^{-1})(uB)$ for
    all $A,B\in \OO$. Now since $u$ is invertible, we can write $u^{-1} = N(u)^{-1} \ovu$.
    Finally, by Corollary \ref{cor:1_auub}, $u$ must be a scalar multiple of $1_{\OO}$,
    i.e. $u = \mu \cdot 1_{\OO}$.
\end{proof}

The statements in Lemmas \ref{lemma:1_white_phipsi1} and \ref{lemma:1_white_phipsi2} are
true even when $\OO$ is not split.
 
\section{A basis for the split octonion algebra} 

Theorem \ref{theorem:1_unique_split_algebra} 
allows us to choose a basis for $\OO$ and use it in our further constructions.
Otherwise speaking, we redefine the split octonion algebra $\OO$ over $F$ in the following way. 

\begin{definition}
	If $F$ is any field, then the split octonion algebra over $F$ is defined as an
	$8$-dimensional vector space $\OO = \OO_F$ with basis $\{e_i \mid i \in \pm I\}$,
	where $I = \{0,1,\omega,\ombb\}$, $\pm I = \{\pm 0, \pm 1, \pm \omega,
	\pm \ombb\}$ and bilinear multiplication given by the following table.
\end{definition}

    \begin{center}
        \begin{tabular}{ c || c | c | c | c | c | c | c | c | }
 & \xone & \xtwo & \xthree & \xfour & \xfive & \xsix & \xseven & \xeight \\ \hline \hline
 \xone & $0$ & $0$ & $0$ & $0$ & \xone & \xtwo & \m\xthree & \m\xfour \\ \hline
 \xtwo & $0$ & $0$ & \m\xone & \xtwo & $0$ & $0$ & \m\xfive & \xsix \\ \hline
 \xthree & $0$ & \xone & $0$ & \xthree & $0$ & \m\xfive & $0$ & \m\xseven \\ \hline
 \xfour & \xone & $0$ & $0$ & \xfour & $0$ & \xsix & \xseven & $0$\\ \hline
 \xfive & $0$ & \xtwo & \xthree & $0$ & \xfive & $0$ & $0$ & \xeight\\ \hline
 \xsix & \m\xtwo & $0$ & \m\xfour & $0$ & \xsix & $0$ & \xeight & $0$ \\ \hline
 \xseven & \xthree & \m\xfour & $0$ & $0$ & \xseven & \m\xeight & $0$ & $0$ \\ \hline
 \xeight & \m\xfive & \m\xsix & \xseven & \xeight & $0$ & $0$ & $0$ & $0$\\ \hline
        \end{tabular}
    \end{center}
In other words, we get
    \begin{enumerate}[(i)]
    	\item $e_1 e_{\omega} = -e_{\omega} e_1 = e_{-\ombb}$;
    	\item $e_1 e_0 = -e_{-0} e_1 = e_1$; 
    	\item $e_{-1} e_1 = - e_0$ and $e_0 e_0 = e_0$;
    \end{enumerate}
and images under negating all subscripts (including $0$), and multiplying all subscripts
by $\omega$, where $\omega^2 = \ombb$ and $\omega \ombb = 1$. All other
products of basis vectors are $0$. Essentially, this is the same basis as given
in section 4.3.4 of \cite{FSG}. Thus, $e_0$ and $e_{-0}$ are orthogonal idempotents
with $e_0 + e_{-0} = 1_{\OO}$. Now, if $x = \sum_{i \in \pm I} \lambda_i e_i$, 
then the trace and the norm can be defined in the following way:
\begin{equation}
	\begin{array}{r@{\;}c@{\;}l}
		\Tr(x) & = & \lambda_0 + \lambda_{-0}, \\
		\NN(x) & = & \lambda_{-1} \lambda_1 + \lambda_{\omg} \lambda_{-\omg} + 
			\lambda_{\omega}\lambda_{-\omega} + \lambda_0 \lambda_{-0}.
	\end{array}
\end{equation}
Note that whenever we obtain an octonion which is a scalar multiple of $1_{\OO}$, we
understand it as a field element.
The fact that our newly defined algebra $\OO$ is indeed a composition algebra can be verified 
by a tedious but straightforward computation. Note that $\NN(e_i) = 0$ for $i \neq \pm 0$,
so $\OO$ is indeed a split octonion algebra.

The involution $x \mapsto \ovx$ is the extension by linearity of 
\begin{equation}
	e_i \mapsto - e_i\ (i \neq \pm 0),\ e_0 \leftrightarrow e_{-0}.
\end{equation}

\section{Albert vectors}

For further discussion we consider $\OO = \OO_F$ to be an octonion algebra 
over the field $F$. The Albert space $\J=\J_F$
is the $27$-dimensional vector space spanned by the elements of the form
\begin{equation}
	(a,b,c\mid A,B,C) = \begin{bmatrix}
		a & C & \ovB \\
		\ovC & b & A \\
		B & \ovA & c
	\end{bmatrix},
\end{equation}
where $a,b,c,A,B,C \in \OO$ and furthermore $a,b,c \in \langle 1_{\OO} \rangle$. To denote
certain subspaces of $\J$ we use the following intuitive notation. The $10$-dimensional
subspace spanned by the Albert vectors of the form $(a,b,0\mid 0,0,C)$ is denoted
$\J_{10}^{abC}$, while the $8$-space spanned by the vectors $(0,0,0\mid A,0,0)$ 
is denoted $\J_{8}^{A}$ and so on. That is, the subscript determines the dimension
and the superscript shows which of the six `co{\"o}rdinates' we use to span the
corresponding subspace. Of course, this notation is by no means complete as it does not
allow us to denote any possible subspace of $\J$. If this is the case, we specify 
the spanning vectors and denote the corresponding space in some other manner.

Lacking the associativity in $\OO$ we also need to be slightly careful when we calculate
the determinant of $X$. For these purposes we define the Dickson--Freudenthal determinant
as
\begin{equation}
	\fdet(X) = abc - aA\ovA - bB\ovB - cC\ovC + \Tr(ABC).
\end{equation}
This is a cubic form on $\J$ and it can be shown that it is equivalent to the 
original Dickson's cubic form used to construct the group of type $\E_6$. 

We define the group $\SE_6(F)$ or $\SE_6(F,\OO)$ if we want to specify the octonion 
algebra, to be the group of all $F$-linear maps on $\J$ preserving
 the Dickson--Freudenthal determinant. If $F = \Fq$, then we denote this by $\SE_6(q)$.
  The group $\EE_6(F)$ is defined as the quotient
 of $\SE_6(F)$ by its centre.
Suppose $M$ is a $3\times 3$ matrix written over $\OO$. If $M$ is written over any 
sociable subalgebra of $\OO$, then for an element $X \in \J$ the mapping 
$X \mapsto \ovM^{\T} X M$ makes sense. Indeed, every entry in the matrix $\ovM^{\T} X M$
is a sum of the terms of the form $m_1 x m_2$, where $m_1$ and $m_2$ belong to the same
sociable subalgebra, and so $(m_1 x) m_2 = m_1 (x m_2)$.

Suppose $X = (a,b,c \mid A,B,C)$ and $Y = (d,e,f\mid D,E,F)$. Define the mixed form
$M(Y,X)$ as
\begin{multline}
	M(Y,X) = bcd + ace + abf - d A\ovA  -e B \ovB - f C\ovC \\
	- a(D\ovA + A\ovD) - b(E\ovB + B\ovE) - c(F\ovC + C\ovF) \\
	+ \Tr(DBC + ECA + FAB). 
\end{multline}
Note that if $F \neq \mathbb{F}_2$, then $M(X,Y)$ can be obtained from the 
Dickson--Freudenthal
determinant, for we have
\begin{multline}
	M(X,Y) = \frac{1}{\alpha (\alpha - 1)} \fdet(X + \alpha Y) \\
	- \frac{1}{\alpha - 1} \fdet(X+Y) + \frac{1}{\alpha} \fdet(X) - (\alpha+1)\fdet(Y),
\end{multline}
for any $\alpha \not\in \{0,1\}$.

We colour the non-zero Albert vectors in $\J$ according to the following rules. 

\begin{definition}
	A non-zero Albert vector $X \in \J$ is called
	\begin{enumerate}[(i)]
		\item \textit{white} if $M(Y,X) = 0$ for all $Y \in \J$;
		\item \textit{grey} if $\Delta(X) = 0$ and there exists $Y \in \J$ such that 
			$M(Y,X) \neq 0$;
		\item \textit{black} if $\Delta(X) \neq 0$ and $X$ is not white.  
	\end{enumerate}
	A \textit{white/grey/black} point is a $1$-dimensional subspace of $\J$ spanned by a 
	white/grey/\newline black vector. 
\end{definition}

For example, the vector $(0,0,1\mid 0,0,0)$ is white, because if $Y$
is an arbitrary Albert vector, then $M(Y,X) = 0$. Similarly, $(\lambda,1,1\mid 0,0,0)$,
where $\lambda \neq 0$, is black, since in this case $\Delta(X) = \lambda \neq 0$,
and it is certainly not white as there exists $Y = (a,b,c\mid A,B,C)$ such that 
$M(Y,X)\neq 0$:
\begin{equation}
	M(Y,X) = \lambda (bc - A\ovA) + (ac - B\ovB) + (ab - C\ovC). 
\end{equation}
Taking, for instance, $Y = (0,1,1\mid 0,0,0)$, we get $M(Y,X) = \lambda \neq 0$. Finally, 
$(0,1,1\mid 0,0,0)$ is grey as $\Delta(X) = 0$ and for $Y = (a,b,c\mid A,B,C)$ the
value of $M$ is given by
\begin{equation}
	M(Y,X) = (ac - B\ovB) + (ab - C\ovC),
\end{equation}
so we may take $Y = (1,1,0\mid 0,0,0)$ to get $M(Y,X) = 1 \neq 0$. 
Later we will also show that if $X$ is white, then $\fdet(X) = 0$. 
The terms white, grey and black were introduced by Cohen and Cooperstein 
\cite{CC}. In the paper by Aschbacher \cite{Asch1} they are called
'singular', 'brilliant non-singular' and 'dark' respectively. 
Jacobson \cite{Jac2} uses the terms 'rank $1$', 'rank $2$' and 'rank $3$'. 

It is clear that the action of $\SE_6(F)$ preserves the colour, except possibly in case
$F = \mathbb{F}_2$, when white and grey vectors might conceivably be intermixed. However, 
as we will see now, this does not happen. 

Let $X = (a,b,c\mid A,B,C)$ and consider the case $Y = (0,0,1\mid 0,0,0)$.  Then  
\mbox{$\Delta(X+Y) - \Delta(X) = ab - C\ovC$}, which is a quadratic form whose
radical in $\J$ is 
\mbox{$17$-dimensional}.  If $Y = (0,1,1\mid 0,0,0)$ and 
$F=\mathbb{F}_2$, then $a^2 = a$ and so the form  \mbox{$\Delta(X+Y) - \Delta(X) = a+ab+ac-B\ovB-C\ovC$} is 
quadratic with $9$-dimensional radical. This
shows that $(0,0,1\mid 0,0,0)$ and $(0,1,1\mid 0,0,0)$ are in different orbits 
of the isometry group for any field.

\section{Some elements of $\SE_6(F)$}

Throughout this section, let $X = (a,b,c\mid A,B,C)$ to be an arbitrary element of 
$\J = \J_F$.\ We encode some of the elements of $\SE_6(F)$ by the $3\times 3$
 matrices written over
social subalgebras of $\OO=\OO_{F}$. As we mentioned before, if such a matrix $M$ is written
over any sociable subalgebra of $\OO$, then the expression $\ovM^{\T} X M$ makes sense.
Furthermore, the action of the form $X \mapsto \ovM^{\T} X M$ is obviously $F$-linear.
If two matrices $M$ and $N$ are written over the same sociable subalgebra, then we have 
enough associativity to see that the action by the product $MN$ is the same as the product
of the actions, that is
\begin{equation}
	(\ovN \ovM)^{\T} X (MN) = \ovN^{\T} (\ovM^{\T} X M) N.
\end{equation} 
In general, the action by the product of two matrices is not defined whereas the product 
of the actions still is. Note that also $-\II_3$ acts trivially on $\J$. 

We first notice that the elements 
\begin{equation}
	\delta = \begin{bmatrix}
		0 & 1 & 0 \\
		1 & 0 & 0 \\
		0 & 0 & 1
	\end{bmatrix},\quad 
	\tau = \begin{bmatrix}
		0 & 1 & 0 \\
		0 & 0 & 1 \\
		1 & 0 & 0
	\end{bmatrix}
\end{equation}
preserve the Dickson--Freudenthal determinant. Their actions are given by
\begin{equation}
	\begin{array}{r@{\;\;}c@{\;\;}l}
		\delta: (a,b,c\mid A,B,C) & \mapsto & (b,a,c\mid \ovB, \ovA, \ovC), \\
		\tau: (a,b,c\mid A,B,C) & \mapsto & (c,a,b\mid C,A,B). 
	\end{array}
\end{equation}

Now let $x$ be any octonion and consider the matrices
\begin{equation}
	M_x = \begin{bmatrix}
		1 & x & 0 \\
		0 & 1 & 0 \\
		0 & 0 & 1 
	\end{bmatrix},\quad 
	M_x' = \begin{bmatrix}
		1 & 0 & 0 \\
		0 & 1 & x \\
		0 & 0 & 1
	\end{bmatrix},\quad 
	M_x'' = \begin{bmatrix}
		1 & 0 & 0 \\
		0 & 1 & 0 \\
		x & 0 & 1
	\end{bmatrix}.
\end{equation}
Note that the elements $M_x'$, $M_x''$ can be obtained from $M_x$ by applying 
the triality element $\tau$, so to show that all three families described above preserve
the Dickson--Freudenthal determinant, we only need to consider one of them. 

\begin{lemma}
	\label{lemma:1_Mx_det}
	The elements $M_x$, where $x \in \OO$ is any octonion, preserve the Dickson--Freudenthal
	determinant, and hence they encode the elements of $\SE_6(F)$. 
\end{lemma}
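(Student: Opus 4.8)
The plan is to compute the image $X' = \overline{M_x}^{\T} X M_x$ explicitly and substitute it into $\fdet$. First I would record that $x$ together with $1_{\OO}$ generates a sociable subalgebra: since $\OO$ is quadratic, $x^2 = \Tr(x)\,x - \NN(x)\,1_{\OO}$, so $F[x]=\langle 1_{\OO},x\rangle$ is $2$-dimensional, and because $\OO$ is alternative (its associator is alternating and vanishes whenever two arguments lie in $\langle 1_{\OO},x\rangle$) this subalgebra is sociable. Hence $\overline{M_x}^{\T} X M_x$ is well-defined by the discussion preceding the lemma, every entry being a sum of terms $m_1\xi m_2$ with $m_1,m_2\in F[x]$, and the resulting map is manifestly $F$-linear.

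Next I would carry out the matrix product and obtain the action
\[
 M_x\colon (a,b,c\mid A,B,C)\ \longmapsto\ \bigl(a,\ b+a\NN(x)+\inner{x}{C},\ c \mid A+\overline{Bx},\ B,\ ax+C\bigr).
\]
The only delicate point here is that the three diagonal entries of $X'$ must again be scalars in $\langle 1_{\OO}\rangle$, so that $X'$ is a genuine Albert vector. For the $(2,2)$ entry this uses $\ovx(ax)=a\NN(x)$ (from $\ovx x=\NN(x)$ and the centrality of $a$) together with $\ovx C+\ovC x=\inner{x}{C}\cdot 1_{\OO}$, which is the polarisation of the norm; the off-diagonal entries give $A+\overline{Bx}$ and $ax+C$ directly via $\overline{xy}=\ovy\,\ovx$.

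Finally I would substitute these six coordinates into $\fdet(X')=a'b'c'-a'A'\ovA'-b'B'\ovB'-c'C'\ovC'+\Tr(A'B'C')$ and check that every contribution beyond $\fdet(X)$ cancels. The terms in $a^2c\NN(x)$ and $ac\inner{x}{C}$ from $a'b'c'$ cancel against those from $c'C'\ovC'$; the $\NN(B)\NN(x)$ terms cancel once one evaluates $\Tr(\overline{Bx}\,B\,x)=2\NN(B)\NN(x)$, using $\Tr(1_{\OO})=2$ and $\overline{Bx}\,(Bx)=\NN(Bx)\,1_{\OO}$ after rebracketing under the ($3$-associative) trace; and the $\inner{x}{C}\,\NN(B)$ terms cancel once one evaluates $\Tr(\overline{Bx}\,B\,C)=\NN(B)\inner{x}{C}$, whose key step is the identity $\overline{Bx}\cdot B=\NN(B)\,\ovx$, an instance of $(u\ovv)v=\NN(v)u$.

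The main obstacle is the single genuinely octonionic cancellation: the cross term $-a\inner{A}{\overline{Bx}}$ produced by $\NN(A')$ must annihilate the term $a\Tr(ABx)$ produced by $\Tr(A'BC')$. This reduces to $\inner{A}{\overline{Bx}}=\Tr(ABx)$, which I would derive from $\inner{u}{v}=\Tr(u\ovv)$ together with the trace $3$-associativity $\Tr(A(Bx))=\Tr((AB)x)$; non-associativity of $\OO$ is harmless precisely because the trace is $3$-associative. Once this is settled, $\fdet(X')=\fdet(X)$, so $M_x$ preserves the Dickson--Freudenthal determinant and encodes an element of $\SE_6(F)$; the families $M_x'$ and $M_x''$ then follow by conjugating with the triality element $\tau$, as already noted.
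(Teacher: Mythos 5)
Your proposal is correct and follows essentially the same route as the paper: compute the explicit action of $M_x$ on $(a,b,c\mid A,B,C)$ and verify term-by-term that the extra contributions to $\fdet$ cancel. You supply somewhat more justification than the paper does (the sociability of $F[x]$, and the specific identities $\ovx\ovB\cdot B=\NN(B)\ovx$, $\inner{A}{\ovx\ovB}=\Tr(ABx)$ and trace $3$-associativity behind each cancellation), where the paper simply tabulates the images of the five terms and declares the cancellation ``visibly obvious''.
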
 

\begin{proof}
	The action of $M_x$ on $\J$ is given by
	\begin{equation*}
		\begin{array}{r@{\;\;}c@{\;\;}l}
		M_x : (a,b,c\mid A,B,C) & \mapsto & (a,b+a\NN(x)+\Tr(\ovx C),c \mid 
										A+\ovx\ovB, B, C+ax).
		\end{array}
	\end{equation*}
	The individual terms in the Dickson--Freudenthal determinant are being mapped in
	the following way:
	\begin{equation*}
		\begin{array}{r@{\;\;}c@{\;\;}l}
			abc & \mapsto & abc + a^2c\NN(x) + ac\Tr(\ovx C), \\
			-aA\ovA & \mapsto & -aA\ovA - a\Tr(ABx) - a\NN(x)\NN(B), \\
			-bB\ovB & \mapsto & -bB\ovB - a\NN(x)\NN(B) - \Tr(\ovx C)B\ovB, \\
			-cC\ovC & \mapsto & -cC\ovC - ac\Tr(\ovx C) - a^2c\NN(x), \\
			\Tr(ABC) & \mapsto & \Tr(ABC) + B\ovB\Tr(\ovx C) + 2a\NN(x)\NN(B) + a\Tr(ABx). 
		\end{array}
	\end{equation*}
	It is visibly obvious now that all the necessary terms on the right-hand side cancel out,
	so the result follows.
\end{proof}

It is obvious enough that we can also consider
\begin{equation}
	L_x = \begin{bmatrix}
		1 & 0 & 0 \\
		x & 1 & 0 \\
		0 & 0 & 1
	\end{bmatrix},\quad 
	L_x' = \begin{bmatrix}
		1 & 0 & 0 \\
		0 & 1 & 0 \\
		0 & x & 1 
	\end{bmatrix},\quad 
	L_x'' = \begin{bmatrix}
		1 & 0 & x \\
		0 & 1 & 0 \\
		0 & 0 & 1
	\end{bmatrix}
\end{equation}
for an arbitrary $x \in \OO$. 
A similar straightforward calculation as in Lemma \ref{lemma:1_Mx_det} can be performed
to show that these are also the elements of $\SE_6(F)$. 
Further in this paper we will be able to show that the actions of the elements
$M_x$, $M_x',M_x'',L_x,L_x'$ and $L_x''$ generate the whole group $\SE_6(F)$.

Finally, we consider the elements of the form 
\begin{equation}
	P_u = \begin{bmatrix}
		u & 0 & 0 \\
		0 & \ovu & 0 \\
		0 & 0 & 1
	\end{bmatrix},\quad 
	P_u' = \begin{bmatrix}
		1 & 0 & 0 \\
		0 & u & 0 \\
		0 & 0 & \ovu
	\end{bmatrix},\quad 
	P_u'' = \begin{bmatrix}
		\ovu & 0 & 0 \\
		0 & 1 & 0 \\
		0 & 0 & u
	\end{bmatrix},
\end{equation}
where $u$ is an octonion of norm one. The action of the element $P_u$ on $\J$ is given by
\begin{equation}
    \begin{array}{r@{\;\;}c@{\;\;}l}
	P_u: (a,b,c\mid A, B, C) & \mapsto & (a,b,c\mid uA, Bu, \ovu  C \ovu ).
    \end{array}
\end{equation}
It is a matter of straightforward computation to show that the elements $P_u$ preserve
the Dickson--Freudenthal determinant. Indeed, we have
\begin{equation}
    \begin{array}{r@{\;\;}c@{\;\;}l}
	abc & \mapsto & abc, \\
	a A \ovA  & \mapsto & a(uA)(\ovA \ovu ) = aN(uA) = a\NN(A).\NN(u) = a A\ovA , \\
	b B \ovB  & \mapsto & b(B u)(\ovu  \ovB ) = b\NN(B u) = b B \ovB , \\
	c C \ovC  & \mapsto & c(\ovu  C \ovu )(u \ovC  u) =
	    c\NN(\ovu  C \ovu  ) = c \NN(C).\NN(u)^2 = c C \ovC ,
    \end{array}
\end{equation}
and for the last term we get
\begin{multline}
    \Tr((uA)(Bu)(\ovu  C\ovu  )) = \Tr((\ovu  C\ovu  )(uA)(Bu)) =
    \Tr((\ovu  (C(\ovu  (uA))))(Bu)) \\
    = \Tr((\ovu  (CA))(Bu)) = \Tr((Bu)(\ovu  (CA))) = \Tr(B(u(\ovu  (CA)))) = \Tr(B(CA))  \\
    = \Tr((BC)A) = \Tr(ABC).
\end{multline}
On the other hand, it is not difficult to see that $P_u = M_{u-1} \cdot L_1 \cdot M_{u^{-1}-1} \cdot L_{-u}$,
so the fact that the matrices $P_u$ preserve the determinant follows from the calculations already done for
the elements $M_x$ and $L_x$. We also notice that the elements $P_u$ preserve
the quadratic form $\QQ_8^C$ defined on $\J_8^C$ via
\begin{equation}
	\QQ_8^C( (0,0,0\mid 0,0,C) ) = C\ovC.
\end{equation}

We finish this section by showing that the action of the elements $P_u$ on $\J_{10}^{abC}$, as $u$ ranges
through all the octonions of norm one, is that of $\Omega_8(F,\QQ_8^C)$ when $\OO$ is split.

\begin{lemma}
	\label{lemma:1_spin8plus}
	If $\OO$ is split, the actions of the elements $P_u$ on 
	$\J_8^C$, as $u$ ranges through	all the octonions of norm one, 
	generate a group of type 
	$\Omega_8^+(F)$. The action on $\J_{10}^{abC}$ is also that 
	of $\Omega_8^+(F)$.
\end{lemma}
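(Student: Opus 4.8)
The plan is to reduce everything to the $8$-dimensional action and to recognise it as a spin representation realised by orthogonal symmetries. Since $P_u$ fixes the coordinates $a,b$ and sends $(0,0,0\mid 0,0,C)$ to $(0,0,0\mid 0,0,\bar u C\bar u)$, the matrix of $P_u$ on $\J_{10}^{abC}$ is block diagonal: trivial on the $2$-space of the $(a,b,0\mid 0,0,0)$ and equal to the action on $\J_8^C$ on the complementary $8$-space. Thus the group induced on $\J_{10}^{abC}$ coincides, as a subgroup of $\GO$, with that induced on $\J_8^C$, and the second assertion follows at once from the first. It therefore suffices to analyse the maps $T_u\colon x\mapsto\bar u x\bar u$ on $(\OO,\NN)$ for $\NN(u)=1$. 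As $\OO$ is split, $\NN$ has Witt index $4$, so the ambient group is of plus type; since the text already shows $T_u$ preserves $\QQ_8^C=\NN$, we have $G:=\langle T_u\mid\NN(u)=1\rangle\leqslant\GO_8(F,\NN)$, and the goal is to prove $G=\Omega_8^{+}(F,\NN)$.

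The key step is to exhibit each $T_u$ as a product of two orthogonal symmetries in norm-$1$ vectors. For $v$ with $\NN(v)\neq0$ set $\sigma_v(x)=-\NN(v)^{-1}v\bar x v$. Using $\bar x v+\bar v x=\langle x,v\rangle\,1_{\OO}$ and $x(\bar x y)=\NN(x)\,y$ one checks that $\sigma_v$ is an isometry fixing $\langle v\rangle^{\perp}$ pointwise; it is the reflection negating $v$ when $\operatorname{char}F\neq2$, and when $\operatorname{char}F=2$ it is the orthogonal transvection $x\mapsto x+\langle x,v\rangle v$, which is an isometry precisely because $\NN(v)=1$. A direct expansion gives $T_u=\sigma_{1_{\OO}}\circ\sigma_u$ whenever $\NN(u)=1$. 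Because a single symmetry has nontrivial (quasi)determinant while the (quasi)determinant is a homomorphism, the product of two symmetries lies in $\SO_8$; and its spinor norm equals $\NN(1_{\OO})\NN(u)=1$, so $T_u\in\Omega_8^{+}(F,\NN)$ and hence $G\leqslant\Omega_8^{+}(F,\NN)$. Moreover $T_u^{-1}T_v=\sigma_u\sigma_{1_{\OO}}\sigma_{1_{\OO}}\sigma_v=\sigma_u\sigma_v$, so $G$ already contains $\sigma_u\sigma_v$ for all norm-$1$ vectors $u,v$. This uniform, sign-corrected formula for $\sigma_v$ lets the reflection and transvection cases be treated together.

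For the reverse inclusion I would invoke Section~\ref{section:orthogonal} with $2m+2=8$. Fix the hyperbolic pair given by the isotropic vectors $e_1,e_{-1}$ of the explicit basis, put $W=\langle e_1,e_{-1}\rangle^{\perp}$, a $6$-space whose form $Q_W$ has Witt index $3\geqslant2$, so that Theorems~\ref{theorem:1_omega_maximal} and~\ref{theorem:1_space_stab} apply. The strategy is to show that $G$ contains the whole point stabiliser $W\cn\Omega_6(F,Q_W)$ of $\langle e_1\rangle$ and that $G$ does not fix $\langle e_1\rangle$; by the contrapositive of Theorem~\ref{theorem:1_space_stab} a subgroup containing $W\cn\Omega_6(F,Q_W)$ that fails to stabilise $\langle e_1\rangle$ cannot be proper, forcing $G=\Omega_8^{+}(F,\NN)$. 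The non-stabilising is cheap: one norm-$1$ octonion $u$ with $\bar u e_1\bar u\notin\langle e_1\rangle$ suffices. For the stabiliser I would use the norm-$1$ octonions $u=1_{\OO}+t\,e_i$ (with $e_i$ isotropic and $\langle e_i,1_{\OO}\rangle=0$), whose $T_u$ are unipotent, to build the root (Eichler) transformations generating the unipotent radical $W$, and the products $\sigma_u\sigma_v$ with $u,v$ supported away from $e_1$ to build a copy of $\Omega_6(F,Q_W)$ inside the Levi factor.

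The main obstacle is exactly this last identification: matching the explicit unipotent maps $T_{1_{\OO}+t e_i}$ and the symmetry products $\sigma_u\sigma_v$ to a generating set of $W\cn\Omega_6(F,Q_W)$ requires a careful basis-level octonion calculation with the multiplication table, and one must keep the characteristic~$2$ case in step by working throughout with orthogonal transvections and the quasideterminant in place of reflections and the determinant. An alternative to Theorem~\ref{theorem:1_space_stab} is to prove directly that $G$ is transitive on the isotropic points and then feed Lemma~\ref{lemma:1_omega_transitive} into an orbit--stabiliser argument, but this runs into the same core difficulty, namely producing enough norm-$1$ symmetries inside $G$ to move one totally isotropic configuration to another.
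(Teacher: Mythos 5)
Your first half coincides with the paper's own proof: the reduction from $\J_{10}^{abC}$ to $\J_8^C$ (the $2$-space $\J_2^{ab}$ is fixed pointwise and the form $ab-C\ovC$ is preserved), and the identification of $C\mapsto \bar{u}C\bar{u}$ as the product of the two symmetries in $1_{\OO}$ and $u$, both of norm one, are exactly what the text does. Your uniform formula $\sigma_v(x)=-\NN(v)^{-1}v\bar{x}v$, the explicit treatment of characteristic $2$ via transvections and the quasideterminant, and the spinor-norm computation showing $T_u\in\Omega_8^+(F,\NN)$ are, if anything, more careful than the paper's version; the remark that $G$ then contains every product $\sigma_u\sigma_v$ with $\NN(u)=\NN(v)=1$ is a useful extra.

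The divergence is in the reverse inclusion $G\geqslant\Omega_8^+(F)$. The paper disposes of it in one sentence ("as $u$ ranges through all octonions of norm one, we get the action of $\Omega_8(F,\QQ_8^C)$"), implicitly appealing to the standard fact that for a form of positive Witt index the products of pairs of symmetries in vectors of equal nonzero norm generate $\Omega$. You instead propose to prove generation by the machinery of Section~\ref{section:orthogonal}: realise $W\cn\Omega_6(F,Q_W)$ inside $G$ as the stabiliser of an isotropic $1$-space and then invoke Theorems~\ref{theorem:1_omega_maximal} and~\ref{theorem:1_space_stab}. That route is legitimate and is in fact exactly how the paper later builds $\Omega_{10}^+(F)$ from the $M_x$ and $L_x$ in the run-up to Lemma~\ref{lemma:1_omega10plus}. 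However, as you concede, you have not executed the step that actually produces the unipotent radical and a copy of $\Omega_6(F,Q_W)$ from the elements $T_u$; so your argument for surjectivity is a strategy rather than a proof, and this is the one genuine gap in the proposal. It is, to be fair, a gap the paper's own assertion also leaves open; either carrying out your Eichler-transformation computation or citing the generation of $\Omega$ by such symmetry products (e.g.\ from Taylor's book, which the paper already references) would close it.
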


\begin{proof}
Consider the action on the last octonionic `co\"{o}rdinate', i.e.
$C \mapsto \ovu   C \ovu  $. We will show now that this map can be represented as a
product of two reflexions. To avoid any predicaments in characteristic $2$, we notice that
since $\langle x,y \rangle = \Tr(x\ovy )$, we get
\begin{equation}
\frac{2 \langle x, y \rangle}{\langle y,y \rangle} =
\frac{\langle x, y \rangle}{\NN(y)}.
\end{equation}
Now, the reflexion in the hyperplane orthogonal to an arbitrary element $v\in \OO$ is the map
\begin{equation}
    r_v : x \mapsto     x - \frac{\Tr(x \ovu  )}{\NN(u)} \cdot u =
	 x - \frac{ x\ovu  + u\ovx  }{\NN(u)} \cdot u =
    x - \frac{(x\ovu  )u - u\ovx u}{\NN(u)} = - \frac{u\ovx u}{\NN(u)},
\end{equation}
It is easy to see now that the given
action of $P_u$ on $\J_8^C$ is the composition $r_u \circ r_1$. As $u$ ranges through all 
octonions of norm one, we get the action of $\Omega_8(F,\QQ_8^C)$ on $\J_8^C$. Since we
assume that $\OO$ is split, the form $\QQ_8$ is of plus type, so we may denote this
group as $\Omega_8^+(F)$. When acting on $\J_{10}^{abC}$, the space $\J_2^{ab}$ is fixed
pointwise and the form $ab-C\ovC$ is
preserved, so we again get the action of $\Omega_8^+(F)$. 

\end{proof}

\section{Action of $\SE_6(F)$ on white points}

In this paper we will be mostly interested in the action of $\SE_6(F)$ on the 
white points. Let $X = (a,b,c\mid A,B,C)$ be an arbitrary white vector. 
A white vector $W$ determines the quadratic form $\fdet(X+W) - \fdet(X) = M(W,X)$ on $\J$. 
Its radical is $17$-dimensional and for any non-zero $\lambda \in F$ it coincides with the radical 
of the form determined by $\lambda W$.  Indeed,  \mbox{$\fdet(X+\lambda W) - \fdet(X) = \lambda( \fdet(X+W) - \fdet(X) )$}. 
Thus, the $17$-dimensional
space is determined by the white point $\langle W \rangle$. 

For example, for the white vector $(0,0,1\mid 0,0,0)$ the quadratic form is 
$ab - C\ovC$, whose radical is $\J_{17}^{cAB}$. For the vector
$(0,0,0\mid 0,0,D)$ with $D \neq 0 = D\ovD$ the form is 
$\hat{Q}(X) = \Tr( D (AB - c\ovC) )$ with
\mbox{$\hat{B}(X,Y) = \Tr( D ( AB' + A'B - c\ovC' - c'\ovC) )$} being its polar form,
where
$Y = (a',b',c'\mid A',B',C')$. Now $X$ is in the radical of $\hat{Q}$ if 
and only if $\hat{Q}(X) = 0$ and $\hat{B}(X,Y) = 0$ for all $Y$. 
Taking \mbox{$Y = (a',b',1\mid 0,0,0)$} gives us $\Tr(D\ovC) = 0$ and taking 
$Y = (a',b',0\mid 0,B',0)$ gives us \mbox{$\Tr(DAB' ) = \Tr((DA)B' ) = 0$}
for all $B'$, so $DA = 0$. If $Y = (a',b',0\mid A',0,0)$ then \mbox{$\Tr(D(A'B)) = \Tr((BD)A') = 0$} for all $A'$, so we get $BD = 0$. Finally, setting
$Y = (a',b',0\mid 0,0,C')$ gives us $\Tr(cD\ovC') = 0$ for all $\ovC'$, so
$cD = 0$, and thus $c = 0$. Therefore the radical is
\begin{equation}
	\left\{ (a,b,0 \mid A,B,C)\ \big|\ DA = BD = \Tr(D\ovC) = 0 \right\}.
\end{equation}
To obtain $17$-spaces determined by other ``co\"{o}rdinate'' white vectors we apply 
a suitable power of $\tau$ to these two. 

Next, we derive 
a system of conditions for an arbitrary vector $X \in \J$ to be white. 

\begin{lemma}
	An Albert vector $X = (a,b,c\mid A,B,C)$ is white if and only if the following 
	conditions hold:
	\begin{equation}
		\left.\begin{array}{r@{\;}c@{\;}l}
			A \ovA & = & bc, \\
			B \ovB & = & ca, \\
			C \ovC & = & ab, \\
			AB & = & c\ovC, \\
			BC & = & a\ovA, \\
			CA & = & b\ovB.
		\end{array}
		\right\}.
	\end{equation}
	If $X$ is white, then $\fdet(X) = 0$. 
\end{lemma}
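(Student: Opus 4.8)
The plan is to read the six identities straight off the requirement that the mixed form vanish identically. The key structural observation is that $M(Y,X)$ is $F$-linear in its first argument $Y=(d,e,f\mid D,E,F)$: inspecting the defining expression, every summand carries exactly one of the coordinates $d,e,f,D,E,F$, and there is no term free of $Y$. Hence $M(\cdot,X)$ is a linear functional on $\J$, so $X$ is white if and only if $M(Y,X)=0$ whenever $Y$ is supported on a single coordinate. I would therefore test the six coordinate directions one at a time and read off one equation from each.

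First the scalar directions. Putting $Y=(d,0,0\mid 0,0,0)$ collapses the form to $M(Y,X)=d\,(bc-A\ovA)$, and letting $d$ range over $F$ forces $A\ovA=bc$; the choices $Y=(0,e,0\mid 0,0,0)$ and $Y=(0,0,f\mid 0,0,0)$ give $B\ovB=ca$ and $C\ovC=ab$ in exactly the same way (these three are cyclically permuted by the triality element $\tau$, so only one computation is genuinely new). Next the octonionic directions. Taking $Y=(0,0,0\mid D,0,0)$ isolates $M(Y,X)=\Tr(DBC)-a\,(D\ovA+A\ovD)$. Here I would polarise the norm, using $x\ovx=\NN(x)1_{\OO}$ to get $D\ovA+A\ovD=\inner{D}{A}\,1_{\OO}$, together with $\Tr(Dx)=\inner{D}{\ovx}$, to rewrite the whole expression as the single inner product $\inner{D}{\,\overline{BC}-aA\,}$ (here $a$ is a scalar, so $\overline{aA}=a\ovA$). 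Since the polar form of $\NN$ is non-degenerate, this vanishes for all $D$ precisely when $\overline{BC}=aA$, that is $BC=a\ovA$; the directions $E$ and $F$ yield $CA=b\ovB$ and $AB=c\ovC$ by the same argument. Running the implications in both directions gives the stated equivalence, since by linearity the coordinatewise vanishing of $M(\cdot,X)$ is the same as its vanishing on all of $\J$.

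For the final assertion I would feed the six identities into $\fdet(X)=abc-aA\ovA-bB\ovB-cC\ovC+\Tr(ABC)$. The three ``diagonal'' terms each become $abc$, while $\Tr(ABC)=\Tr((AB)C)=\Tr((c\ovC)C)=c\,\Tr(\ovC C)=2c\,\NN(C)=2abc$, using $AB=c\ovC$, $\ovC C=\NN(C)1_{\OO}$, $\Tr(1_{\OO})=2$, the $3$-associativity of the trace, and $\NN(C)=ab$. Thus $\fdet(X)=abc-3abc+2abc=0$. I would stress that this computation is deliberately characteristic-free: the quick alternative $M(X,X)=3\,\fdet(X)$ (obtained by setting $Y=X$ and using the cyclic symmetry of the trace) only forces $\fdet(X)=0$ when $\mathrm{char}\,F\neq 3$, whereas the direct substitution above holds over every field.

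The only step requiring real care is the octonionic directions: correctly converting the symmetric term $-a(D\ovA+A\ovD)$ and the trace term $\Tr(DBC)$ into a single inner product $\inner{D}{\,\cdot\,}$, and keeping conjugates straight, so that non-degeneracy of the polar form can be applied to conclude $BC=a\ovA$ rather than some transposed or conjugated variant. Everything else is bookkeeping, and the whole argument is uniform across characteristics provided one avoids the $M(X,X)=3\fdet(X)$ shortcut.
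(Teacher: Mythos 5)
Your proposal is correct and follows essentially the same route as the paper: both arguments reduce to testing $M(Y,X)$ on the six coordinate directions (the paper by regrouping $M(Y,X)$ as $(bc-A\ovA)d+\cdots+\Tr(D(BC-a\ovA)+\cdots)$, you by the equivalent inner-product formulation $\inner{D}{\overline{BC}-aA}$) and then invoke non-degeneracy of the trace/polar form, and the final substitution giving $\fdet(X)=abc-3abc+2abc=0$ is identical. Your explicit remark that the shortcut $M(X,X)=3\fdet(X)$ fails in characteristic $3$ is a sensible extra precaution but does not change the argument.
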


\begin{proof}
	Let $Y = (d,e,f\mid D,E,F)$. We rewrite $M(Y,X)$ in the form
	\begin{multline*}
		M(Y,X) = (bc - A \ovA)d + (ac - B \ovB) e + (ab - C \ovC) f \\
			+ \Tr(D(BC-a\ovA) + Q(CA - b\ovB) + R(AB - c\ovC)).
	\end{multline*}
It is visibly clear now that if all the conditions in the statement are satisfied,
then $M(Y,X) = 0$. Now, taking $Y = (1,0,0\mid 0,0,0)$ forces
$bc-A\ovA = 0$. Similarly, we may take $Y = (0,1,0\mid 0,0,0)$ to get 
$ac - B\ovB = 0$ and, say, $Y = (0,0,0\mid D,0,0)$ to obtain
$\Tr(D(BC - a\ovA)) = 0$ which forces $BC - a\ovA = 0$ as $D \in \OO$ can
be arbitrary. The other conditions are proved similarly.

Finally, if $X$ is white, then we get $\Tr(ABC) = \Tr(a A\ovA) = \Tr(abc) = 2abc$. 
Also $b B\ovB = bca$, and so on. Overall we get 
\begin{equation*}
	\fdet(X) = abc - abc - bca - cab + 2abc = 0
\end{equation*}
as required. This completes the proof.
\end{proof}

Finally, we investigate the orbits of $\SE_6(F)$ on Albert vectors. One of our main
goals is to show that $\SE_6(F)$ acts transitively on white points. 

\begin{lemma}
	Suppose $X$ is an arbitrary Albert vector. Then $X$ can be mapped under the action
	of $\SE_6(F)$ to a vector of the form $(a,b,c\mid 0,0,0)$ with $(a,b,c) \neq 
	(0,0,0)$. In case when $\OO$ is split, $X$ can be mapped to precisely one of the
	 following:
	\begin{enumerate}[(i)]
		\item $(0,0,1\mid 0,0,0)$, a white vector;
		\item $(0,1,1\mid 0,0,0)$, a grey vector; or
		\item $(\lambda,1,1\mid 0,0,0)$ where $\lambda \neq 0$, a black vector.
	\end{enumerate}
	In the last case there is one orbit for each non-zero value of $\lambda$. 
\end{lemma}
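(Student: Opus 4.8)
The plan is to prove the two assertions separately: first that every non-zero Albert vector is carried to a purely diagonal vector $(a,b,c\mid 0,0,0)$ by $\SE_6(F)$, and then, in the split case, to normalise the diagonal and read off the orbits from the colour and the determinant.

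For the reduction to diagonal form I would treat the generators $M_x,M_x',M_x'',L_x,L_x',L_x''$ as octonionic ``transvections'', computing each action on $\J$ exactly as was done for $M_x$ in Lemma~\ref{lemma:1_Mx_det}. Each shifts one off-diagonal coordinate additively, controlled by a diagonal pivot: $M_x$ sends $C\mapsto C+ax$ (so $x=-a^{-1}C$ clears $C$ when $a\neq 0$) and fixes $B$, the computed action of $L_x''$ sends $B\mapsto B+a\bar x$ and fixes $C$, and $L_x'$ sends $A\mapsto A+\bar x c$ and does not revive $B,C$ once they are zero. First I would arrange a non-zero diagonal entry: if $a=b=c=0$ some octonion coordinate is non-zero, and after a power of $\tau$ an element $M_x$ with $\Tr(\bar x C)\neq 0$ (possible as the polar form is non-degenerate) produces one. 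After permuting by $\delta,\tau$ so that $a\neq 0$, I would clear $C$ by $M_x$ and then $B$ by $L_x''$; since $M_x$ fixes $B$ and, once $C=0$, $L_x''$ no longer disturbs $C$, both vanish simultaneously, leaving $(a,b,c\mid A,0,0)$. Finally $A$ is cleared using a pivot on $b$ or $c$; in the degenerate case $b=c=0$ one first applies $M_x'$ (whose action changes $c$ by $\Tr(\bar x A)$) to manufacture a pivot $c\neq 0$ without reviving $B,C$, then clears $A$ by $L_x'$. Invertibility of the group elements forces $(a,b,c)\neq(0,0,0)$.

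For the split normalisation the key step is to realise inside $\SE_6(F)$ the full diagonal torus acting as $(a,b,c\mid 0,0,0)\mapsto(\alpha a,\beta b,\gamma c\mid 0,0,0)$ for every $\alpha\beta\gamma=1$. I would consider the linear map $X\mapsto\bar D^{\T}XD$ with $D=\diag(u,\bar u,1)$, which is well-defined since $D$ lies over the sociable subalgebra $F[u]$ generated by $u$. Repeating the trace computation of the $P_u$ case, but now retaining the factors $u\bar u=\NN(u)$, shows it scales $\fdet$ by $\NN(u)^2$ and acts on diagonals by $(a,b,c)\mapsto(\NN(u)a,\NN(u)b,c)$. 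Composing with the analogous map for $\diag(1,v,\bar v)$ where $\NN(v)=\NN(u)^{-1}$ then yields a genuine element of $\SE_6(F)$ realising $(a,b,c)\mapsto(\nu a,b,\nu^{-1}c)$; here the split hypothesis is essential, since it makes $\NN$ surjective and hence $\nu$ arbitrary, supplying the non-square scalings. Conjugating by $\tau$ and multiplying, these generate the whole torus $\{\alpha\beta\gamma=1\}$.

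With the torus in hand the classification is immediate once I note, via the white-vector criterion, that on diagonal vectors the colour is exactly the number of non-zero entries among $a,b,c$ (one = white, two = grey, three = black) and that $\fdet=abc$. I would send a black vector to $(abc,1,1\mid 0,0,0)$ by the torus element $(bc,b^{-1},c^{-1})$, a grey vector to $(0,1,1\mid 0,0,0)$ by the same recipe, and a white vector to $(1,0,0\mid 0,0,0)$ and thence to $(0,0,1\mid 0,0,0)$ by a power of $\tau$. Distinctness follows because colour and $\fdet$ are $\SE_6(F)$-invariants: white, grey and black are separated by colour (the white/grey separation over $\FF_2$ being precisely the $17$- versus $9$-dimensional radical computation already carried out), while within the black stratum the invariant $\fdet=\lambda$ separates the orbits and is attained by $(\lambda,1,1\mid 0,0,0)$. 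The main obstacle I anticipate is the bookkeeping in the diagonalisation—ensuring cleared coordinates are not revived and handling the pivot-free subcase—together with verifying that the general-norm diagonal map still scales $\fdet$ by $\NN(u)^2$, which is exactly where non-associativity and the split hypothesis genuinely enter.
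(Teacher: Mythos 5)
Your proposal is correct, and its skeleton---reduce an arbitrary Albert vector to diagonal form with the transvection-type generators, then separate the orbits by the invariance of $\fdet$ and of colour, using the $17$- versus $9$-dimensional radical computation to keep white and grey apart over $\mathbb{F}_2$---is the same as the paper's. You clear the octonion coordinates in a different order (pivoting on $a$ and clearing $C$, then $B$, then $A$, where the paper pivots on $c$ and clears $B$, $A$ and finally $C$), but both orders require, and you correctly supply, the same bookkeeping about cleared coordinates not being revived. Where you genuinely diverge, to your advantage, is the final standardisation of $(a,b,c\mid 0,0,0)$: the paper disposes of this in one sentence by invoking ``$\tau$, $P_u$ and $P_v''$'', yet as defined those elements require $\NN(u)=1$ and then fix the diagonal coordinates, so the paper is tacitly using diagonal octonion matrices of arbitrary norm without constructing them. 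Your explicit construction---composing the similarity $X\mapsto\ovD^{\T}XD$ for $D=\diag(u,\ovu,1)$, which scales $\fdet$ by $\NN(u)^{2}$ and acts on diagonals as $(a,b,c)\mapsto(\NN(u)a,\NN(u)b,c)$, with the analogous map for $\diag(1,v,\ovv)$ where $\NN(v)=\NN(u)^{-1}$, to obtain a genuine element of $\SE_6(F)$ realising $(a,b,c)\mapsto(\nu a,b,\nu^{-1}c)$ and hence, with its $\tau$-conjugates, the whole torus $\{\diag(\alpha,\beta,\gamma):\alpha\beta\gamma=1\}$---is well defined because $F[u]$ is sociable by alternativity, and it makes completely transparent the role of the surjectivity of $\NN$ in the split case, a dependence the paper only acknowledges in a remark after its proof. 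The one point to state explicitly in a final write-up is that in the pivot-free subcase you apply $M_x'$ only after $b=0$ and $B=C=0$ have been arranged, so that it changes $c$ by $\Tr(\ovx A)$ without disturbing the already-cleared coordinates; your ordering does guarantee this.
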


\begin{proof}
	These vectors are indeed in the different orbits, except possibly for the white 
	and grey vectors, since they have different values of $\fdet$. We have already
	shown that these particular 
	white and grey vectors are in different orbits in case of any field. 
	
	First, we show that each orbit of $\SE_6(F)$ contains an Albert vector of the form
	$(a,b,c\mid 0,0,0)$. Suppose that $X = (a,b,c\mid A,B,C)$ is non-zero. If
	\mbox{$(a,b,c) = (0,0,0)$}, then after applying the triality element $\tau$ a suitable number
	of times we may assume $C \neq 0$. Consider the action of the element $L_x$ on
	the Albert vector $(0,0,0\mid A,B,C)$: 
	\begin{equation*}
		\begin{array}{r@{\;\;}c@{\;\;}l}
			L_x: (0,0,0\mid A,B,C) & \mapsto & (\Tr(Cx),0,0\mid A,B+\ovA x, C),
		\end{array}
	\end{equation*}
	so we are allowed to choose orbit representatives with $(a,b,c) \neq (0,0,0)$. 
	
	As before, using a suitable power of $\tau$, we may assume $c \neq 0$. Now we apply 
	the element $M_x$ with $x = -c^{-1}B$ to $X$, which gives the vector
	$(a,b,c\mid A,0,C)$, where the `co\"{o}rdinate' $c$ stays the same, while 
	$a,b,A,C$ are possibly different. Next,  $(a,b,c\mid A,0,C)$ is being mapped
	to $(a,b,c\mid 0,0,C)$ under the action of $L_x$ with
	$x = -c^{-1}A$, where the value of $c$ stays the same while the values of 
	$a,b,C$ may be adjusted. 
	
	If $a=b=0$, $C \neq 0$, then we apply the element $L_x$ with $x$ such that 
	\mbox{$\Tr(Cx)\neq 0$} to get $(\Tr(Cx),0,c\mid 0,0,C)$, i.e.
	we may assume that \mbox{$a \neq 0$}. With the latter assumption we apply the element
	$M_x$ with $x = -a^{-1} C$ to \mbox{$(a,b,c\mid 0,0,C)$} to get a vector of the form
	$(a,b,c\mid 0,0,0)$ with the value of $b$ being adjusted. 
	
	Finally, we use the elements $\tau$, $P_u$ and $P_v''$ to standardise $(a,b,c\mid 0,0,0)$ to one the forms in the statement. 
\end{proof}

Note that the last part of the proof of this lemma used the fact that the map
$\NN : \OO \rightarrow F$ is onto, which is the case when $\OO$ is split. 
However, this is not true in any octonion algebra, which possibly leads 
to a bigger number of orbits. A vector of the form $(a,b,c\mid 0,0,0)$ is 
white if and only if precisely one of the $a,b,c$ is non-zero, so we get 
the transitive action of $\SE_6(F)$ on white points regardless of the 
chosen octonion algebra. 

Furthermore, we used the fact that $\NN$ is a non-singular
quadratic form on $\OO$, i.e. provided $C \neq 0$, the map $x \mapsto \Tr(Cx)$ is 
surjective. This follows from the fact that the norm is a non-singular quadratic form on
$\OO$, something that should hold for any octonion algebra.

Later we will use the transitivity on white points to calculate the group order
in case $F = \Fq$ by finding the stabiliser of a white point and calculating the
number of white points in case of a finite field. 

\begin{lemma}
	Let $\OO$ be an arbitrary octonion algebra over $F$.
	Let $X \in \J$ be white and let $\J_{17}$ be the 
	$17$-dimensional subspace of $\J$ determined by $X$. The stabiliser in
	$\SE_6(F)$ of $\langle X \rangle$, and even of $X$, is transitive on the white points 
	spanned by the vectors in $\J_{17} \setminus \langle X \rangle$ (there are no such
	white points when $\OO$ is non-split). It is also
	transitive on the white points spanned by the vectors in $\J \setminus
	\J_{17}$. 
\end{lemma}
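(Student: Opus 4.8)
The plan is to use the transitivity of $\SE_6(F)$ on white points to reduce, by conjugacy, to the concrete white vector $X=(0,0,1\mid 0,0,0)$; then $\J_{17}=\J_{17}^{cAB}$ is the radical of the quadratic form $q_X(Y)=\fdet(Y+X)-\fdet(Y)$, which in the coordinates $Y=(a,b,c\mid A,B,C)$ equals $ab-C\ovC$. Applying the whiteness conditions inside $\J_{17}$ shows that a white vector $(0,0,c\mid A,B,0)$ is exactly one with $\NN(A)=\NN(B)=0$ and $AB=0$. First I would assemble a workable supply of elements of the \emph{vector} stabiliser of $X$: among our generators, those fixing $X$ are exactly the ones whose bottom row is $(0,0,r)$ with $\NN(r)=1$, namely $M_x,M_x',L_x,L_x''$ together with the torus-type elements $P_u,P_u',P_u''$ and $\delta$. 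Since every element used below fixes $X$ itself, proving transitivity for the stabiliser of $X$ gives the stronger assertion ``even of $X$''. I would then record their induced actions on $\J_{17}$: $M_x$ sends $A\mapsto A+\ovx\,\ovB$, $L_x$ sends $B\mapsto B+\ovA x$, the elements $M_x'$ and $L_x''$ translate $c$ by $\Tr(\ovx A)$ and by $\Tr(Bx)$ respectively, $P_u'$ acts on the $A$-slot by $A\mapsto\ovu A\ovu$, and $\delta$ interchanges $A\leftrightarrow\ovB$; each leaves $\langle X\rangle$ and $\J_{17}$ invariant, as it must since $\J_{17}$ is determined by $X$.

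For $\J_{17}\setminus\langle X\rangle$, the non-split case is immediate: there are no non-zero isotropic octonions, so $\NN(A)=\NN(B)=0$ forces $A=B=0$ and the only white vectors in $\J_{17}$ lie in $\langle X\rangle$. In the split case I would reduce an arbitrary white $(0,0,c\mid A,B,0)$ with $(A,B)\neq(0,0)$ to the fixed representative $(0,0,0\mid e,0,0)$ (with $e$ a chosen isotropic octonion) in four steps: apply $\delta$ if necessary so that $A\neq 0$; use $L_x$ to clear $B$, which is possible because $AB=0$ places $B$ in $\ovA\OO$ by Lemma \ref{lemma:1_octonion_annihilator}; use $M_x'$ to clear $c$, since $x\mapsto\Tr(\ovx A)$ is onto for $A\neq 0$; and finally move $A$ onto $e$ by the action of the $P_u'$, which on the $A$-coordinate is the $\Omega_8^+(F)$-action of Lemma \ref{lemma:1_spin8plus}, transitive on non-zero isotropic octonions by Lemma \ref{lemma:1_omega_transitive}. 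At each step the previously achieved zeros are preserved, so the reduction terminates at the single point $\langle(0,0,0\mid e,0,0)\rangle$.

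For $\J\setminus\J_{17}$ I would first note that any white $Y$ satisfies $C\ovC=ab$, hence $q_X(Y)=ab-C\ovC=0$, so $Y\notin\J_{17}$ has non-zero isotropic residue in $\J/\J_{17}$. I would then reduce $Y$ directly to the representative point $\langle(1,0,0\mid 0,0,0)\rangle$. If $a=b=0$ (so $C\neq 0$), one application of $L_x$ makes the $a$-coordinate non-zero because $x\mapsto\Tr(Cx)$ is onto; so we may assume $a\neq 0$. Then $M_x$ with $x=-a^{-1}C$ clears $C$, and the whiteness conditions now force $b=0$, $A=0$ and $\NN(B)=ca$, leaving $Y=(a,0,c\mid 0,B,0)$; finally $L_x''$ with $x=-a^{-1}\ovB$ clears $B$ and $c$ simultaneously (the identity $\Tr(B\ovB)=2\NN(B)$ makes the two cancellations consistent), producing $a\cdot(1,0,0\mid 0,0,0)$. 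As none of these steps uses splitness, this argument works over any octonion algebra and completes the proof.

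The hard part will be the interaction between octonion non-associativity and the ``annihilator'' reduction $AB=0\Rightarrow B\in\ovA\OO$ in the first case, together with the verification that the $P_u'$ really induce $\Omega_8^+$ on the $A$-slot rather than on $C$; both are forced by Lemmas \ref{lemma:1_octonion_annihilator} and \ref{lemma:1_spin8plus}, but the bookkeeping of which generators fix $X$ and exactly how they move $A$, $B$ and $c$ must be carried out carefully so that the successive reduction steps do not undo one another.
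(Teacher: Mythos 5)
Your proof is correct and follows essentially the same route as the paper's: reduce to $X=(0,0,1\mid 0,0,0)$, clear co\"ordinates inside $\J_{17}^{cAB}$ using $M_x,L_x,M_x',L_x''$ and $\delta$, finish the first part with the $\Omega_8^+(F)$-action of the norm-one diagonal elements on one octonion slot (the paper normalises to $B\neq 0$, clears $A$ with $M_x$, and uses $P_u''$; you mirror this with $A\neq 0$, $L_x$ and $P_u'$ --- both work, and your ordering correctly keeps $M_x'$ from disturbing $A$ since $B$ has already been cleared), and then push a white vector outside $\J_{17}$ onto $(1,0,0\mid 0,0,0)$ via $M_{-a^{-1}C}$ followed by $L_{-a^{-1}\ovB}''$, exactly as in the paper. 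The one small omission is in the second part: your case analysis treats $a=b=0$ and then says ``we may assume $a\neq 0$,'' which skips the sub-case $a=0$, $b\neq 0$ (e.g.\ with $C=0$, which still lies outside $\J_{17}^{cAB}$); the paper disposes of it with a single application of $\delta$, which is already in your list of generators fixing $X$, so the fix is immediate.
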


\begin{proof}
	Without loss of generality assume $X = (0,0,1\mid 0,0,0)$. As we know, the white 
	point $\langle X \rangle$ determines the $17$-space $\J_{17}^{cAB}$. We also note that
	$X$ is stabilised by the actions of the elements $M_x$, $L_x$, $M_x'$ and $L_x''$. Those
	act on the elements in $\J_{17}^{cAB}$ in the following way:
	\begin{equation*}
		\begin{array}{r@{\;\;}c@{\;\;}l}
			M_x : (0,0,c\mid A,B,0) & \mapsto & (0,0,c\mid A+\ovx \ovB,B,0), \\
			L_x : (0,0,c\mid A,B,0) & \mapsto & (0,0,c\mid A,B+\ovA x,0), \\
			M_x' : (0,0,c\mid A,B,0) & \mapsto & (0,0,c+\Tr(\ovx A) \mid A,B,0), \\
			L_x'' : (0,0,c\mid A,B,0) & \mapsto & (0,0,c+\Tr(B x) \mid A,B,0).
		\end{array}
	\end{equation*}
	It follows that a general white vector $(0,0,c\mid A,B,0) \in \J_{17}^{cAB}\setminus
	 \langle X
	\rangle$ can easily be mapped to $(0,0,0\mid A,B,0)$ using the action of $M_x'$ or 
	$L_x''$ for some suitable $x \in \OO$. A vector $(0,0,0\mid A,B,0)$ is white if
	$(A,B) \neq (0,0)$ and $A\ovA = B\ovB = AB = 0$. It is obvious enough that 
	$\J_{17}^{cAB} \setminus \langle X \rangle$ does not contain white vectors if $\OO$ is not split,
	so we only need to show transitivity on the corresponding white points in case
	when $\OO$ is split.
	
	If $B = 0$ then evidently $A \neq 0$ and so we can apply the duality element $\delta$
	to obtain a white vector of the form $(0,0,0\mid A,B,0)$ with $B \neq 0$. 
	If now $A \neq 0$, we act by $M_x$ to obtain $(0,0,0\mid A+\ovx \ovB,B,0)$. Our 
	aim is to show that there exists such $x \in \OO$ that $A+\ovx \ovB = 0$. 
	Denote $U = \left\{ y \in \OO\ \big|\ \ovy B = 0\right\}$. Since for all
	$x \in \OO$ we have $(\ovx \ovB) B = \ovx (\ovB B) = 0$, we conclude
	that $\OO \ovB \leqslant U$. Furthermore, we know that both
	subspaces are four-dimensional, so $\OO\ovB = U$. As $AB = 0$, we have $A \in U$, and
	therefore there exists $y = \ovx\ovB \in U$ such that $A+y = 0$.
	
	Now, the elements $P_u''$ with $\NN(u)=1$ act on the Albert vectors of the form 
	$(0,0,0\mid 0,B,0)$ as
	\begin{equation*}
		\begin{array}{r@{\;\;}c@{\;\;}l}
			(0,0,0\mid 0,B,0) & \mapsto & (0,0,0\mid 0,\ovu B \ovu,0),
		\end{array}
	\end{equation*}
	and as $u$ ranges through all the octonions of norm $1$ the action generated is that
	of $\Omega_8^+(F)$ which in case when $\OO$ is split is transitive on isotropic vectors,
	 i.e. those with $B\ovB = 0$.
	 It follows that $\SE_6(F)$ is indeed transitive on the white
	 points spanned by the vectors in $\J_{17}^{cAB} \setminus \langle X \rangle$.
	 
	 To show the transitivity on white points spanned by the vectors in 
	 $\J\setminus \J_{17}^{cAB}$ we prove that every white point 
	 spanned by a white vector $(a,b,c\mid A,B,C)\in \J\setminus \J_{17}^{cAB}$ 
	 can be mapped to the white point spanned by $(1,0,0\mid 0,0,0)$. Note that
	 we require $(a,b,C) \neq (0,0,0)$. 
	 
	 In case $(a,b) = (0,0)$ we choose $x\in \OO$ such that $\Tr(Cx) \neq 0$ and
	 apply the element $L_x$, which maps our vector $(0,0,c\mid A,B,C)$ to
	 $(\Tr(Cx),0,c\mid A,B+\ovA x,C)$. If, on the other hand, $a = 0$ and 
	 $b \neq 0$, we apply $\delta$. Hence, we may assume that we deal with a vector
	 $(a,b,c\mid A,B,C)$ with $a \neq 0$. Take $x = -a^{-1}C$ and act by the element
	 $M_x$: 
	 \begin{equation*}
	 	\begin{array}{r@{\;\;}c@{\;\;}l}
	 		M_x : (a,b,c\mid A,B,C) & \mapsto & 
	 				(a,b+ aa^{-2} C\ovC - \Tr(a^{-1}\ovC C),c \mid 
	 						A - a^{-1}\ovC \ovB, B, 0).
	 	\end{array}
	 \end{equation*}
	 The whiteness conditions imply $C\ovC = ab$ and $BC = a\ovA$, so additionally we get 
	 $b+ aa^{-2} C\ovC - \Tr(a^{-1}\ovC C) = b+b-\Tr(b) = 0$ and 
	 $A - a^{-1}\ovC \ovB = A - A = 0$. This means that the given $M_x$ acts on the elements
	 of $\J\setminus \J_{17}^{cAB}$ in the following way:
	 \begin{equation*}
	 	\begin{array}{r@{\;\;}c@{\;\;}l}
	 		M_x : (a,b,c\mid A,B,C) & \mapsto & 
	 				(a,0,c \mid 
	 						0, B, 0),
	 	\end{array}
	 \end{equation*}
	 where $a\neq 0$. It is still white, so $B\ovB = ca$.
	 Finally, we act by $L_y''$ with $y = -a^{-1}\ovB$:
	 \begin{equation*}
	 	\begin{array}{r@{\;\;}c@{\;\;}l}
	 		L_y'': (a,0,c\mid 0,B,0) & \mapsto & 
	 			(a,0,0 \mid 0,0,0), 
	 	\end{array}
	 \end{equation*}
	 where $a \neq 0$. In other words, any white point spanned by an element
	 in $\J\setminus\J_{17}^{cAB}$ can be mapped by the action of the stabiliser
	 of $\langle X\rangle$ to the white point spanned by $(1,0,0\mid 0,0,0)$. 
\end{proof}

\begin{lemma}
	\label{lemma:1_white_primitive}
	The action of\/ $\SE_6(F)$ on white points is primitive.
\end{lemma}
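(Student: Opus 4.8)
The plan is to follow the congruence-based strategy already used in the proof of Theorem~\ref{theorem:1_omega_maximal}. Since $\SE_6(F)$ acts transitively on white points, primitivity is equivalent to the non-existence of a proper nontrivial $\SE_6(F)$-congruence on the set of white points. I fix $X = (0,0,1\mid 0,0,0)$, so that $\langle X\rangle$ determines the $17$-space $\J_{17}^{cAB}$, and I recall from the preceding lemma that the stabiliser $G_X$ of $\langle X\rangle$ (indeed of $X$) is transitive on the white points spanned by vectors in $\J_{17}^{cAB}\setminus\langle X\rangle$ --- the \emph{collinear} points --- and, separately, on those spanned by vectors in $\J\setminus\J_{17}^{cAB}$ --- the \emph{non-collinear} points.

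First I would dispose of the non-split case. When $\OO$ is non-split there are no white points in $\J_{17}^{cAB}\setminus\langle X\rangle$, so $G_X$ is transitive on the set of all white points other than $\langle X\rangle$; combined with transitivity of $\SE_6(F)$ on white points, this makes the action $2$-transitive, and a $2$-transitive action (on more than one point) is automatically primitive.

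For the split case the action has rank $3$, and I would argue by congruences. Suppose $\sim$ is a nontrivial $\SE_6(F)$-congruence and let $B$ be the class containing $\langle X\rangle$, with $B\neq\{\langle X\rangle\}$. Because $G_X$ preserves $B$ and is transitive on each of its two nontrivial orbits, $B$ must contain at least one of the two orbits in full. The crux is then a \emph{bridging step}: I must show that containing one orbit forces containing the other, after which $G_X$-transitivity forces $B$ to be the whole set of white points and $\sim$ to be universal. For this I would first record that collinearity is a \emph{symmetric} relation on white points, that is, $\langle Z\rangle\subseteq\J_{17}^{\langle X\rangle}$ if and only if $\langle X\rangle\subseteq\J_{17}^{\langle Z\rangle}$; this follows because $\J_{17}^{\langle X\rangle}$ is the radical of the quadratic form $Z\mapsto M(X,Z)$ and the resulting pairing between two white points is symmetric.

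The bridging step then reduces to two existence statements, each proved by exhibiting an explicit white vector. If $B$ contains the non-collinear orbit, then given any collinear white point $\langle R\rangle$ I must produce a white point $\langle Q\rangle$ non-collinear to both $\langle X\rangle$ and $\langle R\rangle$; the chain $\langle R\rangle\sim\langle Q\rangle\sim\langle X\rangle$, using symmetry of collinearity and transitivity of $G_Q$ on its own non-collinear orbit (together with $X\in B$), then puts $\langle R\rangle$ into $B$. If instead $B$ contains the collinear orbit, then given a collinear $\langle R\rangle\in B$ I must produce a white point $\langle Q\rangle$ collinear to $\langle R\rangle$ but non-collinear to $\langle X\rangle$, which analogously forces the non-collinear orbit into $B$. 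I expect these two existence claims to be the main obstacle: they are where the split structure is essential, and I would establish them concretely by moving $\langle R\rangle$ to a standard isotropic form such as $(0,0,0\mid 0,B,0)$ with $B\bar B=0$, and then using the $4$-dimensional annihilators of isotropic octonions from Lemma~\ref{lemma:1_octonion_annihilator} together with the transitivity of the $\Omega_8^+(F)$-action on isotropic vectors from Lemma~\ref{lemma:1_spin8plus} to locate a suitable $Q$. Everything else is routine bookkeeping with blocks and the two $G_X$-orbits.
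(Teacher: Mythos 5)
Your proposal is correct and follows essentially the same route as the paper: two\mbox{-}transitivity disposes of the non-split case, and in the split case a block containing $\langle X\rangle$ must absorb one of the two nontrivial $G_X$-orbits, after which a bridging configuration (a chain through a third white point, using the stabiliser's transitivity on its own collinear and non-collinear orbits) forces it to absorb the other. The only difference is that the paper settles the two existence claims you flag as the main obstacle simply by exhibiting the explicit vectors $(0,0,0\mid e_0,0,0)$, $(0,1,0\mid 0,0,0)$ and $(1,0,0\mid 0,0,0)$, whose mutual collinearity relations follow at once from the earlier radical computations, so no appeal to annihilators or to $\Omega_8^+(F)$-transitivity is needed there.
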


\begin{proof}
	From the previous Lemma it follows that if $\OO$ is non-split, then the action of 
	$\SE_6(F)$ on white points is $2$-transitive and hence primitive. It remains to prove
	the statement in the case when $\OO$ is split.

	Suppose $X,Y \in \J$ are white vectors such that $\langle X \rangle \neq
	\langle Y \rangle$. Define $\sim$ to be an $\SE_6(F)$-congruence on white points and
	let $\langle X\rangle \sim \langle Y\rangle$. Our aim is to show that this generates 
	the universal congruence. 
	Since for $\OO$ split the action on the white vectors is transitive, we may assume
	$X = (0,0,1 \mid 0,0,0)$. As mentioned in the beginning of this section, 
	$\langle X\rangle$ 
	determines the $17$-dimensional space $\J_{17}^{cAB}$. We now distinguish
	two cases.
	
	If $Y \in \J_{17}^{cAB}$, then acting by the stabiliser of $\langle X\rangle$ 
	we get $\langle X\rangle \sim \langle \hat{Y}\rangle $ for all white 
	$\hat{Y} \in \J_{17}^{cAB}\setminus \langle X \rangle$. Take
	$\hat{Y} = (0,0,0\mid e_0,0,0) \in \J_{17}^{cAB}$ and 
	\mbox{$\hat{X} = (0,1,0\mid 0,0,0)
	\not\in \J_{17}^{cAB}$}. As we see from the earlier calculations, both
	$X$ and $\hat{X}$ are in the $17$-space determined by $\langle \hat{Y} \rangle$. 
	Acting by the
	stabiliser of $\langle \hat{Y}\rangle$ 
	we map $\langle X\rangle $ to $\langle \hat{X} \rangle $,
	and so ensure $\langle \hat{Y} \rangle \sim \langle \hat{X} \rangle$,
	and so we have the chain $\langle X\rangle \sim \langle\hat{Y}\rangle \sim 
	\langle\hat{X}\rangle$. To get $\langle X\rangle \sim \langle\hat{X}\rangle$ for 
	all white $\hat{X}$ outside $\J_{17}^{cAB}$, we again act by the stabiliser of $\langle
	X\rangle$.
	It follows that $\langle X \rangle$ is congruent to any white point generated by a 
	vector in $\J$, and so we get the universal congruence in this case.
	
	On the other hand, if $Y$ lies outside of $\J_{17}^{cAB}$, then we get $\langle X
	\rangle\sim \langle\hat{Y}\rangle$
	for all white $\hat{Y} \in \J\setminus \J_{17}^{cAB}$ since the stabiliser of 
	$\langle X\rangle$ is transitive on the white points spanned by those. 
	In particular, we may take $\hat{Y} = (1,0,0\mid 0,0,0)$.
	Acting by the stabiliser of $\langle\hat{Y}\rangle$ on both sides in 
	$\langle X\rangle \sim \langle\hat{Y}\rangle$, we map 
	$\langle X\rangle$ to $\langle \hat{X}\rangle$ with $\hat{X} = (0,0,0\mid e_0,0,0)$.
	Note that both $X$ and $\hat{X}$ are not 
	in $\J_{17}^{aBC}$ which is the $17$-space determined by $\hat{Y}$, But 
	$\hat{X} \in \J_{17}^{cAB}$ and by transitivity we get $\langle X\rangle 
	\sim \langle \hat{X} \rangle$. Again,
	we act by the stabiliser of $\langle X\rangle $ to ensure $\langle X\rangle  \sim
	\langle \hat{X} \rangle$ for all white points $\langle \hat{X} \rangle$ spanned by 
	$\hat{X} \in \J_{17}^{cAB}$, i.e. our $\SE_6(F)$-congruence is trivial in this case 
	as well. 
	
\end{proof}

\section{White points in the case of a finite field}

In this section $F$ is a finite field of $q$ elements, that is, $F = \Fq$. 
Our aim is to count the white points in this case. 

\begin{theorem}
	\label{theorem:white_points_count}
	If $F = \Fq$, then there are precisely
	\begin{equation}
		\frac{(q^{12}-1) (q^9-1)}{(q^4-1)} 
	\end{equation}
	white vectors in $\J$. 
\end{theorem}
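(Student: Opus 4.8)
The plan is to count the white vectors directly, \emph{without} invoking the order of $\SE_6(F)$ (which is to be deduced from this count later, so using it here would be circular). By the whiteness criterion established above, a nonzero $X = (a,b,c\mid A,B,C)$ is white exactly when the six relations $A\ovA = bc$, $B\ovB = ca$, $C\ovC = ab$, $AB = c\ovC$, $BC = a\ovA$, $CA = b\ovB$ all hold. I would stratify the white vectors by the vanishing pattern of the scalar part, using the four disjoint cases $c\neq 0$; $c=0,\ b\neq 0$; $c=b=0,\ a\neq 0$; and $a=b=c=0$. The triality element $\tau$ permutes the six relations cyclically, so the first three strata are of the same nature and one verification handles all three.

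First I would treat the generic stratum $c\neq 0$. Here the data $(c,A,B)\in F^\times\times\OO\times\OO$ is free and determines the white vector uniquely: setting $a = \NN(B)/c$, $b = \NN(A)/c$ and $C = \ovB\,\ovA/c$, the first three relations and $AB = c\ovC$ hold by inspection, while $BC = a\ovA$ and $CA = b\ovB$ follow at once from the composition identities $x(\ovx y)=\NN(x)y$ and $(x\ovy)y=\NN(y)x$. Conversely, relations (1), (2) and (4) force exactly these values, so this stratum is in bijection with $F^\times\times\OO^2$ and contributes $(q-1)q^{16}$ white vectors. The same computation run ``one slot over'' handles $c=0,\ b\neq 0$ (free data $(b,A,C)$ with $A$ isotropic, since $\NN(A)=bc=0$) and $c=b=0,\ a\neq 0$ (free data $(a,B,C)$ with $B,C$ isotropic); the only new ingredient is the number $q^7+q^4-q^3$ of isotropic vectors of the split form $\NN$ on $\OO$, i.e.\ of $O_8^+(q)$. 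Writing $z = q^7+q^4-q^3$, these three strata contribute $(q-1)q^{16}$, $(q-1)zq^{8}$ and $(q-1)z^{2}$ respectively.

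The remaining stratum $a=b=c=0$ is the crux, and I expect it to be the main obstacle. Here the relations collapse to $\NN(A)=\NN(B)=\NN(C)=0$ together with $AB=BC=CA=0$, so I must count the nonzero triples of isotropic octonions that annihilate one another pairwise. The tool is Lemma~\ref{lemma:1_octonion_annihilator}: for nonzero isotropic $A$ the solution sets $\{B : AB = 0\} = \ovA\OO$ and $\{C : CA = 0\} = \OO\ovA$ are totally isotropic $4$-spaces, so the constraints $AB=0$ and $CA=0$ pin $B$ and $C$ into fixed $4$-spaces, and the final condition $BC=0$ becomes a question about the multiplication $\ovA\OO\times\OO\ovA\to\OO$. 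Counting the pairs with $BC=0$ requires the dimensions of intersections such as $\OO\ovA\cap\ovB\OO$, and these genuinely depend on finer data — in particular on whether $\Tr(A)=0$, since $A\in\ovA\OO$ precisely when $\Tr(A)=0$. I would organise this by recognising the left and right annihilator $4$-spaces as the two families of maximal totally isotropic subspaces in the $D_4$ triality geometry carried by the split octonions, and count incident triples accordingly, splitting the nonzero isotropic $A$ according to the value of $\Tr(A)$.

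Finally I would add the four contributions and simplify. The three generic strata sum to $(q-1)(q^{16}+q^{8}z+z^{2})$, and adding the degenerate stratum and collecting terms should yield $(q^{12}-1)(q^9-1)/(q^4-1)$, as claimed. The bookkeeping in the summation is routine; all of the difficulty is concentrated in the pairwise-annihilation count of the last stratum.
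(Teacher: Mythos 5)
Your first three strata are sound: parametrising the white vectors with $c\neq 0$ by the free data $(c,A,B)\in F^{\times}\times\OO^{2}$ is exactly right, and the triality shifts to the strata $c=0,\ b\neq 0$ and $b=c=0,\ a\neq 0$ correctly give $(q-1)q^{16}$, $(q-1)zq^{8}$ and $(q-1)z^{2}$ with $z=q^{7}+q^{4}-q^{3}$. But the argument is not a proof, because the stratum $a=b=c=0$ --- which you yourself identify as the crux --- is never actually counted; you only describe a strategy for counting it. This omission is not routine bookkeeping. Subtracting your three strata from the target $(q^{12}-1)(q^{9}-1)/(q^{4}-1)=q^{17}+q^{13}+q^{9}-q^{8}-q^{4}-1$ shows that the missing stratum must contribute
\[
q^{14}+3q^{11}-2q^{10}+2q^{8}-3q^{7}+q^{6}-q^{4}-1,
\]
a polynomial with no clean product structure. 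That is a warning sign that the incidence count of pairwise-annihilating isotropic triples really does require the delicate case analysis you defer: the dimension of intersections such as $\OO\ovA\cap\ovB\OO$ varies with the relative position of $A$ and $B$ inside the $D_4$ geometry, and organising the count by $\Tr(A)$ alone will not suffice. Until that stratum is computed and the total verified, the theorem is unproved.

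The paper sidesteps this difficulty by a different stratification, $0<\J_{10}^{abC}<\J_{26}^{abABC}<\J$, and --- the key idea you are missing --- by using the group action instead of raw incidence geometry in the awkward stratum. For a white vector $(a,b,0\mid A,B,C)$ with $A\neq 0$, the elements $L_x$ give an explicit bijection between the white vectors of the form $(*,*,0\mid A,B,*)$ and those of the form $(*,*,0\mid A,0,*)$, because $-B$ lies in the $4$-space $\{\ovA x : x\in\OO\}$; after this reduction the fibre over each admissible pair $(A,B)$ is visibly a $5$-space, so every fibre contributes exactly $q^{5}$ white vectors and the count closes in one line. If you wish to rescue your stratification, the analogous move is to transport the $a=b=c=0$ stratum by suitable $M_x$, $L_x$, $\delta$, $\tau$ into a configuration with constant fibres before counting, rather than attacking the triple-annihilation problem head on.
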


\begin{proof}
	In the proof we study the series of subspaces
	\begin{equation*}
		0 < \J_{10}^{abC} < \J_{26}^{abABC} < \J.
	\end{equation*}
	
	First, $(a,b,0\mid 0,0,C) \in \J_{10}^{abC}$ is white if and only if 
	$ab - C\ovC = 0$. We notice that $ab - C\ovC$ is a quadratic form of 
	\textit{plus} type defined on $\J_{10}^{abC}$, so there are
	$(q^5-1)(q^4+1)$ white vectors in this subspace. 
	
	Next, suppose $(a,b,c\mid A,B,C) \in \J\setminus \J_{26}^{abABC}$ is white. 
	Then $C = \ovB \ovA c^{-1}$, $b = A\ovA c^{-1}$ and $a = B\ovB c^{-1}$.
	We may choose $A,B,c$ to be arbitrary (with $c \neq 0$), so there are 
	$q^{16}(q-1)$ white vectors in $\J\setminus\J_{26}^{abABC}$.
	
	Finally, we investigate the white vectors $(a,b,0\mid A,B,C) \in 
	 \J_{26}^{abABC}\setminus\J_{10}^{abC}$. The conditions for such a vector 
	 to be white take the following form:
	 \begin{equation}
		\left.\begin{array}{r@{\;}c@{\;}l}
			A \ovA=B\ovB = AB & = & 0, \\
			C \ovC & = & ab, \\
			BC & = & a\ovA, \\
			CA & = & b\ovB.
		\end{array}
		\right\}.
	\end{equation}
	Note that we also require $(A,B) \neq (0,0)$. In case $A = 0$, $B\neq 0$ we apply
	$\delta$ followed by $\tau$ to $(a,b,0\mid A,B,C)$ in order to obtain a 
	vector of the form $(a,b,0\mid A,B,C)$ with $A \neq 0$. Note that the values
	of $a,b,A,B,C$ are not the same as in the initial Albert vector. So, assuming 
	$A \neq 0$, we have $AB = 0$ exactly when $B$ is in a particular $4$-dimensional
	subspace of $\OO$ and any such $B$ satisfies $B\ovB = 0$. For any octonion $x$, the
	action by the element $L_x$ establishes a bijection between the white vectors of
	the form $(*,*,0\mid A,B,*)$ and those of the form \mbox{$(*,*,0\mid A,B+\ovA x,*)$}.
	Left-multiplication by $\ovA$ annihilates a $4$-dimensional subspace of $\OO$
	(see Lemma \ref{lemma:1_octonion_annihilator}), so by
	the rank-nullity theorem we conclude that the image 
	$\mathscr{A} = \{\ \ovA x : x \in \OO\ \}$ is also $4$-dimensional. Note that
	$A (\ovA x) = (A\ovA)x = 0$, for any $x \in \OO$, so $\mathscr{A}$ is the $4$-space
	of all octonions left-annihilated by $A$, and therefore it contains $-B$. 
	Now we pick an octonion $x$ such that $\ovA x = -B$ to obtain the bijection between
	the white vectors of the form $(*,*,0\mid A,B,*)$ with $A\neq 0$ and those of the form 
	$(*,*,0\mid A,0,*)$. An Albert vector $(a,b,0\mid A,0,C)$ is white if and only if
	$A\ovA = C\ovC = CA = 0$ and $a = 0$, with no dependence on $b$. As before, $C$ lies in a
	particular $4$-dimensional subspace of $\OO$, hence $(0,b,0\mid 0,0,C)$ lies in a
	particular $5$-dimensional subspace of $\J_{10}^{abC}$, so for any choice of the 
	pair $(A,B)$ there are $q^5$ white vectors. If $A \neq 0$, then there are
	$(q^4-1)(q^3+1)$ choices for $A$, and for each of these $q^4$ choices for $B$.
	If $A = 0$, we have $(q^4-1)(q^3+1)$ choices for $B$. It follows that in total there
	are
	\begin{equation*}
		q^5 ( q^4(q^4-1)(q^3+1) + (q^4-1)(q^3+1) ) = q^5(q^8-1)(q^3+1)
	\end{equation*}
	white vectors in $\J_{26}^{abABC}\setminus\J_{10}^{abC}$.
	
	The calculations above give the numbers of white vectors in certain subsets of $\J$ as
	shown in the following table.
	
	\begin{table}[h!]
	\begin{tabular}{r|c|c|c}
		subset & $\J_{10}^{abC}$ & $\J_{26}^{abABC}\setminus\J_{10}^{abC}$ & 
										$\J\setminus \J_{26}^{abABC}$ \\ \hline
		\rule{0pt}{10pt}number of white vectors & $(q^5-1)(q^4+1)$ & $q^5(q^8-1)(q^3+1)$ & 
										$q^{16}(q-1)$
	\end{tabular}
	\end{table}
	\noindent Adding these numbers gives the result. 
\end{proof}

\begin{corollary}
	There are precisely
	\begin{equation}
		\frac{(q^{12}-1) (q^9-1)}{(q^4-1)(q-1)} 
	\end{equation}
	white points in the case $F = \Fq$. 
\end{corollary}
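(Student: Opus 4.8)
The plan is to deduce the count of white \emph{points} directly from the count of white \emph{vectors} furnished by Theorem~\ref{theorem:white_points_count}, using the observation that whiteness is a projective property together with the elementary fact that a line through the origin in an $\Fq$-vector space carries exactly $q-1$ non-zero vectors.

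First I would check that if $X = (a,b,c\mid A,B,C)$ is a white vector, then $\lambda X$ is again white for every $\lambda \in \Fq^{*}$. The cleanest route is through the whiteness conditions derived earlier, each of which is homogeneous of degree two in the co\"{o}rdinates of $X$. Scaling by a field element $\lambda$ multiplies both sides of each condition by $\lambda^{2}$; for instance $A\ovA = bc$ becomes $(\lambda A)\overline{\lambda A} = (\lambda A)(\lambda\ovA) = \lambda^{2}A\ovA = \lambda^{2}bc = (\lambda b)(\lambda c)$, where I use that conjugation is $F$-linear and that scalars lie in the centre $F\cdot 1_{\OO}$ and hence associate with every octonion. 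Thus all six conditions persist, and since $\lambda X \neq 0$ for $\lambda \neq 0$ the vector $\lambda X$ is indeed white.

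Next I would record that this makes whiteness a property of the $1$-space $\langle X\rangle$ rather than of the individual vector: every non-zero vector of a white point is of the form $\lambda X$ with $\lambda \in \Fq^{*}$ and so is white, while conversely each white vector spans a white point. Consequently the set of white vectors is partitioned by the white points, each white point accounting for exactly the $q-1$ non-zero scalar multiples of any one of its spanning vectors. Dividing the total from Theorem~\ref{theorem:white_points_count} by $q-1$ then gives the asserted number of white points.

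I expect no genuine obstacle here; the only step requiring any care is the projective invariance of whiteness, and even that is immediate once one notices the degree-two homogeneity of the defining equations. As a sanity check one may verify divisibility directly: $(q^{12}-1)/(q^{4}-1) = q^{8}+q^{4}+1$, so the white-vector count equals $(q^{8}+q^{4}+1)(q^{9}-1)$, which is plainly divisible by $q-1$.
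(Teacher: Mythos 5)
Your proof is correct and takes the same route the paper intends: the corollary is an immediate consequence of Theorem~\ref{theorem:white_points_count}, obtained by dividing the white-vector count by $q-1$ since the degree-two homogeneity of the whiteness conditions (equivalently, of $M(Y,X)$ in $X$) makes whiteness a property of the spanned $1$-space. The paper omits this justification entirely, so your write-up simply supplies the details it leaves implicit.
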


\section{The stabiliser of a white point}

\label{section:stabiliser}

In this section we assume that $\OO$ is a split octonion algebra. 
It is our aim now to obtain the stabiliser in $\SE_6(F)$ of a white point. 
In particular, we prove the following result.

\begin{theorem}
    \label{theorem:1_white_stab}
    If $\OO$ is split, then the stabiliser of a white vector in $\SE_6(F)$ is isomorphic to the group
    generated by the actions of the elements 
    $M_x$, $L_x$, $M_x'$ and $L_x''$ on $\J$ 
    as $x$ ranges over $\OO$ and this is a group of shape
    \begin{equation}
        F^{16}\cn\Spin_{10}^+(F).
    \end{equation}
    The stabiliser of a white point is isomorphic to
    
    \begin{equation}
    	F^{16}\cn\Spin_{10}^+(F).F^{\times},
    \end{equation}
    where $F^{\times}$ is the multiplicative group of the field $F$. 
\end{theorem}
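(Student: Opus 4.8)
By the transitivity of $\SE_6(F)$ on white points established above, I may take the white vector to be $X = (0,0,1\mid 0,0,0)$, whose associated $17$-space is $\J_{17}^{cAB}$. The plan is to organise the action around the flag $0 < \langle X\rangle < \J_{17}^{cAB} < \J$, whose successive quotients $\langle X\rangle$, $\J_{17}^{cAB}/\langle X\rangle$ and $\J/\J_{17}^{cAB}$ have dimensions $1$, $16$ and $10$. I would identify the top quotient $\J/\J_{17}^{cAB}$ (coordinates $a,b,C$) with the natural orthogonal module, carrying the plus-type quadratic form $ab - C\ovC$, and the middle quotient $\J_{17}^{cAB}/\langle X\rangle$ (coordinates $A,B$) with a half-spin module. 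Writing $G_0 = \langle M_x, L_x, M_x', L_x'' : x\in\OO\rangle$, I will check directly that all four families fix $X$ and respect this flag, then split $G_0$ as $U\rtimes L$ with abelian radical $U$ and Levi $L$, and finally prove that $G_0$ exhausts the stabiliser of $X$.

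First I would treat the radical. A direct inspection of the actions shows that $M_x'$ and $L_x''$ induce the identity on every quotient of the flag, that $M_x'M_y' = M_{x+y}'$ and $L_x''L_y'' = L_{x+y}''$, and that the two families commute; hence $U := \langle M_x', L_x''\rangle \cong (\OO\oplus\OO,+) = F^{16}$ is abelian, and a check of the root-group commutators shows it is normalised by $M_x, L_x$, hence normal in $G_0$. For the Levi, $M_x$ and $L_x$ induce Eichler (Siegel) transformations attached to the two points of a hyperbolic pair in $\J/\J_{17}^{cAB}$; these two opposite root subgroups generate $\Omega_{10}^+(F)$ there (and also yield a representative of the transformation $\delta$, which itself fixes $X$). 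On the half-spin quotient the same elements realise the triality-twisted $\Spin_8^+$ of Lemma~\ref{lemma:1_spin8plus} through $A\mapsto uA$, $B\mapsto Bu$, $C\mapsto \ovu C\ovu$; crucially $P_{-1}$ (that is, $P_u$ with $u=-1_{\OO}$), which by the identity $P_u = M_{u-1}L_1 M_{u^{-1}-1}L_{-u}$ lies in $\langle M_x,L_x\rangle$, acts as $-1$ on the $16$-space while fixing the $10$-space, so it is the nontrivial kernel element of $\Spin_{10}^+(F)\to\Omega_{10}^+(F)$. Thus $L := \langle M_x, L_x\rangle \cong \Spin_{10}^+(F)$, and since $U\cap L = 1$ (the only element of $L$ trivial on $\J/\J_{17}^{cAB}$ is $P_{-1}$, which is nontrivial on the middle quotient) we get $G_0 = U\rtimes L \cong F^{16}\cn\Spin_{10}^+(F)$.

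The crux is fullness: every $g$ fixing $X$ must lie in $G_0$. Such a $g$ preserves $\J_{17}^{cAB}$ and hence induces $\bar g$ on $\J/\J_{17}^{cAB}$. Because $g$ preserves $\fdet$, it also respects the coupling between the vector module and the half-spin module encoded in the cubic form; since $g$ preserves the half-spin space rather than interchanging the two half-spin modules, $\bar g$ must lift to the spin group, forcing $\bar g\in\Omega_{10}^+(F)$ and not merely $\SO_{10}^+(F)$ or $\GO_{10}^+(F)$. As $L$ surjects onto $\Omega_{10}^+(F)$, after multiplying $g$ by a suitable element of $L$ I may assume $g$ acts trivially on $\J/\J_{17}^{cAB}$; then $g$ acts as a scalar $\pm1$ on the irreducible half-spin quotient, and after a further multiplication by $P_{-1}$ if necessary it acts trivially there too. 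At this point $g$ induces the identity on the whole associated graded, and a direct coordinate computation — imposing invariance of $\fdet$ on an element that strictly lowers the flag — shows the group of such elements is exactly the $16$-dimensional $U$. Hence $g\in U\cdot L = G_0$. I expect this fullness step to be the main obstacle, in particular the verification that the residual unipotent part is no larger than $U$; the spin-lifting argument confining $\bar g$ to $\Omega_{10}^+(F)$ is the other delicate point.

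Finally, for the stabiliser of the white \emph{point} $\langle X\rangle$, I would consider the homomorphism $\phi$ sending $g$ to the scalar by which it acts on the line $\langle X\rangle$; its kernel is the vector stabiliser $G_0$, so it remains only to show $\phi$ is onto $F^{\times}$. Here I use that $\OO$ is split: the subalgebra $\langle e_0, e_{-0}\rangle\cong F\times F$ is sociable (since $e_0,e_{-0}$ are orthogonal idempotents with $L_{e_0}^2=L_{e_0}$, giving $L_{e_0}L_{e_{-0}}=0$), and it contains $w = e_0 + \mu e_{-0}$ of every norm $\NN(w)=\mu\in F^{\times}$. The diagonal matrix $\diag(w^{-1},1,w)$ over this sociable subalgebra has determinant $1$, so the map $X\mapsto \ovM^{\T} X M$ preserves $\fdet$ and scales the $c$-coordinate of $X$ by $\NN(w)=\mu$. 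Thus $\phi$ is surjective, the extension splits off a factor $F^{\times}$, and the point stabiliser is $F^{16}\cn\Spin_{10}^+(F).F^{\times}$.
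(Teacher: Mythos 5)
Your architecture is essentially the paper's: the same base vector $(0,0,1\mid 0,0,0)$, the same flag with quotients of dimensions $1$, $16$, $10$, the same normal subgroup $U=\langle M_x',L_x''\rangle\cong F^{16}$, the same Levi $\langle M_x,L_x\rangle$ with $P_{-1}$ as the spin kernel, and the same $F^{\times}$ on top for the point stabiliser. The problem is that the two steps you yourself flag as delicate are exactly where the paper has to do real work, and your sketch does not supply that work. First, the claim that the Eichler groups attached to two opposite singular points of $\J_{10}^{abC}$ generate $\Omega_{10}^+(F)$ is asserted, not proved; the paper establishes it by an explicit ladder $\Omega_4^+<\Omega_6^+<\Omega_8^+<\Omega_{10}^+$, at each stage invoking Theorem \ref{theorem:1_omega_maximal} (maximality of $\Omega_{2m}$ in $W\cn\Omega_{2m}$) and Theorem \ref{theorem:1_space_stab} to force each adjoined element $M_{e_i}$ to push the group up to the next $\Omega$. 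If you intend to quote the generation statement as standard, you must at least state it precisely and source it; as written it is a gap.

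Second, and more seriously, the fullness step. The assertions that ``$\bar g$ must lift to the spin group, forcing $\bar g\in\Omega_{10}^+(F)$'' and that an element trivial on the $10$-quotient ``acts as a scalar $\pm1$ on the irreducible half-spin quotient'' are not arguments: you have not shown the $16$-space is irreducible under the relevant group, and even granted that, $g$ need not centralise the Levi, so a Schur-type argument gives nothing about $g$ itself. The paper replaces both claims by concrete octonion computations driven by the $\Tr(ABC)$ term of $\fdet$: Lemma \ref{lemma:1_white_phipsi2} (if $AB=(A\phi)(B\psi)$ for all $A,B$ then $\phi,\psi$ are mutually inverse scalars) shows that an element acting as $\diag(\lambda,\lambda^{-1},1,\dots)$ on the $10$-space forces $\lambda$ to be a square, hence of trivial spinor norm; Lemma \ref{lemma:1_white_phipsi1} rules out the coordinate swap; and part (c) of Lemma \ref{lemma:1_2_actions} pins the residual action on the $16$-space down to $\pm\II_{16}$. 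Some such computation is unavoidable, since the coupling through $\Tr(ABC)$ is the only thing preventing the stabiliser from inducing all of $\GO_{10}^+(F)$ on the quotient. Your treatment of $F^{\times}$ via $\diag(w^{-1},1,w)$ over the sociable subalgebra $\langle e_0,e_{-0}\rangle$ is fine and matches the paper's use of the elements $P_{u^{-1}}'$.
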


This whole section is devoted to proving this result. Some of this proof is in the running text,
and some of it is contained in a series of technical lemmata.  Since it was shown
that the group $\SE_6(F)$ acts transitively on the set of white points, it is sufficient to
study the stabiliser of a specific white vector.  For instance, it is convenient to take
$v = (0,0,1\mid 0,0,0)$. First thing to notice is that $v$ is invariant under the action of the
elements of the form
\begin{equation}
    L_x'' = \begin{bmatrix}
        1&0&x \\
        0&1&0 \\
        0&0&1
    \end{bmatrix},\quad 
    M_y' = \begin{bmatrix}
        1&0&0\\
        0&1&y\\
        0&0&1
    \end{bmatrix},
\end{equation}
where $x,y \in \OO$.

\begin{lemma}$ $
	\label{lemma:1_q16}
	\begin{enumerate}[(a)]
		\item Let $Q$ be any of the $\{L,L',L'',M,M',M''\}$. 
		Then the actions on $\J$ of the 
		elements $Q_x$ where $x$ ranges over $\OO$ generate an elementary abelian group 
		isomorphic to $F^8$. 
		
		\item Let $(R, S)$ be any pair from the set 
		$\{(L,M''), (L',M), (L'',M')\}$ or any of the 
		$\{(L,M'), (L',M''), (L'',M)\}$. Then
		the actions of $R_x$ and $S_x$, as $x$ ranges through $\OO$, 
		generate an elementary abelian 
		group isomorphic to $F^{16}$. 
	\end{enumerate}
\end{lemma}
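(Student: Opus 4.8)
The plan is to prove part (a) for one representative family by identifying the generated group with the additive group of $\OO$, and then to obtain the other five families by symmetry. Taking the family $M_x$, whose action on $\J$ is recorded above, I would compose $M_x$ with $M_y$ and compare the result coordinate by coordinate with $M_{x+y}$. Every coordinate matches immediately except the $b$-coordinate, where the composite contributes $a\NN(x)+a\NN(y)+a\Tr(\ovy x)$; this agrees with the $a\NN(x+y)$ produced by $M_{x+y}$ precisely because $\NN(x+y)=\NN(x)+\NN(y)+\langle x,y\rangle$ and $\langle x,y\rangle=\Tr(\ovy x)$. Hence $M_x\circ M_y=M_{x+y}$, so $x\mapsto M_x$ is a homomorphism from $(\OO,+)$ into the group of linear maps on $\J$. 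Evaluating $M_x$ on $(1,0,0\mid 0,0,0)$, whose image has $C$-coordinate equal to $x$, shows this homomorphism is injective, so the generated group is isomorphic to $(\OO,+)\cong F^8$ and is elementary abelian.

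Since $M'$, $M''$ arise from $M$ by conjugating with the triality element $\tau$, and the three $L$-families arise from the $M$-families through the duality $\delta$ and powers of $\tau$, and since conjugation by an element of $\SE_6(F)$ carries one family bijectively onto another, each of the six generated groups is isomorphic to $\langle M_x\rangle\cong F^8$. This settles part (a).

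For part (b), part (a) already gives $\langle R_x\rangle\cong\langle S_x\rangle\cong F^8$, so it is enough to show that the two families commute elementwise and intersect trivially; the generated group is then $F^8\times F^8\cong F^{16}$. Commutativity is the substantive point, and I expect it to be the main obstacle, as it is exactly where the non-associativity of $\OO$ might interfere. I would compute the actions of $L_x$ and $M_y''$ on $\J$ directly and compose them in both orders. All coordinates match termwise except the $a$-coordinate, where $L_x\circ M_y''$ contributes $\Tr((\ovx A)y)$ and $M_y''\circ L_x$ contributes $\Tr((\ovy\ovA)x)$, so commutativity reduces to the single identity
\begin{equation*}
    \Tr((\ovx A)y)=\Tr(\overline{(\ovx A)y})=\Tr(\ovy(\ovA x))=\Tr((\ovy\ovA)x),
\end{equation*}
which follows from $\Tr(z)=\Tr(\overline z)$, the conjugation-reversal $\overline{uv}=\ovv\,\ovu$, and the $3$-associativity $\Tr(u(vw))=\Tr((uv)w)$. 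For a pair from the second collection, say $(L,M')$, the same computation matches termwise in every coordinate, so no auxiliary identity is needed; this reflects the fact that the first collection consists of pairs sharing a column of the encoding matrix, while the second consists of pairs sharing a row.

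Trivial intersection is then checked on test vectors: if $L_x$ and $M_y''$ coincide as maps, evaluating on $(0,1,0\mid 0,0,0)$ forces $x=0$, and evaluating on $(0,0,1\mid 0,0,0)$ forces $y=0$. Combining this with commutativity and part (a) yields $\langle R_x,S_x\rangle\cong F^{16}$ for the chosen representatives, and the remaining pairs in each collection are their images under powers of $\tau$ together with $\delta$, so the result transfers to every listed pair.
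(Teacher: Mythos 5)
Your proposal is correct and follows essentially the same route as the paper: explicit computation of the actions shows each one-parameter family is an additive isomorphic image of $(\OO,+)$, cross-family commutativity reduces to the identities $\Tr(z)=\Tr(\bar{z})$, $\overline{uv}=\bar{v}\bar{u}$ and $\Tr(u(vw))=\Tr((uv)w)$, trivial intersection gives the direct product $F^8\times F^8\cong F^{16}$, and triality/duality transfer the result to the remaining families and pairs. The only cosmetic differences are your choice of representatives ($M$ for part (a), and the pairs $(L,M'')$, $(L,M')$ rather than the paper's $L''$, $M'$) and your use of two test vectors instead of arbitrary coordinates for the intersection check.
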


\begin{proof}
	To show part (a) for the elements $L_x,L_x',L_x''$ it is enough to consider just, say, 
	$L_x''$ as to obtain the result for the rest of them we can apply the action of the 
	triality element $\tau$.  Similarly, out of $M_x,M_x',M_x''$ we only need to consider, for instance, $M_x'$.
	The actions of $L_x''$ and $M_y'$ on $\J$ are given by
    \begin{equation}
    	\label{eq:1_q16_action}
        \begin{array}{r@{\;\;}c@{\;\;}l}
            L_x'': (a,b,c\mid A, B, C) & \mapsto &
            (a,b,c+a x \ovx  + \Tr(Bx) \mid A+\ovC x, B+A \ovx , C), \\
            M_y': (a,b,c\mid A,B,C) & \mapsto &
            (a,b,c+b y \ovy  + \Tr(\ovy A) \mid A + By, B+\ovy \ovC , C).
        \end{array}
    \end{equation}
    We notice that the action is nontrivial whenever $x$ and $y$ are non-zero. 
    The element $M_y'$ sends $(a,b,c+a x \ovx  + \Tr(Bx) \mid A+\ovC x, B+a
    \ovx , C)$ to
    \begin{equation*}
        (a,b,c+ax\ovx +\Tr(Bx)+by\ovy  + \Tr(\ovy A) \mid
            A+\ovC x + by, B+a\ovx  + \ovy \ovC , C ),
    \end{equation*}
    and the element $L_x''$ sends $(a,b,c+b y \ovy  + \Tr(\ovy A) \mid A + By, B+\ovy 
    \ovC , C)$ to
    \begin{equation*}
        (a,b,c+by\ovy  + \Tr(\ovy A) +ax\ovx  +\Tr(Bx) \mid A+by+\ovC x, B+\ovy \ovC 
        +a \ovx , C).
    \end{equation*}
    Hence, the actions of these elements commute. Similarly, it is straightforward to verify that
    the actions of $L_x''$ and $L_y''$ commute as well as the actions of $M_x'$ and $M_y'$.
    Moreover, the element $L_y''$ sends $(a,b,c+a x \ovx  + \Tr(Bx) \mid A+\ovC x, B+a
     \ovx , C)$ to
    \begin{equation*}
        (a,b,c+ax\ovx  + \Tr(Bx) + ay\ovy  + \Tr(By) + a\Tr(\ovx y) \mid
            A+\ovC x + \ovC y, B + a\ovx  + a\ovy , C),
    \end{equation*}
    and $L_{x+y}''$ sends $(a,b,c\mid A,B,C)$ to
    \begin{equation*}
        (a,b,c+ax\ovx  + a\Tr(x\ovy ) + ay\ovy  + \Tr(B(x+y)) \mid
        A + \ovC (x+y), B+a(\ovx  + \ovy ), C),
    \end{equation*}
    so the action of $L_{x+y}''$ is the same as the product of the actions of $L_x''$ and
    $L_y''$. A similar calculation shows that the action of $M_{x+y}'$ is the same
    as the product of the actions of $M_x'$ and $M_y'$. It follows that the action of 
    $L_x''$ on $\J$, $x\in \OO$ generates an abelian group $(F^8,+)$ as well as the action
    of the element $M_y'$, $y \in \OO$. We simply denote the abelian group $(F^n,+)$ as $F^n$ in
    our further discussion. 
    
    To prove part (b) we need to verify that the intersection of the corresponding abelian groups, isomorphic to $F^8$ and generated by the actions of $L_x''$ and $M_x'$ is trivial. Suppose that the
    actions of $L_x''$ and $M_y'$ are equal. Then, according to (\ref{eq:1_q16_action}), 
    in the fourth ``co{\"o}rdinate'' we have
    \begin{equation*}
    	 A+\ovC x = A + B y
    \end{equation*}
    for arbitrary $A,B,C \in \OO$. In other words, we get $\ovC x = B y$ for arbitrary octonions
    $B$ and $C$. In particular, if $B = 1_{\OO}$ and $C = 0$, we get $y = 0$ and if
    $B = 0$ and $C = 1_{\OO}$ we obtain $x = 0$. So, the intersection of two copies of
    $F^8$ consists of the identity element, as needed, and the result follows. 
    Again, to get (b) for the rest of the pairs in the first set we apply the 
    triality element. The calculations for the second set of pairs are of the same nature. 
\end{proof}

The next observation is that our white vector $v$ is also invariant under the action of the
elements
    \begin{equation}
        M_x = \begin{bmatrix}
            1&x&0\\
            0&1&0\\
            0&0&1
        \end{bmatrix},\quad 
        L_y = \begin{bmatrix}
            1&0&0\\
            y&1&0\\
            0&0&1
        \end{bmatrix}.
    \end{equation}
First, we show that the actions of these on $\J_{10}^{abC}$ generate a group 
of type $\Omega_{10}^+(F)$. As we will see further, instead of arbitrary octonions
it is enough for $x$ to range through the scalar multiples of the basis elements $e_i$.
Define the quadratic form $\QQ_{10}$ on $\J$ via
\begin{equation}
	\QQ_{10}((a,b,c\mid A,B,C)) = ab - C\ovC. 
\end{equation}
We notice that $\QQ_{10}$ is of \textit{plus} type, so for convenience we
denote $\Omega_{10}(F,\QQ_{10})$ as $\Omega_{10}^+(F)$. 

To construct $\Omega_{10}^+(F)$ we follow the series of steps. First, we consider the
$4$-space $V_4$ spanned by the Albert vectors of the form 
\mbox{$(a,b,0\mid 0,0,C_{-1}e_{-1} + C_1 e_1)$}. 

\begin{lemma}
	The actions of the elements $M_{\lambda e_{\pm 1}}$ and $L_{\lambda e_{\pm 1}}$
	on $V_4$, where $\lambda \in F$, generate a group of type $\Omega_4^+(F)$.
\end{lemma}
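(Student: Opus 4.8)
The plan is to compute the four one-parameter families explicitly on $V_4$, recognise the restricted form as the split hyperbolic form, and then pass to the classical realisation of $\Omega_4^+$ on $2\times 2$ matrices.

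First I would write a typical vector of $V_4$ as $(a,b,0\mid 0,0,C)$ with $C=C_{-1}e_{-1}+C_1e_1$, so that $V_4$ carries coordinates $(a,b,C_{-1},C_1)$ and $\QQ_{10}$ restricts to $Q=ab-C\ovC=ab-C_{-1}C_1$, a hyperbolic form of Witt index $2$. Using the action of $M_x$ from Lemma~\ref{lemma:1_Mx_det} and the analogous formula for $L_x$ (obtained either from $M_x$ via the duality $\delta$ or by the same direct matrix computation), together with $e_1e_{-1}=-e_{-0}$, $e_{-1}e_1=-e_0$ and $e_1e_1=e_{-1}e_{-1}=0$, I would first verify that for $x=\lambda e_{\pm1}$ all four maps preserve $V_4$ (this is precisely why $x$ is restricted to scalar multiples of $e_{\pm1}$), and then read off their coordinate actions. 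For example $M_{\lambda e_1}$ fixes $a,C_{-1}$ and sends $b\mapsto b+\lambda C_{-1}$, $C_1\mapsto C_1+\lambda a$, whereas $L_{\lambda e_1}$ fixes $b,C_{-1}$ and sends $a\mapsto a-\lambda C_{-1}$, $C_1\mapsto C_1-\lambda b$; the two families for $e_{-1}$ are similar with the roles of $C_{-1}$ and $C_1$ interchanged. A one-line substitution then confirms that each map preserves $Q$.

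Next I would identify $V_4$ with $\matn{2}{F}$ by sending $(a,b,C_{-1},C_1)$ to $X=\left(\begin{smallmatrix}a&C_1\\C_{-1}&b\end{smallmatrix}\right)$, so that $Q=\det X$. Under this identification the four families become two-sided multiplications by elementary unipotent matrices: $M_{\lambda e_{-1}}$ and $L_{\lambda e_1}$ become left multiplication by $\left(\begin{smallmatrix}1&0\\\lambda&1\end{smallmatrix}\right)$ and $\left(\begin{smallmatrix}1&-\lambda\\0&1\end{smallmatrix}\right)$, while $M_{\lambda e_1}$ and $L_{\lambda e_{-1}}$ become right multiplication by $\left(\begin{smallmatrix}1&\lambda\\0&1\end{smallmatrix}\right)$ and $\left(\begin{smallmatrix}1&0\\-\lambda&1\end{smallmatrix}\right)$. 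Since $\SL_2(F)$ is generated by its upper and lower unitriangular subgroups, the left-multiplying generators produce every map $X\mapsto gX$ with $g\in\SL_2(F)$, and the right-multiplying generators produce every $X\mapsto Xh$ with $h\in\SL_2(F)$. Hence the generated group is exactly the image of the homomorphism $\Phi:\SL_2(F)\times\SL_2(F)\to\GL(V_4)$, $(g,h)\mapsto\bigl(X\mapsto gXh^{-1}\bigr)$, which preserves $Q=\det$ because $\det g=\det h=1$.

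Finally I would invoke the exceptional isomorphism $\Spin_4^+(F)\cong\SL_2(F)\times\SL_2(F)$: the two-sided action $\Phi$ is the spin representation, whose image in $\mathrm{O}_4^+(F)$ is precisely $\Omega_4^+(F)$, with kernel $\{\pm(\II,\II)\}$, giving the claim. The step I expect to be the real obstacle is pinning down that $\operatorname{im}\Phi=\Omega_4^+(F)$ rather than all of $\SO_4^+(F)$. I would secure this either by citing the standard description of $\Spin_4^+$, or self-containedly as follows: each generator lies in a one-parameter unipotent subgroup (so, when $\operatorname{char}F\neq2$, is a square, and in every characteristic is a Siegel transformation), whence it has trivial spinor norm and lies in $\Omega_4^+(F)$, giving $\operatorname{im}\Phi\subseteq\Omega_4^+(F)$; the reverse inclusion follows since these same transformations generate $\Omega_4^+(F)$ for Witt index at least $1$ (consistent with the order check $|\SL_2(F)|^2/2=|\Omega_4^+(F)|$ in the finite case). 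Combining the two inclusions forces equality.
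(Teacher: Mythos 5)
Your proposal is correct and follows essentially the same route as the paper: the paper computes the four one-parameter families on the ordered basis $\{v_1,v_4,v_3,v_2\}$ and recognises them as Kronecker products $g\otimes\II_2$ and $\II_2\otimes h$ with $g,h$ unitriangular, which is exactly your identification of $V_4$ with $2\times 2$ matrices under two-sided multiplication, and then concludes via $\SL_2(F)\circ\SL_2(F)\cong\Omega_4^+(F)$ (citing Taylor). Your extra spinor-norm/Siegel-transformation argument pinning down $\Omega_4^+$ rather than $\SO_4^+$ is a sound substitute for that citation.
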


\begin{proof}
	Consider $\mathcal{B} = \{v_1, v_2, v_3, v_4\}$ to be the basis of $V_4$ with 
	\begin{equation*}
		\begin{array}{r@{\;}c@{\;}l}
			v_1 & = & (1,0,0 \mid 0,0,0), \\
			v_2 & = & (0,0,0 \mid 0,0,e_1), \\
			v_3 & = & (0,0,0 \mid 0,0,e_{-1}), \\
			v_4 & = & (0,1,0 \mid 0,0,0).
		\end{array}
	\end{equation*}
	With respect to $\mathcal{B}$,  the actions on $V_4$ of the elements $M_{\lambda e_{-1}}$, $M_{\lambda e_1}$, $L_{- \lambda e_{-1}}$, and $L_{- \lambda e_1}$ 
	can be written as $4\times 4$ matrices
	\begin{equation*}
		\begin{bmatrix}
			1 & 0 & \lambda & 0 \\
			0 & 1 & 0 & \lambda \\
			0 & 0 & 1 & 0 \\
			0 & 0 & 0 & 1
		\end{bmatrix} = 
		\begin{bmatrix}
			1 & \lambda \\
			0 & 1
		\end{bmatrix} \otimes
		\begin{bmatrix}
			1 & 0 \\
			0 & 1
		\end{bmatrix},\ \ 
		\begin{bmatrix}
			1 & \lambda & 0 & 0 \\
			0 & 1 & 0 & 0 \\
			0 & 0 & 1 & \lambda \\
			0 & 0 & 0 & 1
		\end{bmatrix} = 
		\begin{bmatrix}
			1 & 0 \\
			0 & 1
		\end{bmatrix} \otimes
		\begin{bmatrix}
			1 & \lambda \\
			0 & 1 
		\end{bmatrix},
	\end{equation*}
	\begin{equation*}
		\begin{bmatrix}
			1 & 0 & 0 & 0 \\
			\lambda & 1 & 0 & 0 \\
			0 & 0 & 1 & 0 \\
			0 & 0 & \lambda & 1
		\end{bmatrix} = 
		\begin{bmatrix}
			1 & 0 \\
			0 & 1
		\end{bmatrix} \otimes
		\begin{bmatrix}
			1 & 0 \\
			\lambda & 1
		\end{bmatrix},\ \ 
		\begin{bmatrix}
			1 & 0 & 0 & 0 \\
			0 & 1 & 0 & 0 \\
			\lambda & 0 & 1 & 0 \\
			0 & \lambda & 0 & 1
		\end{bmatrix} = 
		\begin{bmatrix}
			1 & 0 \\
			\lambda & 1
		\end{bmatrix} \otimes
		\begin{bmatrix}
			1 & 0 \\
			0 & 1
		\end{bmatrix},
	\end{equation*}
	respectively. Here $\otimes$ denotes the Kronecker product.  As we know, 
	\begin{equation*}
		\left\langle
			\begin{bmatrix}
				1 & \lambda \\
				0 & 1
			\end{bmatrix},\ 
			\begin{bmatrix}
				1 & 0 \\
				\lambda & 1
			\end{bmatrix}\ 
			\bigg|
			\ 
			\lambda \in F
		\right\rangle \cong \SL_2(F).
	\end{equation*}
	It follows that
	\begin{multline*}
		\left\langle
			\begin{bmatrix}
				1 & \lambda \\
				0 & 1
			\end{bmatrix} \otimes
			\begin{bmatrix}
				1 & 0 \\
				0 & 1
			\end{bmatrix},\ 
			\begin{bmatrix}
				1 & 0 \\
				\lambda & 1
			\end{bmatrix}\otimes
			\begin{bmatrix}
				1 & 0 \\
				0 & 1
			\end{bmatrix}\ 
			\bigg|
			\ 
			\lambda \in F
		\right\rangle \cong \\
		\cong 
		\left\langle
			\begin{bmatrix}
				1 & 0 \\
				0 & 1
			\end{bmatrix} \otimes
			\begin{bmatrix}
				1 & \lambda \\
				0 & 1
			\end{bmatrix},\ 
			\begin{bmatrix}
				1 & 0 \\
				0 & 1
			\end{bmatrix} \otimes
			\begin{bmatrix}
				1 & 0 \\
				\lambda & 1
			\end{bmatrix}\ 
			\bigg|
			\ 
			\lambda \in F
		\right\rangle \cong \SL_2(F),
	\end{multline*}
	and since 
	\begin{equation*}
		\begin{bmatrix}
				-1 & 0 \\
				0 & -1
		\end{bmatrix} \otimes
		\begin{bmatrix}
				-1 & 0 \\
				0 & -1
		\end{bmatrix} = 
		\begin{bmatrix}
			1 & 0 & 0 & 0 \\
			0 & 1 & 0 & 0 \\
			0 & 0 & 1 & 0 \\
			0 & 0 & 0 & 1
		\end{bmatrix},
	\end{equation*}
	we finally get
	\begin{multline*}
		\left\langle
			\begin{bmatrix}
				1 & 0 \\
				0 & 1
			\end{bmatrix} \otimes 
			\begin{bmatrix}
				1 & \lambda \\
				0 & 1
			\end{bmatrix},\ 
			\begin{bmatrix}
				1 & 0 \\
				0 & 1
			\end{bmatrix} \otimes
			\begin{bmatrix}
				1 & 0 \\
				\lambda & 1
			\end{bmatrix}, \right. \\
			\left. \begin{bmatrix}
				1 & \lambda \\
				0 & 1
			\end{bmatrix} \otimes 
			\begin{bmatrix}
				1 & 0 \\
				0 & 1
			\end{bmatrix},\ 
			\begin{bmatrix}
				1 & 0 \\
				\lambda & 1
			\end{bmatrix} \otimes
			\begin{bmatrix}
				1 & 0 \\
				0 & 1
			\end{bmatrix}\ 
			\bigg|
			\ 
			\lambda \in F
		\right\rangle \cong \SL_2(F) \circ \SL_2(F).
	\end{multline*}
	Now, $\SL_2(F) \circ \SL_2(F) \cong \Omega_4^+(F)$ (see, for example
    \cite{Taylor}), and this finishes the proof. 
\end{proof}

In our construction we use the results of section \ref{section:orthogonal}. Consider the \mbox{$6$-space} $V_6$ spanned by the Albert vectors
$(a,b,0\mid 0,0,C)$, where $C \in \langle e_{-1}, e_{\ombb}, e_{-\ombb}, 
e_1 \rangle$. Our copy of $\Omega_4^+(F)$ preserves two isotropic Albert vectors in
 $V_6$:
\begin{equation}
	\begin{array}{r@{\;}c@{\;}l}
		u_{\ombb} & = & (0,0,0\mid 0,0,e_{\ombb}), \\
		u_{-\ombb} & = & (0,0,0\mid 0,0,e_{-\ombb}). \\
	\end{array} 
\end{equation}
The element $M_{e_{\ombb}}$ preserves $u_{\ombb}$ but not $u_{-\ombb}$.
Therefore, adjoining $M_{e_{\ombb}}$ to $\Omega_4^+(F)$, 
we obtain a subgroup
of $V_4\cn\Omega_4^+(F)$ (Lemma \ref{lemma:1_stabiliser_omega}), and since 
$\Omega_4^+(F)$
is maximal in the latter (Theorem \ref{theorem:1_omega_maximal}), we conclude that the action of $M_{\lambda e_{\pm 1}}$, 
$L_{\lambda e_{\pm 1}}$ and $M_{e_{\ombb}}$ on $V_6$ is that of 
$V_4\cn\Omega_4^+(F)$. That is, we have constructed the group 
$V_4\cn\Omega_4^+(F)$
as the stabiliser of $u_{\ombb}$ in $\Omega_6^+(F)$. 
Now we use the result of Theorem \ref{theorem:1_space_stab}. The element
 $M_{e_{-\ombb}}$ preserves $V_6$ but it does not preserve
$u_{\ombb}$, and as a consequence it does not preserve the $1$-space $\langle
u_{\ombb} \rangle$. Therefore, if we adjoin $M_{e_{-\ombb}}$ to our copy of 
$V_4\cn\Omega_4^+(F)$, we get the action of the group $\Omega_6^+(F)$ on $V_6$. 

Similarly, we consider the $8$-space $V_8$ spanned by the vectors 
$(a,b,0\mid 0,0,C)$ with $C \in \langle e_{-1}, e_{\ombb}, e_{\omega}, 
e_{- \omega}, e_{-\ombb}, e_1 \rangle$. Consider two isotropic Albert vectors
\begin{equation}
	\begin{array}{r@{\;}c@{\;}l}
		u_{\omega} & = & (0,0,0 \mid 0,0,e_{\omega}), \\
		u_{-\omega} & = & (0,0,0 \mid 0,0,e_{-\omega}),
	\end{array}
\end{equation}
which are fixed by our copy of $\Omega_6^+(F)$. The action of the element
 $M_{e_{\omega}}$
on $V_8$ preserves $u_{\omega}$ but not $u_{-\omega}$ and therefore adjoining this element
to $\Omega_6^+(F)$ we get the action of the group $V_6\cn\Omega_6^+(F)$. Next, 
the element $M_{e_{-\omega}}$ does not preserve the $1$-space $\langle u_{\omega} \rangle$,
so appending it to $V_6\cn\Omega_6^+(F)$ we get the action of the group
$\Omega_8^+(F)$ on $V_8$. 

Finally, we consider the $10$-space $\J_{10}^{abC}$ with two isotropic Albert vectors
\begin{equation}
	\begin{array}{r@{\;}c@{\;}l}
		u_0 & = & (0,0,0\mid 0,0,e_0), \\
		u_{-0} & = & (0,0,0\mid 0,0,e_{-0}).
	\end{array}
\end{equation}
Following the same procedure, we adjoin the element $M_{e_0}$ which fixes $u_0$ but not 
$u_{-0}$ to get the action of the group of shape $V_8\cn\Omega_8^+(F)$. 
Appending the action of $M_{e_{-0}}$, which does not preserve $\langle u_0 \rangle$, to
this yields the action of $\Omega_{10}^+(F)$ on $\J_{10}^{abC}$. 
Lemma \ref{lemma:1_q16} allows us to conclude that we have shown the following result.

\begin{lemma}
	\label{lemma:1_omega10plus}
	The actions of $M_x$ and $L_x$ on $\J_{10}^{abC}$ generate the 
	group $\Omega_{10}^+(F)$ as $x$ ranges through $\OO$. 
\end{lemma}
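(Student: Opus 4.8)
The plan is to climb the flag of non-degenerate plus-type subspaces
\[
V_4 \subset V_6 \subset V_8 \subset \J_{10}^{abC},
\]
raising the orthogonal group by one rank at each stage, exactly as prepared in the running text above. The base case is the preceding lemma, which identifies the actions of $M_{\lambda e_{\pm 1}}$ and $L_{\lambda e_{\pm 1}}$ on $V_4$ with $\Omega_4^+(F)$ via the isomorphism $\SL_2(F)\circ\SL_2(F)\cong\Omega_4^+(F)$. The inductive hypothesis is that the $M$- and $L$-actions constructed so far generate $\Omega_{2m}^+(F)$ on $V_{2m}$, and I would then adjoin two further basis-indexed elements to pass to $\Omega_{2m+2}^+(F)$ on $V_{2m+2}$.

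Each step splits into two adjunctions, organised around the two isotropic Albert vectors $u,u'$ in $V_{2m+2}$ that are fixed by the current copy of $\Omega_{2m}^+(F)$. First I adjoin an element (one of $M_{e_{\bar\omega}}, M_{e_\omega}, M_{e_0}$) that fixes $u$ but moves $u'$; since it fixes $u$ it lies, by Lemma \ref{lemma:1_stabiliser_omega}, in a group of shape $F^{2m}\cn\Omega_{2m}^+(F)$, and because $\Omega_{2m}^+(F)$ is maximal there (Theorem \ref{theorem:1_omega_maximal}, whose Witt-index hypothesis holds as $V_{2m}$ is of plus type with Witt index $m\geq 2$) the enlargement is forced to be all of $F^{2m}\cn\Omega_{2m}^+(F)$. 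Second I adjoin an element ($M_{e_{-\bar\omega}}, M_{e_{-\omega}}, M_{e_{-0}}$ respectively) which does not stabilise the $1$-space $\langle u\rangle$; by Theorem \ref{theorem:1_space_stab} every subgroup strictly between $F^{2m}\cn\Omega_{2m}^+(F)$ and $\Omega_{2m+2}^+(F)$ fixes $\langle u\rangle$, so the new group, which does not, can only be the whole of $\Omega_{2m+2}^+(F)$. Iterating through the pairs $(u_{\bar\omega},u_{-\bar\omega})$, $(u_\omega,u_{-\omega})$, $(u_0,u_{-0})$ brings us to $\Omega_{10}^+(F)$ acting on $\J_{10}^{abC}$.

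Two loose ends remain, and both are where I expect the care to be needed. The dichotomy theorems are stated for subgroups of $\Omega$, so at every stage I must check that each adjoined element lies in $\Omega$ rather than merely in the full orthogonal group; this holds because each $M_x$ and $L_x$ preserves $\QQ_{10}$ on $\J_{10}^{abC}$ (a one-line norm computation) and acts unipotently there, hence has trivial spinor norm and (quasi)determinant and therefore lies in $\Omega$ on every invariant subspace $V_{2m+2}$. Secondly, the bootstrap uses only the basis-indexed elements $M_{\lambda e_i}, L_{\lambda e_i}$, whereas the statement quantifies over all $x\in\OO$; here Lemma \ref{lemma:1_q16} closes the gap, since it shows the $M_x$ (and likewise the $L_x$) generate just an elementary abelian $F^8$, which is unipotent and so already contained in $\Omega_{10}^+(F)$. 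Thus the full families generate exactly $\Omega_{10}^+(F)$, neither less (by the explicit construction) nor more (by the containment just noted). The single most delicate point is the first ascent $V_4\to V_6$, where $V_4$ has Witt index exactly $2$ and the hypotheses of Theorems \ref{theorem:1_omega_maximal} and \ref{theorem:1_space_stab} are at their tightest; one must verify there with care that $M_{e_{\bar\omega}}$ fixes $u_{\bar\omega}$ while moving $u_{-\bar\omega}$, and that $M_{e_{-\bar\omega}}$ genuinely fails to preserve $\langle u_{\bar\omega}\rangle$, so that both adjunctions do what is claimed.
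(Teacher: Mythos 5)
Your proposal is correct and follows essentially the same route as the paper: the same ladder $V_4\subset V_6\subset V_8\subset \J_{10}^{abC}$, the same base case via $\SL_2(F)\circ\SL_2(F)\cong\Omega_4^+(F)$, the same two-step adjunction at each rung using Lemma \ref{lemma:1_stabiliser_omega}, Theorem \ref{theorem:1_omega_maximal} and Theorem \ref{theorem:1_space_stab}, and the same appeal to Lemma \ref{lemma:1_q16} to pass from the basis-indexed elements to all of $\OO$. The one point you treat more explicitly than the paper is membership of the adjoined elements in $\Omega$ rather than merely $\SO$; your unipotence argument is fine in characteristic not $2$, though the cleaner justification (used by the paper later, in the non-split setting) is that $M_x$ acts on $\J_{10}^{abC}$ as a product of two reflexions in vectors of equal norm, which gives trivial spinor norm uniformly in all characteristics.
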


Now we need to understand the action of the elements $M_x$ and $L_x$ on the whole 
$27$-space $\J$.  

\begin{lemma}
	\label{lemma:1_2_actions}
	Suppose an element $g$ of the stabiliser in $\SE_6(F)$ of $v$ preserves 
	the decomposition of the Albert space into
	the direct sum of the form $\J = \J_{1}^c \oplus \J_{16}^{AB} \oplus \J_{10}^{abC}$.
	
	\begin{enumerate}[(a)]
	\item If the action of $g$ on the $10$-space $\J_{10}^{abC}$ 
	is given by
	\begin{equation*}
		\begin{array}{r@{\;\;}c@{\;\;}l}
			(1,0,0\mid 0,0,0) & \mapsto & (\lambda,0,0\mid 0,0,0), \\
			(0,1,0\mid 0,0,0) & \mapsto & (0,\lambda^{-1},0\mid 0,0,0), \\
			(0,0,0\mid 0,0,C) & \mapsto & (0,0,0 \mid 0,0,C),
		\end{array}
	\end{equation*}
	then $\lambda$ is a square in $F$. 
	
	\item On the other hand, an action of the type 
	\begin{equation*}
		\begin{array}{r@{\;\;}c@{\;\;}l}
			(1,0,0\mid 0,0,0) & \mapsto & (0,\lambda,0\mid 0,0,0), \\
			(0,1,0\mid 0,0,0) & \mapsto & (\lambda^{-1},0,0\mid 0,0,0), \\
			(0,0,0\mid 0,0,C) & \mapsto & (0,0,0 \mid 0,0,C)
		\end{array}
	\end{equation*}
	is impossible. 
	
	\item Finally, if the action on the $10$-space is 
	trivial, then the action on the corresponding $16$-space is that of $\pm \mathrm{I}_{16}$ 
	(hence, the action on $\J$ is that of $P_{\pm 1}$). 
	
	\end{enumerate}
\end{lemma}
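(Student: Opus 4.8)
The plan is to use that $g$ fixes $v=(0,0,1\mid 0,0,0)$ and respects the three summands, so it is the identity on $\J_1^c$ (whence $c\mapsto c$) and acts by linear maps $\rho$ on $\J_{10}^{abC}$ and $\sigma$ on $\J_{16}^{AB}$; write $\sigma\colon(A,B)\mapsto(A',B')$ with $A',B'$ linear in $(A,B)$. First I would substitute a general $X=(a,b,c\mid A,B,C)$ into $\fdet(gX)=\fdet(X)$ and cancel the summands $g$ visibly preserves: the $abc$ term (since $\rho$ either scales $a,b$ by reciprocal factors or swaps them, keeping $abc$ fixed) and the $-cC\ovC$ term (as $c,C$ are fixed in all three cases). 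Since $a,b$ and $C$ are independent of $A,B$, I can then read off the coefficients of $a$, of $b$, and of the $C$-linear trace term separately, using that $\Tr(YC)=\langle Y,\ovC\rangle$ is a nondegenerate pairing to pass from $\Tr(A'B'C)=\Tr(ABC)$ to $A'B'=AB$. This yields in case (a) the relations $\NN(A')=\lambda^{-1}\NN(A)$, $\NN(B')=\lambda\NN(B)$, $A'B'=AB$; in case (b) the relations $\NN(A')=\lambda\NN(B)$, $\NN(B')=\lambda^{-1}\NN(A)$, $A'B'=AB$; and in case (c) the relations $\NN(A')=\NN(A)$, $\NN(B')=\NN(B)$, $A'B'=AB$.

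The key structural step, which I expect to be the main obstacle, is turning each norm relation into a block shape for $\sigma$. Writing $A'=\alpha(A)+\beta(B)$ and expanding, $\NN(A')=\NN(\alpha A)+\langle\alpha A,\beta B\rangle+\NN(\beta B)$; the requirement that $\NN(A')$ depend on only one of $A,B$ forces the cross term $\langle\alpha A,\beta B\rangle$ to vanish identically (it is linear in $B$ and must be constant in $B$), while setting $B=0$ gives $\NN(\alpha A)=\lambda^{-1}\NN(A)$, so $\alpha$ is a similitude. A similitude is automatically invertible (polarising gives $\langle\alpha x,\alpha y\rangle=\lambda^{-1}\langle x,y\rangle$, whence $\ker\alpha=0$ by nondegeneracy of $\NN$), so $\operatorname{Im}\alpha=\OO$; then $\langle\alpha A,\beta B\rangle\equiv0$ forces $\beta B\perp\OO$, i.e. $\beta=0$. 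Applying this to both $A'$ and $B'$ shows that in cases (a) and (c) the map $\sigma$ is block diagonal, $A'=\alpha(A)$ and $B'=\delta(B)$ with $\alpha,\delta$ invertible similitudes, while in case (b) it is block anti-diagonal, $A'=\alpha(B)$ and $B'=\delta(A)$.

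It then remains to invoke the two rigidity lemmas. In case (a), $A'B'=AB$ reads $(A\alpha)(B\delta)=AB$ in the paper's right-action notation, so Lemma~\ref{lemma:1_white_phipsi2} gives $\delta\colon x\mapsto\mu x$ and $\alpha\colon x\mapsto\mu^{-1}x$ for some $\mu\in F^\times$; comparing similitude factors, $\NN(\delta B)=\mu^2\NN(B)=\lambda\NN(B)$ forces $\lambda=\mu^2$, a square. In case (b), the anti-diagonal relation $A'B'=AB$ reads $(B\alpha)(A\delta)=AB$ with $\alpha,\delta$ invertible, which is precisely the configuration excluded by Lemma~\ref{lemma:1_white_phipsi1}; hence no such $g$ exists. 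In case (c) the computation of (a) runs with $\lambda=1$, giving $\mu^2=1$ and $\alpha=\delta^{-1}=\delta$, so $\sigma=\mu\,\mathrm{I}_{16}=\pm\mathrm{I}_{16}$; comparing with the recorded action of $P_u$, namely $A\mapsto uA$, $B\mapsto Bu$, $C\mapsto\ovu C\ovu$, at $u=\pm1$ identifies $g$ with $P_{\pm1}$, as required.

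As a consistency check for (a), the explicit matrix $\diag(t,t^{-1},1)$ over $F\cdot 1_{\OO}$ lies in $\SE_6(F)$, fixes $v$, preserves the decomposition, and realises $a\mapsto t^2a$, $b\mapsto t^{-2}b$, $C\mapsto C$ on $\J_{10}^{abC}$ together with $A\mapsto t^{-1}A$, $B\mapsto tB$; so every square $\lambda=t^2$ genuinely occurs and the ``$\lambda$ is a square'' conclusion is exactly sharp.
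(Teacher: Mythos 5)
Your proof is correct and follows essentially the same route as the paper's: both reduce determinant-preservation to the norm relations together with $\Tr(A'B'C)=\Tr(ABC)$ for all $C$, hence $A'B'=AB$ by nondegeneracy of the trace pairing, and then invoke Lemma~\ref{lemma:1_white_phipsi2} to force $\phi,\psi$ to be reciprocal scalar multiplications (so $\lambda=\mu^2$, and $\mu=\pm1$ in case (c)) and Lemma~\ref{lemma:1_white_phipsi1} to exclude the order-reversing case (b). The only real difference is how the block-diagonal/anti-diagonal shape of the action on $\J_{16}^{AB}$ is justified: the paper deduces invariance (or swapping) of $\J_8^A$ and $\J_8^B$ from the invariance of the canonical $17$-spaces attached to the fixed or swapped white points, whereas you extract it directly from the similitude relations $\NN(A')=\lambda^{\pm1}\NN(A)$ by killing the cross term --- a slightly more self-contained derivation of the same fact.
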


\begin{proof}
	We are considering the elements that fix $\J_{8}^C$ pointwise
    and either fix or swap the $1$-dimensional spaces $\J_1^a$ and $\J_1^b$. 
    So we may assume that these elements respectively fix or swap the corresponding
    $17$-spaces $\J_{17}^{aBC}$ and $\J_{17}^{bAC}$.
    In particular, their intersection, i.e. the space $\J_{8}^C$ is fixed. If the action of the
     stabiliser
    swaps $\J_1^a$ and $\J_1^b$ while leaving the $1$-space $\J_1^c$ in its
    place, then it also swaps the $8$-spaces $\J_8^A$ and
    $\J_8^B$ as these subspaces are the intersections of the $17$-space
    $\J_{17}^{cAB}$ with $\J_{17}^{bAC}$ and $\J_{17}^{aBC}$ respectively.
    
   	Suppose now that an element $g$ in the stabiliser acts in the following manner:
    \begin{equation*}
            g:(a,b,c\mid A,B,C) \mapsto (\lambda a, \lambda^{-1} b, c \mid
                A \phi, B\psi, C),
    \end{equation*}
    where $\phi, \psi : \OO \rightarrow \OO$ are invertible $F$-linear maps.
    As this action is supposed to  preserve the determinant, it has to preserve the cubic term
    $\Tr(ABC)$ in particular, i.e. we must have $\Tr(ABC) = \Tr((A\phi) (B\psi) C)$ for all 
    $A,B,C \in \OO$. This
    is equivalent to the condition $AB = (A\phi) (B\psi)$ for all $A,B \in \OO$, since the 
    original identity is equivalent to
    $\langle AB, \ovC  \rangle = \langle (A\phi)(B\psi) , \ovC  \rangle$.
    By Lemma \ref{lemma:1_white_phipsi2} we find that $A \phi = \mu^{-1} A$ and
    $B \psi = \mu B$ for all $A,B \in \OO$ and some non-zero $\mu \in F$. The individual terms in
     the determinant are being
    changed in the following way:
    \begin{equation*}
        \begin{array}{r@{\;\;}c@{\;\;}l}
        abc & \mapsto & abc, \\
        a A \ovA  & \mapsto & \lambda \mu^{-2} a A \ovA , \\
        b B \ovB  & \mapsto & \lambda^{-1} \mu^2 b B \ovB , \\
        c C \ovC  & \mapsto & c C \ovC ,\\
        \Tr(ABC) & \mapsto & \Tr(ABC).
    \end{array}
    \end{equation*}
    It follows that in order to preserve the determinant we must have $\lambda^{-1} \mu^2 = 1$,
    i.e. $\lambda = \mu^2$.
    
    In case when $g$ acts as
    \begin{equation*}
   		g: (a,b,c\mid A,B,C) \mapsto (\lambda^{-1} b, \lambda a, c \mid B\psi, A\phi, C),
   	\end{equation*}
    we get $\Tr(ABC) = \Tr((B\psi) (A\phi) C)$
    for all $A,B,C \in \OO$. This holds if and only if $AB = (B\psi) (A\phi)$ for
    all $A,B \in \OO$. Lemma
    \ref{lemma:1_white_phipsi1} asserts that there are no such maps $\phi$ and $\psi$,
    and so this rules out the latter case.
    
    Finally, if we assume the trivial action on $\J_{10}^{abC}$, then we get $\lambda = 1$,
     i.e. 
    $\mu^2 = 1$, so the action on $\J$ is indeed that of $P_{\pm 1}$. 
\end{proof}

Now let $X = (a,b,c\mid A,B,C)$
and let $Y = (a',b',c' \mid A', B', C')$. An isometry which maps $X$ to
    $Y$ and $v$ to $\lambda v$ must send $\fdet(X+v)-\fdet(X) = ab-C\ovC $ to
    $\fdet(Y+\lambda v) - \fdet(Y) = \lambda (a'b' - C'\ovC ')$. The \mbox{$17$-dimensional}
     radical
    of both of these forms is fixed, and the quadratic form $ab-C\ovC$ is being scaled by
    a factor of $\lambda$. In particular, when $\lambda = 1$, the quadratic
    form is being preserved. So, the action of the vector stabiliser on the $10$-dimensional
    quotient is that of a subgroup of $\GO_{10}^+(F)$. 

    Consider the white vectors of the form $(a,0,c\mid A,B,0)$ and $(0,b,c\mid A,B,0)$ with
    $a,b \neq 0$. In the first case the conditions for being white are
    \begin{equation*}
        \left.
            \begin{array}{r@{\;}c@{\;}l}
                A\ovA  & = & 0,\\
                B\ovB  & = & ac, \\
                a\ovA  & = & 0, \\
                AB & = & 0.
            \end{array}
        \right\}
    \end{equation*}
    In other words, we have a white vector of the form
    $(a,0,B\ovB /a\mid 0,B,0)$. For the second vector we get
    \begin{equation*}
        \left.
            \begin{array}{r@{\;}c@{\;}l}
                bc & = & A\ovA , \\
                B\ovB  & = & 0, \\
                b\ovB  & = & 0,
            \end{array}
        \right\}
    \end{equation*}
    so the vector has the form $(0,b,A\ovA /b\mid A,0,0)$. The elements $M_x'$ and $L_x''$
    transform these in the following way:
    \begin{equation*}
        \begin{array}{r@{\;\;}c@{\;\;}l}
            M_x': (a,0,B\ovB /a\mid 0,B,0) & \mapsto &
            (a,0,B\ovB /a\mid 0,B,0), \\
            M_x': (0,b,A\ovA /b\mid A,0,0) & \mapsto &
            (0,b,A\ovA /b+bx\ovx  + \Tr(\ovx A) \mid A+bx,0,0), \\
            L_x'': (a,0,B\ovB /a\mid 0,B,0) & \mapsto &
            (a,0,B\ovB /a+ax\ovx +\Tr(Bx)\mid 0,B+a\ovx , 0), \\
            L_x'': (0,b,A\ovA /b\mid A,0,0) & \mapsto &
            (0,b,A\ovA /b\mid A,0,0).
        \end{array}
    \end{equation*}
    Note that we already have an elementary abelian group $F^{16}$ acting
    on the $17$-space \mbox{$\J_{17}^{cAB}$}. We can now 
    invoke Lemma \ref{lemma:1_2_actions} to conclude that the action of the stabiliser
    on the remaining $10$-space $\J_{10}^{abC}$ is that of $\Omega_{10}^+(F)$ and the 
    kernel of the action on $\J$ has order no more than two. 

\begin{theorem}
	\label{theorem:1_splin10plus}
	The actions of the elements $M_x$ and $L_x$ on $\J$ where $x$ ranges through a 
	split octonion algebra $\OO$
	generate a group of type $\Spin_{10}^+(F)$ understood as $\Omega_{10}^+(F)$
	in case of characteristic $2$. 
\end{theorem}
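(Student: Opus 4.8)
The plan is to study the homomorphism $\rho\colon G\to\GO_{10}^+(F)$ obtained by restricting the action of $G=\langle M_x,L_x : x\in\OO\rangle$ to the $10$-space $\J_{10}^{abC}$. Each generator $M_x$ and $L_x$ fixes $v=(0,0,1\mid 0,0,0)$ and visibly carries each of $\J_1^c$, $\J_{16}^{AB}$ and $\J_{10}^{abC}$ into itself, so $G$ preserves the decomposition $\J=\J_1^c\oplus\J_{16}^{AB}\oplus\J_{10}^{abC}$ and $\rho$ is well defined; by Lemma~\ref{lemma:1_omega10plus} its image is exactly $\Omega_{10}^+(F)$. The first real step is to pin down $\ker\rho$. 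An element acting trivially on $\J_{10}^{abC}$ falls under Lemma~\ref{lemma:1_2_actions}(c), so it acts on all of $\J$ as $P_1=\II$ or as $P_{-1}$; hence $\ker\rho\leqslant\langle P_{-1}\rangle$ has order at most two.

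I would then separate the two characteristics. In characteristic $2$ we have $P_{-1}=P_1=\II$, so $\rho$ is injective and $G\cong\Omega_{10}^+(F)$, which is exactly what the statement means by $\Spin_{10}^+(F)$ \emph{understood as} $\Omega_{10}^+(F)$. In odd characteristic I would verify that the kernel is genuinely of order two by locating $P_{-1}$ inside $G$: specialising the identity $P_u=M_{u-1}L_1M_{u^{-1}-1}L_{-u}$ at $u=-1$ gives $P_{-1}=M_{-2}L_1M_{-2}L_1$, a word in the generators. Since $P_{-1}$ sends $C\mapsto(-1)C(-1)=C$ and fixes $a,b$, it lies in $\ker\rho$, while it acts as $-\II_{16}$ on $\J_{16}^{AB}$ and so is nontrivial. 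Therefore $\ker\rho=\langle P_{-1}\rangle\cong C_2$, giving $|G|=2\,|\Omega_{10}^+(F)|=|\Spin_{10}^+(F)|$, so that $G$ is a central double cover of $\Omega_{10}^+(F)$.

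The final and hardest step is to show that this double cover is the nonsplit one, i.e.\ that $G\cong\Spin_{10}^+(F)$ rather than the split product $\Omega_{10}^+(F)\times C_2$. I expect this to be the main obstacle, since surjectivity and the order-two kernel are almost immediate from the earlier lemmata, whereas separating $\Spin_{10}^+(F)$ from $\Omega_{10}^+(F)\times C_2$ is precisely where the $16$-space must enter. My preferred route is to prove that $G$ is perfect, which already excludes the split product (the latter has $C_2$ as a direct quotient). Concretely, the elements $M_{\lambda e_{\pm 1}},L_{\lambda e_{\pm 1}}$ act on the $4$-space $V_4$ as $\SL_2(F)\circ\SL_2(F)\cong\Omega_4^+(F)$; I would compute their action on $\J_{16}^{AB}$ as well and show that on the whole of $\J$ they generate the genuine direct product $\SL_2(F)\times\SL_2(F)\cong\Spin_4^+(F)$, whose central element $(-\II,-\II)$ coincides with $P_{-1}$. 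As $(-\II,-\II)$ is a commutator in $\SL_2(F)\times\SL_2(F)$ for $|F|>3$, this exhibits $P_{-1}\in[G,G]$, forcing $G$ perfect and hence $G\cong\Spin_{10}^+(F)$.

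Alternatively, and more conceptually, one identifies the action of $G$ on $\J_{16}^{AB}$ with a half-spin representation of $\Spin_{10}^+(F)$: it is $16$-dimensional, the central $P_{-1}$ acts on it as the scalar $-\II$, and it therefore does not factor through $\rho$. Since the $27$-space decomposes under this $D_5$ as $1+10+16$ (trivial, vector and half-spin), this realises $G$ as $\Spin_{10}^+(F)$ directly. Either way, the crux is to extract the genuine double cover from the action on the $16$-space, the vector module $\J_{10}^{abC}$ alone being unable to distinguish $\Spin_{10}^+(F)$ from $\Omega_{10}^+(F)$.
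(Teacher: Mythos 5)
Your proposal is correct and follows the same skeleton as the paper: restrict the action of $\langle M_x, L_x\rangle$ to $\J_{10}^{abC}$, invoke Lemma~\ref{lemma:1_omega10plus} to identify the image as $\Omega_{10}^+(F)$, and invoke Lemma~\ref{lemma:1_2_actions}(c) to bound the kernel of this restriction by $\langle P_{-1}\rangle$. Where you go beyond the paper is in supplying two steps it leaves implicit: first, the explicit word $P_{-1}=M_{-2}\,L_1\,M_{-2}\,L_1$ (from $P_u=M_{u-1}L_1M_{u^{-1}-1}L_{-u}$ at $u=-1$) showing that in odd characteristic the kernel really is of order two and not trivial; and second, an argument that the resulting central extension is the non-split one, via perfectness of the group or via identifying $\J_{16}^{AB}$ as a half-spin module on which $P_{-1}$ acts as $-\II_{16}$. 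The paper states the theorem immediately after the kernel bound with no further justification, so these are genuine improvements rather than deviations. Two small caveats on your side: the perfectness route via $\SL_2(F)\times\SL_2(F)$ needs $|F|>3$ (so $\mathbb{F}_3$ requires a separate check, e.g.\ by generating with more root subgroups), and passing from ``perfect central extension of $\Omega_{10}^+(F)$ by a group of order two'' to ``isomorphic to $\Spin_{10}^+(F)$'' tacitly uses the structure of the Schur multiplier of $\POmega_{10}^+(F)$ --- a point the paper also takes for granted.
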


With the result of Lemma \ref{lemma:1_q16} we conclude that the stabiliser of a white
vector is indeed a group of shape
$F^{16}\cn\Spin_{10}^+(F)$ as usual understood as $F^{16}\cn\Omega_{10}^+(F)$ in case of
characteristic $2$. 

Now we have enough ingredients to produce the vector stabiliser. As before, we consider the stabiliser of the white
    vector $v = (0,0,1\mid 0,0,0)$.
As we know from Theorem \ref{theorem:1_splin10plus} and Lemma \ref{lemma:1_2_actions}, 
the actions of the elements $M_x$ and $L_x$ on $\J$ generate a group of type 
$\Spin_{10}^+(F)$. It is easy to check
 that this copy of $\Spin_{10}^+(F)$ normalises the elementary abelian
group $F^{16}$ from Lemma \ref{lemma:1_q16}. A straighforward computation
illustrates the following result:

\begin{equation}
	\begin{array}{r@{\;}c@{\;}l}
		(M_x')^{L_y} & \text{ acts as } & M_x', \\
		(M_x')^{M_y} & \text{ acts as } &  L_{-yx}'' \cdot M_x', \\
		(L_x'')^{L_y} & \text{ acts as } & M_{-yx}' \cdot L_x'', \\
		(L_x'')^{M_y} & \text{ acts as } & L_x'', \\
	\end{array}
\end{equation}
where the products in the right-hand side are understood as the products of the actions rather
than as the matrix products. Furthermore, the intersection
of the groups $\Spin_{10}^+(F)$ and $F^{16}$ is trivial: the action of $\Spin_{10}^+(F)$
preserves the decomposition $\J = \J_{1}^c \oplus \J_{16}^{AB} \oplus \J_{10}^{abC}$,
while any non-trivial action of the elementary abelian group $F^{16}$ fails to do so.
Indeed, a general element in $F^{16}$ has the form $M_x' \cdot L_y''$ for some $x,y \in 
\OO$ and it sends an Albert vector $(a,b,c\mid A,B,C)$ to 
\begin{equation*}
	(a,b,c+a\NN(y)+b\NN(x) + \Tr(By) + \Tr(\ovx A) + \Tr(\ovx \ovC y) \mid
			A+\ovC y + bx, B+a\ovy + \ovx \ovC, C ).
\end{equation*}
So, we have shown that the actions of the elements $M_x', L_x'', M_x, L_x$ on $\J$ generate
a group of shape $F^{16}\cn\Spin_{10}^+(F)$, as $x$ ranges through a split algebra $\OO$.

Next, we consider the white point $\langle v \rangle$ spanned by our white vector. The stabiliser
in $\SE_6(F)$ of $\langle v \rangle$, where $v = (0,0,1\mid 0,0,0)$, maps 
$v$ to $\lambda v$ for some non-zero 
$\lambda \in F$. For instance, this can be achieved by the elements 
\begin{equation}
	\label{eq:puprime}
	P_{u^{-1}}' = \diag(1_{\OO},u^{-1},u) = \begin{bmatrix}
		1 & 0 & 0 \\
		0 & u^{-1} & 0 \\
		0 & 0 & u
	\end{bmatrix}	 
\end{equation}
with 
$u$ being an invertible octonion of arbitrary norm. 
Indeed, any such element $P_{u^{-1}}'$ sends
$(0,0,1\mid 0,0,0)$ to $(0,0,\NN(u)\mid 0,0,0)$ and since $\NN(u)$ can be any non-zero 
field element, we get an abelian group $F^{\times}$ on top of the vector stabiliser. This finishes
the proof of the main theorem in this section. 

Now, since the vector stabiliser is generated by the actions of $M_x$, $L_x$, $M_x'$,
$L_x''$ on $\J$, and the subgroup of $\SE_6(F)$ generated by 
$M_x,M_x',M_x'',L_x,L_x',L_x''$ acts transitively on the white points, 
we make the following conclusion. 

\begin{theorem}
The group $\SE_6(F)$ is generated by the actions
of $M_x,M_x',M_x''$ and $L_x,L_x',L_x''$ on $\J$ as $x$ ranges through $\OO$. 
\end{theorem}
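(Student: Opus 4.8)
The plan is to run a Frattini-type argument, combining the transitivity of the six families on white points with the computation of the white-point stabiliser. Write $H \leqslant \SE_6(F)$ for the subgroup generated by the actions of $M_x,M_x',M_x'',L_x,L_x',L_x''$ as $x$ ranges over $\OO$. By Theorem \ref{theorem:1_white_stab} the stabiliser in $\SE_6(F)$ of the white vector $v = (0,0,1\mid 0,0,0)$ is generated by the actions of $M_x,L_x,M_x',L_x''$, and these are four of the six families, so this vector stabiliser already lies in $H$. Moreover $H$ acts transitively on white points, as recorded just before the statement. If I can show that $H$ contains the full stabiliser of the white \emph{point} $\langle v\rangle$, then the argument closes: given any $g\in\SE_6(F)$, transitivity of $H$ on white points provides $h\in H$ with $h\langle v\rangle = g\langle v\rangle$, whence $h^{-1}g$ stabilises $\langle v\rangle$ and therefore lies in $H$, giving $g = h(h^{-1}g)\in H$ and so $H = \SE_6(F)$.

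So the one remaining point is to realise the ``toral'' part of the point stabiliser inside $H$. The stabiliser of $\langle v\rangle$ is the vector stabiliser extended by the scalings $v\mapsto\lambda v$ ($\lambda\in F^{\times}$), which are effected by the elements $P_{u^{-1}}' = \diag(1_{\OO},u^{-1},u)$ of \eqref{eq:puprime} as $u$ runs over the invertible octonions. I would obtain these as products of generators already in $H$. For a single invertible octonion $u$ the subalgebra $F[u] = F\cdot 1_{\OO} + F\cdot u$ is sociable: the associator is alternating on an alternative algebra, so it vanishes whenever two of its arguments lie in $\{1_{\OO},u\}$. Hence the matrix $\diag(1_{\OO},u,u^{-1})$, whose entries all lie in the sociable subalgebra $F[u]$, acts on $\J$, and the product of its matrix factors coincides with the product of the corresponding actions. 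Applying the usual $\SL_2$-factorisation of a diagonal matrix into elementary ones entrywise over $F[u]$ gives
\[
\diag(1_{\OO},u,u^{-1}) = M_{u-1}'\,L_1'\,M_{u^{-1}-1}'\,L_{-u}',
\]
a product of members of the $M'$- and $L'$-families, hence an element of $H$. This element fixes $\J_{10}^{abC}$ away from the $c$-coordinate and sends $v\mapsto \NN(u)^{-1}v$; since $\OO$ is split, $\NN$ is onto $F^{\times}$, so these elements realise every scaling $v\mapsto\lambda v$. Therefore the stabiliser of $\langle v\rangle$ is contained in $H$, and the Frattini argument above finishes the proof.

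The main obstacle is precisely this last step: the vector stabiliser is handed to us by Theorem \ref{theorem:1_white_stab}, but one must check that the extra factor $F^{\times}$ sitting on top of it --- the scalings $P_{u^{-1}}'$ --- is not a genuinely new ingredient but already lies in the group generated by the six elementary families. The two supporting facts are the sociability of $F[u]$ (so that $\diag(1_{\OO},u,u^{-1})$ is a legitimate, well-behaved action rather than an ill-defined product over a non-associative algebra) and the elementary $\SL_2$-identity factoring a diagonal matrix into transvections, applied entrywise over that commutative associative subalgebra. Everything else is the standard principle that a transitive subgroup containing a point stabiliser is the whole group.
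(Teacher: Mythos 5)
Your proof is correct and follows essentially the same route as the paper, which likewise deduces the theorem from the transitivity on white points of the subgroup generated by the six families together with the identification of the white-vector stabiliser as the group generated by $M_x$, $L_x$, $M_x'$, $L_x''$. Your additional step --- verifying via the sociability of $F[u]$ that the scaling elements $\diag(1_{\OO},u,u^{-1}) = M_{u-1}'\,L_1'\,M_{u^{-1}-1}'\,L_{-u}'$ already lie in the subgroup, so that it contains the full \emph{point} stabiliser $F^{16}\cn\Spin_{10}^+(F).F^{\times}$ rather than merely the vector stabiliser --- is exactly what is needed to make the Frattini argument close (transitivity on white points only reduces an arbitrary element of $\SE_6(F)$ to the point stabiliser), a step the paper's one-line justification leaves implicit.
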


If $F = \Fq$, the stabiliser of a white vector is a group of shape $q^{16}\cn\Spin_{10}^+(q)$, while the stabiliser
of a white point is a group of shape $q^{16}\cn\Spin_{10}^+(q).\CC_{q-1}$. As a
 consequence, we now have:
\begin{equation}
	|\SE_6(q)| = q^{36}(q^{12}-1)(q^9-1)(q^8-1)(q^6-1)(q^5-1)(q^2-1).
\end{equation}

We obtain $\EE_6(q)$ as the quotient of $\SE_6(q)$ by any scalars it contains.
Note that $\SE_6(q)$ contains non-trivial scalars if and only if $q \equiv 1 \Mod{3}$, so
\begin{equation}
	|\EE_6(q)| = \frac{1}{\gcd(3,q-1)} 
				q^{36}(q^{12}-1)(q^9-1)(q^8-1)(q^6-1)(q^5-1)(q^2-1).
\end{equation}

\section{Simplicity of $\EE_6(F)$} 

The classical way of showing the simplicity of certain groups is the following lemma.

\begin{lemma}[Iwasawa]
    If $G$ is a perfect group acting faithfully and primitively 
    on a set $\Omega$, and the point stabilizer $H$ has a normal
    abelian subgroup $A$ whose conjugates generate $G$, then
    $G$ is simple.
\end{lemma}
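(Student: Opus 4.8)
The plan is to take an arbitrary nontrivial normal subgroup $N \trianglelefteq G$ and prove $N = G$; since $G$ is nontrivial (it acts primitively, so $|\Omega| \geq 2$), this yields simplicity. The first step exploits primitivity: the orbits of $N$ on $\Omega$ form a $G$-invariant partition, hence a block system, because $g(Nx) = N(gx)$ for every $g \in G$. Primitivity forces this system to be trivial, and since the action is faithful and $N \neq 1$, the orbits cannot all be singletons; therefore $N$ is transitive. Writing $H = G_\alpha$ for a point stabiliser, transitivity of $N$ gives the factorisation $G = NH$ in the usual way: any $g$ satisfies $g\alpha = n\alpha$ for some $n \in N$, so $n^{-1}g \in H$.

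The heart of the argument is then to pass to the quotient $G/N$ and show it is abelian. Let $\bar{A}$ denote the image $AN/N$ of the normal abelian subgroup $A \trianglelefteq H$. I would first check that $\bar{A}$ is normal in $G/N$: given $g \in G$, factor $g = nh$ with $n \in N$, $h \in H$; then $gAg^{-1} = n(hAh^{-1})n^{-1} = nAn^{-1}$ since $A \trianglelefteq H$, and modulo $N$ the conjugation by $n \in N$ is trivial, so $\bar{A}^{\bar{g}} = \bar{A}$. Because the conjugates of $A$ generate $G$, their images generate $G/N$; but every such image is a conjugate of $\bar{A}$ and hence equals $\bar{A}$. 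Thus $G/N = \bar{A}$ is abelian. Finally, $G/N$ is a quotient of the perfect group $G$, so it too is perfect; a perfect abelian group is trivial, whence $N = G$.

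The steps are individually short, so the main thing to get right is the logical orchestration rather than any computation. The one genuinely delicate point is the normality of $\bar{A}$ in $G/N$: it is precisely here that all three hypotheses conspire, namely the factorisation $G = NH$ (from primitivity together with transitivity of $N$), the normality $A \trianglelefteq H$, and the fact that conjugation by the $N$-part collapses in the quotient. I would take care to state explicitly that faithfulness is used only to rule out the singleton block system, and that perfectness is used only at the very end to kill the abelian quotient; isolating these dependencies makes the proof transparent and shows why each hypothesis is needed.
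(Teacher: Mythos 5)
Your proof is correct. The paper itself states Iwasawa's lemma without proof, treating it as a classical result, so there is no argument in the text to compare against; your write-up is the standard proof (normal subgroup $N\neq 1$ is transitive by primitivity plus faithfulness, hence $G=NH$, hence $AN/N$ is a normal abelian subgroup of $G/N$ containing the images of all conjugates of $A$, so $G/N$ is abelian, and perfectness kills it), and all the steps, including the slightly delicate verification that $nAn^{-1}\subseteq AN$ so that $\bar{A}$ is well-defined and normal in $G/N$, are carried out correctly.
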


First, we show that the subgroup of $\SE_6(F)$ stabilising 
all the white points simultaneously acts on $\J$ by scalar multiplications, and hence
the action of $\E_6(F)$ on the set of white points is faithful.

\begin{lemma}
	The subgroup in $\SE_6(F)$ stabilising simultaneously all white points
	is the group of scalars.  
\end{lemma}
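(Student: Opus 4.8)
The plan is to prove the two inclusions. One is immediate: a scalar map $X \mapsto \mu X$ preserves $\fdet$ precisely when $\mu^3 = 1$, and any such map fixes every $1$-space, in particular every white point, so the group of scalars $\{\,\mu\cdot\II : \mu^3 = 1\,\}$ lies inside the stabiliser of all white points. For the reverse inclusion, let $g \in \SE_6(F)$ fix every white point. Since a white point is a $1$-dimensional space fixed setwise by $g$, for every white vector $X$ we have $Xg = \lambda_X X$ for some $\lambda_X \in F^{\times}$ depending only on $\langle X\rangle$. The goal is to show $\lambda_X$ is independent of $X$: once $Xg = \mu X$ for a single $\mu$ and all white $X$, and white vectors span $\J$, it follows that $g = \mu\cdot\II$ and then $\mu^3 = 1$ because $g$ preserves $\fdet$, so $g$ is a scalar.

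The engine is an elementary observation: \emph{if $X$ and $Y$ are linearly independent white vectors whose sum $X+Y$ is again white, then $\lambda_X = \lambda_Y$.} Indeed, applying $g$ to $X+Y$ gives $\lambda_{X+Y}(X+Y) = \lambda_X X + \lambda_Y Y$, and independence forces $\lambda_X = \lambda_{X+Y} = \lambda_Y$. I would therefore exhibit a spanning set of white vectors that is connected under the relation ``independent, with white sum'', for then all the scalars coincide on that set. The natural choice is the three diagonal vectors $e_1 = (1,0,0\mid 0,0,0)$, $e_2 = (0,1,0\mid 0,0,0)$, $e_3 = (0,0,1\mid 0,0,0)$ together with the pure off-diagonal vectors $(0,0,0\mid A,0,0)$, $(0,0,0\mid 0,B,0)$, $(0,0,0\mid 0,0,C)$ for isotropic $A,B,C$; all satisfy the whiteness conditions. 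Since $\OO$ is split its norm has Witt index $4$ — in the chosen basis every $e_i$ is isotropic — so the isotropic octonions span $\OO$, and this set spans $\J_3^{abc}\oplus\J_8^A\oplus\J_8^B\oplus\J_8^C = \J$.

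The step I expect to be the crux is verifying connectivity, which reduces to short checks of the whiteness conditions on sums. For any isotropic $A$ the sum $e_2 + (0,0,0\mid A,0,0) = (0,1,0\mid A,0,0)$ is white and $e_3 + (0,0,0\mid A,0,0) = (0,0,1\mid A,0,0)$ is white, so every pure $A$-vector is adjacent to both $e_2$ and $e_3$; symmetrically every pure $B$-vector is adjacent to $e_1$ and $e_3$ (via $(1,0,0\mid 0,B,0)$ and $(0,0,1\mid 0,B,0)$) and every pure $C$-vector to $e_1$ and $e_2$ (via $(1,0,0\mid 0,0,C)$ and $(0,1,0\mid 0,0,C)$). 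These same vectors link the idempotents, e.g.\ $e_2 \sim (0,0,0\mid A,0,0) \sim e_3$, so the entire spanning set lies in one connected component. Hence $\lambda_X$ equals a common value $\mu$ on a spanning set, giving $g = \mu\cdot\II$ with $\mu^3 = 1$, i.e.\ $g$ is a scalar. The only genuine work is the whiteness bookkeeping and the spanning claim; conceptually one is just asserting that the white vectors form a connected, spanning configuration. (In the non-split case the pure off-diagonal white vectors vanish, and one instead uses the richer white vectors $(0,1,\NN(A)\mid A,0,0)$ together with connectivity of the point-line geometry of white points, but the logical skeleton is identical.)
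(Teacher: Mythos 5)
Your split-case argument is correct and close in spirit to the paper's: both exploit the fact that $g$ acts diagonally on any family of white vectors spanning $1$-spaces it fixes, and then use a white vector that is a linear combination of other white vectors to force the eigenvalues to agree. The implementations differ. The paper diagonalises $g$ on a white basis of $\J_{10}^{abC}$, expands the single white vector $(a,b,0\mid 0,0,C_1+\cdots+C_8)$ with almost all coefficients nonzero in that basis, and then transports the conclusion to the rest of $\J$ by triality; this forces a separate treatment of $F=\mathbb{F}_2$, where the needed freedom in choosing $a,b$ is absent. Your pairwise-sum graph covers all $27$ dimensions at once and needs no case split on the field, which is a genuine (if small) simplification in the split case.

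The gap is the non-split case, and your parenthetical does not repair it: the ``logical skeleton'' is not identical there, because the engine itself breaks. If $\NN$ is anisotropic, two linearly independent white vectors essentially never have a white sum. Concretely, $(1,0,0\mid 0,0,0)+(0,1,0\mid 0,0,0)=(1,1,0\mid 0,0,0)$ is grey; $(0,1,\NN(A)\mid A,0,0)+\lambda(0,0,1\mid 0,0,0)$ is white only if $\NN(A)=\NN(A)+\lambda$, i.e.\ $\lambda=0$; and $(0,1,\NN(A)\mid A,0,0)+(0,1,\NN(A')\mid A',0,0)$ is white only if $\NN(A-A')=0$, i.e.\ $A=A'$. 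So in the anisotropic case your graph has no edges, and no connectivity argument of this shape can even begin. Since the lemma (and the simplicity theorem it feeds) is stated for an arbitrary octonion algebra $\OO$, this case must be covered. The fix is essentially what the paper does: use white vectors such as $(1,\NN(C),0\mid 0,0,C)$, which exist for every $C$ in any octonion algebra and which, when expanded in a white eigenbasis of $\J_{10}^{abC}$, have many nonzero coefficients; comparing coefficients in $vg=\lambda v$ then identifies the eigenvalues without ever requiring a sum of two white vectors to be white.
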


\begin{proof}
 Consider the action of
this stabiliser on $\J_{10}^{abC}$ and pick the basis
\begin{equation}
	\begin{array}{r@{\;}c@{\;}l}
		v_1 & = & (1,0,0\mid 0,0,0), \\
		v_2 & = & (0,1,0\mid 0,0,0), \\
		v_{i+2} & = & (a_i, b_i,0\mid 0,0,C_i),
	\end{array}
\end{equation} 
where $1 \leqslant i \leqslant 8$ and $C_i \ovC_i = a_i b_i$. Since in particular we stabilise
$\langle v_1 \rangle,\ldots,\langle v_{10}\rangle$, the action on $\J_{10}^{cAB}$ is that of a 
$10 \times 10$ diagonal matrix $\diag(\lambda_1, \ldots, \lambda_{10})$ 
with respect to the basis $\{v_1, \ldots, v_{10}\}$. Consider 
an Albert vector $v = (a,b,0\mid 0,0,C)$, where $C = C_1 + \cdots + C_8$ and 
$a,b$ are such that $v$ is white, i.e. $C\ovC = ab$. Now, if $F\neq \mathbb{F}_2$, we can choose
$a,b \in F$ in such a way that $v$ can be written as a linear combination 
$v = \alpha v_1 + \beta v_2 + v_3 + \cdots + v_{10}$ with $\alpha \neq 0$. The stabiliser of 
all white points maps $v$ to $\lambda v$ for some non-zero $\lambda \in F$, so this ensures
that $\lambda = \lambda_1 = \lambda_3 = \cdots = \lambda_{10}$. We now adjust the chosen values
of $a$ and $b$ to obtain a linear combination with $\beta \neq 0$, and so $\lambda = 
\lambda_2 = \lambda_3 = \cdots \lambda_{10}$. It follows that the action on $\J_{10}^{abC}$ 
is just the multiplication by $\lambda$. 

When $F = \mathbb{F}_2$, we take $\OO$ to be the split octonion algebra with 
our favourite basis $\{ e_i\ \mid\ i \in \pm \{0,1,\omega,\ombb\}\}$. For the 
$10$-space $\J_{10}^{abC}$ we choose the basis 
\begin{equation}
	\begin{array}{r@{\;}c@{\;}l}
		v_1 & = & (1,0,0\mid 0,0,0), \\
		v_2 & = & (0,1,0\mid 0,0,0), \\
		v_{i+2} & = & (0,0,0\mid 0,0,e_i),
	\end{array}
\end{equation} 
and then proceed in the same manner. The vector $v = v_1 + \cdots + v_{10}$ is white and since 
there is a single choice for a non-zero scalar in $\mathbb{F}_2$, it is being fixed and 
the action on the whole $10$-space in this case is that of $\diag(1,\ldots, 1)$. 

Now, by using the triality element, we map $\J_{10}^{abC}$ to $\J_{10}^{bcA}$ and further
to $\J_{10}^{caB}$ and so we obtain that the stabiliser of all white points
acts on $\J$ by scalar multiplications. That is, the stabiliser of all the white points
is trivial in $\EE_6(F)$. 

\end{proof}

From Lemma \ref{lemma:1_white_primitive} we know that the action of $\EE_6(F)$ 
on the white points is primitive. We need to show that the group is perfect.

\begin{lemma}
	The group $\SE_6(F)$ is perfect.
\end{lemma}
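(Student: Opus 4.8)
The plan is to show that the commutator subgroup $[\SE_6(F),\SE_6(F)]$ already contains all six families of generators $M_x,M_x',M_x'',L_x,L_x',L_x''$; since these generate $\SE_6(F)$ by the generation theorem established just above, this forces $[\SE_6(F),\SE_6(F)]=\SE_6(F)$, which is exactly perfectness. The crucial input is the conjugation relations recorded immediately before this section, which I would interpret as exhibiting the root-type elements as honest commutators.

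First I would extract commutators from the relation $(M_x')^{M_y}=L_{-yx}''\cdot M_x'$. Rewriting it as $M_y^{-1}M_x'M_y=L_{-yx}''M_x'$ and rearranging gives $L_{-yx}''=M_y^{-1}M_x'M_y(M_x')^{-1}$, which is the commutator $[M_y,(M_x')^{-1}]$. Specialising $y=1_{\OO}$ and letting $x$ range over $\OO$, the index $-yx=-x$ ranges over all of $\OO$, so every $L_z''$ with $z\in\OO$ lies in $[\SE_6(F),\SE_6(F)]$. The symmetric relation $(L_x'')^{L_y}=M_{-yx}'\cdot L_x''$ yields in the same way $M_{-yx}'=[L_y,(L_x'')^{-1}]$, and taking $y=1_{\OO}$ shows every $M_z'$ lies in the commutator subgroup as well. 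I would emphasise that, because the choice $y=1_{\OO}$ already suffices, there is no lower bound needed on $|F|$: the argument runs uniformly over every field, including $\mathbb{F}_2$, where classical groups of small rank would normally fail to be perfect.

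Next I would propagate these two families to the remaining four using triality. The commutator subgroup is normal, hence stable under conjugation by $\tau\in\SE_6(F)$. Conjugation by $\tau$ cyclically permutes the three families $M_x\mapsto M_x'\mapsto M_x''\mapsto M_x$, and likewise $L_x\mapsto L_x'\mapsto L_x''\mapsto L_x$ (this is precisely the content of the earlier remark that $M_x'$ and $M_x''$ are obtained from $M_x$ by applying $\tau$). Consequently the $\tau$-conjugates of the two families $\{M_x',L_x''\}$ account for all six families of generators. Hence every generator of $\SE_6(F)$ lies in $[\SE_6(F),\SE_6(F)]$, and the group is perfect.

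The calculations are light, so I do not expect a serious obstacle; the one point needing care is the bookkeeping of the conjugation relations. I would verify the two relations $(M_x')^{M_y}=L_{-yx}''M_x'$ and $(L_x'')^{L_y}=M_{-yx}'L_x''$ directly from the explicit actions in (\ref{eq:1_q16_action}) — these are exactly the \emph{straightforward computation} alluded to above — and confirm by a single explicit conjugation that $\tau$ permutes the families cyclically as claimed. Once these two bookkeeping checks are in place, perfectness is immediate.
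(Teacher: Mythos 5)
Your proposal is correct and follows essentially the same route as the paper: both proofs exhibit every one of the six families of generators $M_x, M_x', M_x'', L_x, L_x', L_x''$ as commutators of such generators and then invoke the generation theorem. The only (harmless) difference is organisational: the paper writes down all six commutator identities explicitly, whereas you extract two of them (for $M_z'$ and $L_z''$) from the conjugation relations $(M_x')^{M_y} = L_{-yx}''\cdot M_x'$ and $(L_x'')^{L_y} = M_{-yx}'\cdot L_x''$ already recorded in the white-point-stabiliser section, and then obtain the remaining four families by conjugating with the triality element $\tau$, which is legitimate since the derived subgroup is normal and $\tau \in \SE_6(F)$ cyclically permutes the families.
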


\begin{proof}
	This does not present great difficulties. A very straightforward computation 
	shows that 
	\begin{equation*}
	\begin{array}{r@{\;}c@{\;}l}
		(L_{-1}'')^{-1} \cdot L_x' \cdot L_{-1}'' \cdot (L_x')^{-1} & 
								\text{ acts as } & M_x, \\
		(L_{-1})^{-1} \cdot L_x'' \cdot L_{-1} \cdot (L_x'')^{-1} & 
								\text{ acts as } & M_x', \\
		(L_{-1}')^{-1} \cdot L_x \cdot L_{-1}' \cdot (L_x)^{-1} & 
								\text{ acts as } & M_x'', \\
								
		(M_{-1}')^{-1} \cdot M_x'' \cdot M_{-1}' \cdot (M_x'')^{-1} & 
								\text{ acts as } & L_x, \\
		(M_{-1}'')^{-1} \cdot M_x \cdot M_{-1}'' \cdot (M_x)^{-1} & 
								\text{ acts as } & L_x', \\
		(M_{-1})^{-1} \cdot M_x' \cdot M_{-1} \cdot (M_x')^{-1} & 
								\text{ acts as } & L_x'', \\
	\end{array}
	\end{equation*}
	where as before $A \cdot B$ is understood as the product of the actions 
	by the matrices $A$ and $B$. Hence, every generator is in fact a commutator.
\end{proof}
Finally, using the Iwasawa's Lemma we obtain the following theorem.

\begin{theorem}
	The group $\EE_6(F)$ is simple. 
\end{theorem}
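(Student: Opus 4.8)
The plan is to apply Iwasawa's Lemma with $G = \EE_6(F)$ acting on the set $\Omega$ of white points. Four things must be verified: that $G$ is perfect, that the action on $\Omega$ is faithful, that it is primitive, and that the point stabiliser $H$ contains a normal abelian subgroup $A$ whose $G$-conjugates generate $G$. Three of these are already essentially in hand, so the work will concentrate on identifying the right $A$.

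First I would dispose of perfectness, faithfulness and primitivity. We have just shown that $\SE_6(F)$ is perfect, and $\EE_6(F)$ is by definition the quotient of $\SE_6(F)$ by its centre of scalars; since any quotient of a perfect group is perfect, $\EE_6(F)$ is perfect. The subgroup of $\SE_6(F)$ fixing every white point was shown to be exactly the group of scalars, so after passing to the quotient by the scalars the induced action of $\EE_6(F)$ on $\Omega$ is faithful. Finally, primitivity of this action is precisely the content of Lemma \ref{lemma:1_white_primitive}.

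The substantive step is the choice of $A$ and the generation condition. I would take $A$ to be the elementary abelian group $F^{16}$ inside the stabiliser of the white vector $v = (0,0,1\mid 0,0,0)$, generated by the actions of the $M_x'$ and $L_x''$ as furnished by Lemma \ref{lemma:1_q16}. By construction $A$ is abelian, and it is normal in the point stabiliser of shape $F^{16}\cn\Spin_{10}^+(F).F^\times$: the conjugation formulae displayed above show that the copy of $\Spin_{10}^+(F)$ generated by the $M_x$ and $L_x$ normalises $A$, while the diagonal elements $P_{u^{-1}}'$ generating the $F^\times$ on top normalise it as well. Since $A$ consists of genuinely unipotent elements it meets the scalars trivially, so its image in $\EE_6(F)$ is still an abelian group normal in $H$. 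To see that the conjugates of $A$ generate $G$, I would conjugate by the triality element $\tau$: because $\tau$ cyclically permutes the three diagonal positions, it carries the pair $\{M_x', L_x''\}$ to the analogous pairs attached to the other two coordinates, so $A$, $A^\tau$ and $A^{\tau^2}$ between them contain the actions of all six families $M_x, M_x', M_x'', L_x, L_x', L_x''$. By the generation theorem just established these generate $\SE_6(F)$, hence their images generate $\EE_6(F)$.

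The hard part will be keeping straight that the single subgroup $A$ simultaneously satisfies normality in $H$ and the property that its conjugates fill out all of $G$, and in particular checking that both properties survive the passage to the quotient $\EE_6(F)$; once $A$ is pinned down as the $F^{16}$ and triality is used to spread it across the three coordinates, these reduce to the commutator identities and the generation theorem already in place. With all four hypotheses of Iwasawa's Lemma verified, we conclude that $\EE_6(F)$ is simple.
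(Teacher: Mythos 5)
Your proposal is correct and follows exactly the route the paper takes: Iwasawa's Lemma applied to the primitive, faithful action on white points, with perfectness from the commutator identities and the normal abelian subgroup taken to be the $F^{16}$ inside the point stabiliser $F^{16}\cn\Spin_{10}^+(F).F^{\times}$, spread over the three coordinates by triality to generate the whole group. The paper leaves the choice of $A$ and the generation argument implicit, so your explicit verification is a faithful filling-in of the same proof.
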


\section{Some related geometry}

\label{section:1_geometry}

In this section we discuss some of the underlying geometry related to white 
points. Consider first a $10$-dimensional
space $\J_{10}^{abC}$ and note that it 
contains only white and grey vectors. In this section we are interested in finding the 
stabiliser of $\J_{10}^{abC}$, discovering some of its properties, and also finding
the joint stabiliser of such a $10$-space and a white point. 
Note that throughout the whole section $\OO$ is a split 
octonion algebra. 

First, we shall be interested in the stabiliser in $\SE_6(F)$ of $\J_{10}^{abC}$. 
The following lemma helps to get an idea what the stabiliser
we are looking for can be. 

\begin{lemma}
	\label{lemma:1_white_space_stab}
	The stabiliser in $\SE_6(F)$ of $\J_{10}^{abC}$ contains
	a subgroup of shape 
	\begin{equation}
		F^{16}\cn\Spin_{10}^+(F).F^{\times}.
	\end{equation}
\end{lemma}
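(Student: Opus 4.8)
The plan is to produce the required subgroup explicitly, as the group generated by the actions on $\J$ of the six families $M_x$, $L_x$, $M_x''$, $L_x'$ (with $x$ ranging over $\OO$) together with the elements $P_{u^{-1}}'=\diag(1_{\OO},u^{-1},u)$ (with $u$ an invertible octonion). The whole point is that every one of these families preserves the $10$-space $\J_{10}^{abC}$, so the group they generate lies inside its stabiliser, and that together they assemble into a group of the advertised shape $F^{16}\cn\Spin_{10}^+(F).F^\times$.

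First I would record how the ``frame'' $\Spin_{10}^+(F).F^\times$ sits relative to $\J_{10}^{abC}$. By Theorem~\ref{theorem:1_splin10plus} the actions of $M_x$ and $L_x$ generate a copy of $\Spin_{10}^+(F)$, and, as established in the construction preceding Lemma~\ref{lemma:1_omega10plus}, this group preserves the decomposition $\J=\J_1^c\oplus\J_{16}^{AB}\oplus\J_{10}^{abC}$, in particular preserving $\J_{10}^{abC}$. A direct computation shows $P_{u^{-1}}'$ acts on $\J_{10}^{abC}$ by $(a,b,0\mid 0,0,C)\mapsto(a,\NN(u)^{-1}b,0\mid 0,0,Cu^{-1})$, rescaling $\QQ_{10}$ by $\NN(u)^{-1}$; since $\OO$ is split, $\NN$ is onto $F^\times$ on invertible octonions, so these contribute the factor $F^\times$ and also preserve $\J_{10}^{abC}$. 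As $M_x,L_x,P_{u^{-1}}'$ all fix $\langle v\rangle$ with $v=(0,0,1\mid 0,0,0)$ and all respect the above decomposition, $\langle M_x,L_x,P_{u^{-1}}'\rangle$ is a complement to the $F^{16}$ inside the white-point stabiliser of Theorem~\ref{theorem:1_white_stab}, hence a copy of $\Spin_{10}^+(F).F^\times$ preserving $\J_{10}^{abC}$.

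The key new ingredient is the choice of the elementary abelian $F^{16}$, which must be different from the one ($\langle M_x',L_x''\rangle$) used for the white-point stabiliser. Writing Albert vectors as $3\times 3$ matrices, every $X\in\J_{10}^{abC}$ has vanishing third row and third column. Since $M_x''=I+xE_{31}$ and $L_x'=I+xE_{32}$ act by $X\mapsto\ovM^{\T}XM$ and differ from the identity only in the third row, one checks at once that $XE_{31}=XE_{32}=0$ and $E_{13}X=E_{23}X=0$ on $\J_{10}^{abC}$, so $M_x''$ and $L_x'$ fix $\J_{10}^{abC}$ pointwise. By Lemma~\ref{lemma:1_q16}(b), applied to the pair $(L',M'')$ from the second list, the actions of $L_x'$ and $M_x''$ generate a group $F^{16}$, and it too preserves $\J_{10}^{abC}$. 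Thus all six families lie in the stabiliser of $\J_{10}^{abC}$.

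It then remains to verify that $\langle M_x,L_x,P_{u^{-1}}'\rangle\cong\Spin_{10}^+(F).F^\times$ normalises $F^{16}=\langle L_x',M_x''\rangle$ and meets it trivially, so that the generated group has shape $F^{16}\cn\Spin_{10}^+(F).F^\times$. Triviality of the intersection is immediate: $F^{16}$ fixes $\J_{10}^{abC}$ pointwise, whereas a nontrivial element fails to preserve $\J=\J_1^c\oplus\J_{16}^{AB}\oplus\J_{10}^{abC}$ (for instance $M_x''$ sends $(0,0,0\mid A,B,0)$ to $(\Tr(\ovx B),0,0\mid A,B,\ovx\ovA)$, leaking out of $\J_{16}^{AB}$), while every element of $\Spin_{10}^+(F).F^\times$ respects that decomposition. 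The main obstacle is the normalisation: I would establish conjugation relations expressing $(M_x'')^{M_y}$, $(M_x'')^{L_y}$, $(L_x')^{M_y}$, $(L_x')^{L_y}$ and the conjugates by $P_{u^{-1}}'$ as products inside $\langle L_x',M_x''\rangle$. These are entirely analogous to the relations already displayed for the white-point stabiliser and are linked to them by the triality/duality symmetry of the construction; I would check one representative identity, such as $(M_x'')^{M_y}$ acting as $L_{xy}'\,M_x''$, by a direct action computation and obtain the remaining relations by the same method. Granting these, the six families generate $F^{16}\cn\Spin_{10}^+(F).F^\times$ inside the stabiliser in $\SE_6(F)$ of $\J_{10}^{abC}$, as required.
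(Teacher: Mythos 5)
Your proposal is correct and follows essentially the same route as the paper: exhibit the stabilising families $M_x$, $L_x$ (giving $\Spin_{10}^+(F)$ via Theorem~\ref{theorem:1_splin10plus}), the second elementary abelian group $F^{16}=\langle M_x'',L_x'\rangle$ from Lemma~\ref{lemma:1_q16}(b), and the elements $P_{u^{-1}}'$ supplying the $F^{\times}$ on top. The only differences are presentational — you justify that $M_x''$ and $L_x'$ fix $\J_{10}^{abC}$ pointwise by a matrix-unit argument where the paper writes out the coordinate actions, and you are somewhat more explicit about the normalisation and trivial-intersection checks that the paper handles by appeal to the analogous computation in Theorem~\ref{theorem:1_white_stab}.
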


\begin{proof}
	We take an arbitrary vector $(a,b,0 \mid 0,0,C)$ in 
	$\J_{10}^{abC}$ and look how the elements $M_x$, $M_x'$,
	$M_x''$, $L_x$, $L_x'$, and $L_x''$ act on it:
	\begin{equation*}
		\begin{array}{r@{\;}c@{\;}l}
			M_x: (a,b,0\mid 0,0,C) & \mapsto & 
				(a, b + a\NN(x) + \Tr(\ovx C), 0 \mid
					0,0, C + a x), \\
			
			M_x': (a,b,0\mid 0,0,C) & \mapsto & 
				(a, b, a \NN(x) \mid b x, \ovx \ovC, C), \\
				
			M_x'': (a,b,0 \mid 0,0,C) & \mapsto & 
				(a, b, 0 \mid 0, 0, C), \\
				
			L_x: (a,b,0 \mid 0,0,C) & \mapsto & 
				(a + b\NN(x) + \Tr(Cx), b, 0 \mid
					0, 0, C + b\ovx), \\
					
			L_x': (a,b,0 \mid 0,0,C) & \mapsto & 
				(a, b, 0 \mid 0,0,C), \\
				
			L_x'': (a,b,0 \mid 0,0,C) & \mapsto & 
				(a, b, a\NN(x) + \Tr(B x) \mid
					\ovC x, a\ovx, C).
		\end{array}
	\end{equation*}
	It is visibly clear now that the elements $M_x$, $L_x$,
	$M_x''$, $L_x'$ preserve $\J_{10}^{abC}$. We have been in 
	a similar situation before 
	(Theorem \ref{theorem:1_white_stab}), so we just note here
	that the abelian group $F^{16}$ generated by the actions of
	$M_x''$ and $L_x'$ is different from the one generated by
	$M_x'$ and $L_x''$ in the theorem we refer to. Thus, 
	the stabiliser of $\J_{10}^{abC}$ contains a group of
	shape $F^{16}\cn\Spin_{10}^+(F)$. 
	
	Finally, we notice that each of the elements $P_u$, $P_u'$,
	$P_u''$ preserve $\J_{10}^{abC}$. It is straightforward to see
	that for any invertible octonion $u$, $P_u \cdot P_u' \cdot
	P_u''$ acts on $\J$ as the identity matrix. Therefore we only
	consider the action on $\J_{10}^{abC}$ of two of them, say
	$P_u$ and $P_u'$, and since $P_u$ acts on $\J$ as
	$M_{u-1} \cdot L_1 \cdot M_{u^{-1}-1} \cdot L_{-u}$, 
	we conclude that they represent elements of 
	$\Spin_{10}^+(F)$, which we already have as a part of the 
	stabiliser, so it is enough to consider the elements $P_u'$.
	
	Note that the elements $P_u'$, where $u$ is an arbitrary
	invertible octonion, preserve $\J_{10}^{abC}$:
	\begin{equation*}
		\begin{array}{r@{\;}c@{\;}l}
			P_u': (a,b,0\mid 0,0,C) & \mapsto &
		(a,b \NN(u), 0 \mid 0,
			0, C u), 
		\end{array}
	\end{equation*}
	so we get $F^{\times}$ on top of the stabiliser, 
	and the result follows.
\end{proof}

The following theorem strengthens this result:
we prove that the stabiliser of $\J_{10}^{abC}$ is 
precisely a group of shape $F^{16}\cn\Spin_{10}^+(F).F^{\times}$.

\begin{theorem}
	\label{theorem:1_white_space_stab}
	The stabiliser in $\SE_6(F)$ of 
	$\J_{10}^{abC}$ is a subgroup of
	shape
	\begin{equation}
		F^{16}\cn\Spin_{10}^+(F).F^{\times},
	\end{equation}
	generated by the actions on $\J$ of the elements 
	$M_x$, $M_x''$, $L_x$, $L_x'$ as $x$ ranges through $\OO$,
	and $P_u'$ as $u$ ranges through invertible octonions in 
	$\OO$.
\end{theorem}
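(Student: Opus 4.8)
The plan is to show that the group $S$ of the statement, already known to contain $H:=\langle M_x,M_x'',L_x,L_x',P_u'\rangle$ of the asserted shape by Lemma~\ref{lemma:1_white_space_stab}, cannot be any larger, by analysing its restriction to $\J_{10}^{abC}$. The first ingredient is that $S$ automatically stabilises a canonical flag. Since the mixed form $M$ is a polarisation of $\fdet$ it is $\SE_6(F)$-invariant, and for $X=(a,b,0\mid 0,0,C)\in\J_{10}^{abC}$ and arbitrary $Y=(d,e,f\mid D,E,F)$ a direct substitution gives $M(Y,X)=f\,(ab-C\ovC)$. As $\J_{10}^{abC}$ contains grey vectors, for which $ab-C\ovC\neq 0$, the space $\{\,Y:M(Y,X)=0\text{ for all }X\in\J_{10}^{abC}\,\}$ is exactly $\J_{26}^{abABC}$. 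Hence $S$ preserves the flag $\J_{10}^{abC}<\J_{26}^{abABC}<\J$, whose successive quotients are the $10$-dimensional vector module $\J_{10}^{abC}$ carrying $\QQ_{10}=ab-C\ovC$, the half-spin module $\J_{26}^{abABC}/\J_{10}^{abC}\cong\J_{16}^{AB}$, and the line $\J/\J_{26}^{abABC}\cong\J_1^c$.

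Next I would study the restriction map $\rho\colon S\to\GL(\J_{10}^{abC})$. Because $\SE_6(F)$ preserves colour and $S$ fixes $\J_{10}^{abC}$ setwise, $\rho(S)$ preserves the white vectors of $\J_{10}^{abC}$, which are precisely the singular vectors of the non-degenerate plus-type form $\QQ_{10}$; as a linear map preserving the quadric of such a form is a similarity, $\rho(S)\leqslant\CGO_{10}^+(F)$. On the other hand Lemma~\ref{lemma:1_omega10plus} gives that $M_x,L_x$ induce $\Omega_{10}^+(F)$, the generators $M_x'',L_x'$ act trivially on $\J_{10}^{abC}$, and $P_u'$ scales $\QQ_{10}$ by $\NN(u)$, which runs over all of $F^\times$ since $\OO$ is split; thus $\rho(H)$ is a group $\Omega_{10}^+(F).F^\times$ in which the $F^\times$ realises every similarity factor and the isometries are exactly $\Omega_{10}^+(F)$. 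The crux is therefore to show that $\rho(S)$ contains no further isometries, i.e. neither reflections nor isometries of non-trivial spinor norm.

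This exclusion is the main obstacle, and it is exactly where the half-spin quotient $\J_{16}^{AB}$ earns its keep. The cubic term $\Tr(ABC)$ of $\fdet$ restricts to an $S$-equivariant pairing $\J_{16}^{AB}\times\J_{16}^{AB}\to\J_{10}^{abC}$ coupling the half-spin section to the vector module; together with the $F$-linear action of $S$ on both, this realises each $g\in S$ inside the conformal spin group, whose image in $\GO_{10}^+(F)$ is $\Omega_{10}^+(F)$ extended by similarity scalars. Concretely, a reflection interchanges the two half-spin modules and so cannot act on the single module $\J_{16}^{AB}$, while an isometry of non-trivial spinor norm fails to lift to $\Spin_{10}^+(F)$ and so cannot act $F$-linearly on $\J_{16}^{AB}$ compatibly; both are thereby excluded, giving $\rho(S)=\rho(H)$. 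In the paper's concrete, Lie-theory-free style one may instead reach the same conclusion by reducing to a joint stabiliser: $S$ is transitive on the white points of $\J_{10}^{abC}$, so it suffices to match $S\cap\mathrm{Stab}(\langle(1,0,0\mid 0,0,0)\rangle)$ with $H\cap\mathrm{Stab}(\langle(1,0,0\mid 0,0,0)\rangle)$ inside the fully explicit white-point stabiliser of Theorem~\ref{theorem:1_white_stab}, and to verify by direct computation that no element outside $H$ preserves $\J_{10}^{abC}$. In characteristic $2$ I would read $\Spin_{10}^+$ as $\Omega_{10}^+$ and replace determinant and spinor norm by the quasideterminant, as elsewhere in the paper.

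Finally I would identify $\ker(\rho|_S)$. Any $g\in S$ with $\rho(g)=\mathrm{I}$ fixes $\J_{10}^{abC}$ pointwise, in particular fixes the white vector $(1,0,0\mid 0,0,0)$, so $g$ lies in the white-vector stabiliser $F^{16}\cn\Spin_{10}^+(F)$ of Theorem~\ref{theorem:1_white_stab}, whose action on $\J$ is known explicitly. The coherence between the vector and half-spin actions then forces the $\Spin_{10}^+(F)$-constituent of $g$ to act trivially on $\J_{10}^{abC}$, hence to lie in the centre, so $g\in F^{16}.\langle P_{-1}\rangle=\ker(\rho|_H)$. Combining $\rho(S)=\rho(H)$ with $\ker(\rho|_S)=\ker(\rho|_H)$ yields $S=H$; that is, the stabiliser has shape $F^{16}\cn\Spin_{10}^+(F).F^\times$ and is generated by the stated elements. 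I expect the genuine difficulty to be concentrated entirely in the exclusion of reflections and non-trivial spinor norm in the third paragraph, everything else being either a direct computation or an appeal to the orthogonal-group results of Section~\ref{section:orthogonal} and to the white-point stabiliser already determined.
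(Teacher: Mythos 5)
Your lower bound (via Lemma~\ref{lemma:1_white_space_stab}), your observation that $M(Y,X)=f(ab-C\ovC)$ for $X\in\J_{10}^{abC}$ so that $S$ preserves the flag $\J_{10}^{abC}<\J_{26}^{abABC}<\J$, and your computation of $\rho(H)$ are all fine. The gap is in your third paragraph, which you yourself identify as the crux. The two exclusion claims are asserted, not proved: ``a reflection interchanges the two half-spin modules'' and ``an isometry of non-trivial spinor norm fails to lift to $\Spin_{10}^+(F)$ and so cannot act $F$-linearly on $\J_{16}^{AB}$ compatibly'' are facts about spin representations that this paper never develops, and the second is false as stated --- every element of $\SO_{10}^+(F)$ acts $F$-linearly on a half-spin module via the even Clifford group regardless of spinor norm; what fails is compatibility with the pairing into $\J_{10}^{abC}$ up to the correct scalar, and turning that into a proof requires exactly the kind of explicit analysis the paper does elsewhere (compare Lemmata~\ref{lemma:1_white_phipsi1}, \ref{lemma:1_white_phipsi2} and \ref{lemma:1_2_actions}). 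Your fallback suggestion also misfires: you propose matching joint stabilisers with the white point $\langle(1,0,0\mid 0,0,0)\rangle$, which lies \emph{inside} $\J_{10}^{abC}$; its joint stabiliser with $\J_{10}^{abC}$ is the much more complicated group of Lemma~\ref{lemma:1_stab_joint_1}, which in the paper is \emph{derived from} Theorem~\ref{theorem:1_white_space_stab}, so this route is both harder and close to circular.

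The paper's actual argument avoids all of this by choosing a white point \emph{outside} $\J_{26}^{abABC}$, namely $W=\langle(0,0,1\mid 0,0,0)\rangle$, whose full stabiliser $G_W=F^{16}\cn\Spin_{10}^+(F).F^{\times}$ is already known explicitly from Theorem~\ref{theorem:1_white_stab}. Writing $G_W=T_1H$ with $T_1=\langle M_x',L_y''\rangle\cong F^{16}$ normal and $H=\langle M_x,L_x,P_u'\rangle\cong\Spin_{10}^+(F).F^{\times}$, a short direct computation shows no non-trivial element of $T_1$ preserves $\J_{10}^{abC}$, so $S\cap G_W=H$. Then $T_2=\langle M_x'',L_y'\rangle\cong F^{16}$ lies in $S$ and acts sharply transitively on the white points spanned by vectors $(B\ovB/c,A\ovA/c,c\mid A,B,\ovB\ovA/c)$ with $c\neq 0$, i.e.\ on the possible images of $W$ under $S$; orbit--stabiliser gives $S=T_2\cdot(S\cap G_W)=F^{16}\cn\Spin_{10}^+(F).F^{\times}$. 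If you want to salvage your restriction-map strategy, you would need to replace the half-spin appeal with this kind of explicit computation; as written, the decisive step is missing.
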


\begin{proof}
	Consider the white point $W$ spanned by $(0,0,1 \mid 0,0,0)$. We are interested in the joint stabiliser of 
	$\J_{10}^{abC}$ and $W$. Theorem \ref{theorem:1_white_stab} tells us that the stabiliser in $\SE_6(F)$ of 
	$W$ has the shape $G_W = F^{16}\cn\Spin_{10}^+(F).F^{\times}$, and 
	$H \cong  \Spin_{10}^+(F).F^{\times}$, generated by the actions of $M_x$, $L_x$, $P_u'$, stabilises
	$\J_{10}^{abC}$. Note that the stabiliser of $\J_{10}^{abC}$ acts as  
	$\Omega_{10}^+(F).F^{\times}$ on $\J_{10}^{abC}$ since the normal subgroup $F^{16}$ and the 
	central involution (if any) of the standard complement $\Spin_{10}^+(F).F^{\times}$, generated
	by $L_x$, $M_x$, and $P_u'$, acts trivially thereon. 
	
	The normal subgroup $T_1 \cong F^{16}$ of $G_W$ is a left (or right) transversal 
	of $H$ in $G_W$. It is easy to see that no non-trivial element of $T_1$ stabilises $\J_{10}^{abC}$. Indeed, a general
	element in $T_1$ has the form $M_x' \cdot L_y''$ for some $x,y \in \OO$ and it sends \mbox{$(a,b,0 \mid 0,0,C) \in \J_{10}^{abC}$}
	to
	\begin{equation*}
		(a,b,a\NN(y) + b\NN(x) +  \Tr(\ovx \ovC y) \mid
			 \ovC y + bx, a\ovy + \ovx\ovC, C).
	\end{equation*}
	Therefore, such an element preserves $\J_{10}^{abC}$ if and only if the following conditions hold for arbitrary $a,b$, and $C$:
	\begin{equation*}
		\left.
		\begin{array}{r@{\;}c@{\;}l}
			a\NN(y) + b\NN(x) + \Tr(\ovx \ovC y) & = & 0, \\
			\ovC y + bx & = & 0, \\
			a\ovy + \ovx\ovC & = & 0.
		\end{array}
		\right\}
	\end{equation*}
	In particular, if we take $C = 0$, $b = 1$ then we obtain $x = 0$, and when $C = 0$, $a = 1$, we get $y = 0$. 
	
	Let $T_2$ be the subgroup of $\SE_6(F)$ generated by the actions on $\J$ of $M_x''$ and $L_y'$ as $x,y$ range through $\OO$. 
	It can be shown that $T_2$ is isomorphic to $F^{16}$; this proof is rather similar to the proof $T_1 \cong F^{16}$. 
	Consider the $26$-dimensional space $\J_{26}^{abABC}$ spanned by the $17$-spaces corresponding to the white vectors in 
	$\J_{10}^{abC}$.  Let \mbox{$(a,b,c \mid A,B,C)$} be a white vector outside $\J_{26}^{abABC}$, that is, with $c \neq 0$. The whiteness conditions
	imply that such a vector has the form $(B\ovB / c, A\ovA/c, c \mid A, B, \ovB \ovA / c)$. $T_2$ acts sharply transitively on white points
	spanned by these vectors. Therefore, the full stabiliser of $\J_{10}^{abC}$ is indeed 
	$F^{16}\cn\Spin_{10}^+(F).F^{\times}$.
\end{proof}

Next, we investigate the orbits of the stabiliser of $\J_{10}^{abC}$ on white vectors and white points. First, we consider white vectors
in $\J_{10}^{abC}$. An arbitrary non-zero Albert vector $(a,b,0\mid 0,0,C)$ is white if and only if $a b - C\ovC = 0$.
Note that the stabiliser of $\J_{10}^{abC}$ acts on the [quotient] $10$-space as $\Omega_{10}^+(F)$, and is transitive on such vectors. 
Therefore, the stabiliser of $\J_{10}^{abC}$ is transitive on white vectors in
$\J_{10}^{abC}$, and on the white points spanned by them. 

Suppose now that $(a,b,c \mid A,B,C)$ is a white vector such that \mbox{$(c,A,B) \neq (0,0,0)$}. We consider two cases.
First, assume $c \neq 0$. The element $M_{-c^{-1} B}''$ maps our vector to $(a - c^{-1}\NN(B), b, c \mid A,0, C-c^{-1}\ovB \ovA)$, and since
the latter is white, we have \mbox{$a - c^{-1}\NN(B) = 0  = C - c^{-1}\ovB\ovA$}, so our new vector is of the form
$(0,b,c \mid A,0,0)$. Similarly, we act on it by $L_{-c^{-1} \ovA}$ to obtain $(0,b-c^{-1}\NN(A),c \mid 0,0,0)$, and since this vector is also 
white, we have $b - c^{-1} \NN(A) = 0$, so the resulting vector is of the form $(0,0,c\mid 0,0,0)$, $c \neq 0$. 

Second, we consider the case $c = 0$, so we start with $(a,b,0\mid A,B,C)$ where $(A,B) \neq (0,0)$. The whiteness conditions are
\begin{equation}
	\left.
	\begin{array}{r@{\;}c@{\;}l}
		A\ovA & = & 0, \\
		B\ovB & = & 0, \\
		C\ovC & = & ab, \\
		AB & = & 0, \\
		BC & = & a\ovA, \\
		CA & = & b\ovB. 
	\end{array}
	\right\}
\end{equation}
If $(a,b) = 0$, then we choose a suitable $M_x$, $M_x''$, $L_x$, or $L_x'$ to map our vector to a vector of the form
$(a',b',0\mid A',B',C')$ with $(a',b') \neq (0,0)$. Thus, we may assume $(a,b) \neq (0,0)$. 

We again distinguish two cases. 
If $a \neq 0$, then we act on $(a,b,0\mid A,B,C)$ by $M_{-a^{-1} C}$ to get $(a,0,0\mid 0,B,0)$. Next, we act by 
$M_y''$ to get \mbox{$(a + \Tr(\ovy B), 0, 0 \mid 0,B,0)$}, and choosing a suitable $y$ we obtain $(0,0,0\mid 0,B,0)$.

If $b \neq 0$, the action by $L_{-b^{-1} \ovC}$ maps $(a,b,0\mid A,B,C)$ to \mbox{$(0,b,0\mid A,0,0)$} and 
similarly acting by a suitable $L_y'$, we obtain $(0,0,0\mid A,0,0)$. Recall that the duality element $\delta$ preserves 
$\J_{10}^{abC}$, so it is enough to consider vectors $(0,0,0\mid 0,0,0)$ with $A \in \OO$. By choosing a copy of $\Omega_8^+(F)$ 
generated by the elements $P_u'$ with $u$ being an octonion of norm one, we can map $(0,0,0\mid A,0,0)$ to 
$(0,0,0\mid e_0, 0,0)$.

Finally, it is impossible to map $(a,b,c\mid A,B,C)$ with $c \neq 0$ to $(a,b,0\mid A,B,C)$ using any of the
elements $M_x$, $M_x''$, $L_x$, and $L_x'$, so these two belong to different orbits. In other words, we have shown the following 
result.

\begin{theorem}
	\label{theorem:1_white_space_stab_orbits}
	The stabiliser in $\SE_6(F)$ of\phantom{;} $\J_{10}^{abC}$ has three orbits on white points:
	\begin{enumerate}[(i)]
		\item images of $\langle (1,0,0 \mid 0,0,0) \rangle $ under the action of the stabiliser,
		\item images of $\langle (0,0,0\mid e_0,0,0) \rangle$ under the action of the stabiliser,
		\item images of $\langle (0,0,1 \mid 0,0,0) \rangle$ under the action of the stabiliser.
	\end{enumerate}
\end{theorem}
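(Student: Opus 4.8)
The plan is to prove the two halves of the statement separately: first that every white point is carried by the stabiliser $S$ of $\J_{10}^{abC}$ into one of the three listed points, so that there are \emph{at most} three orbits; and second that the three representatives lie in genuinely distinct orbits, so that there are \emph{exactly} three. Throughout I would use the shape $S \cong F^{16}\cn\Spin_{10}^+(F).F^{\times}$ and the generating set $M_x, M_x'', L_x, L_x', P_u'$ from Theorem \ref{theorem:1_white_space_stab}.

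For the first half I would exploit the flag $\J_{10}^{abC} < \J_{26}^{abABC} < \J$ and split into cases according to where a white vector $X = (a,b,c\mid A,B,C)$ sits. If $\langle X\rangle \subseteq \J_{10}^{abC}$, then $\langle X\rangle$ is a white point of the $10$-space, and since $S$ acts on $\J_{10}^{abC}$ through a copy of $\Omega_{10}^+(F)$ that is transitive on the nonzero isotropic vectors of the plus-type form $ab - C\ovC$, it is carried to $\langle(1,0,0\mid 0,0,0)\rangle$; this is orbit (i). If $c \neq 0$, the reductions by $M_{-c^{-1}B}''$ and $L_{-c^{-1}\ovA}$ recorded just above the statement strip off the octonionic coordinates, and the whiteness relations force the surviving scalars to vanish, yielding $(0,0,c\mid 0,0,0)$ and hence orbit (iii). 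If $c = 0$ but $(A,B)\neq(0,0)$, then suitable elements among $M_x, M_x'', L_x, L_x'$ bring us to $(a,b)\neq(0,0)$; the elements $M_{-a^{-1}C}$ or $L_{-b^{-1}\ovC}$, followed by a suitable $M_x''$ or $L_x'$ and the duality element $\delta$, reduce to $(0,0,0\mid A,0,0)$ with $A$ a nonzero isotropic octonion; and finally the copy of $\Omega_8^+(F)$ generated by the $P_u'$ with $\NN(u)=1$, being transitive on nonzero isotropic octonions, sends $A$ to $e_0$. This is orbit (ii), and the three cases are mutually exclusive and exhaustive for white points.

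For the second half I would produce $S$-invariant subspaces that separate the three candidates. The space $\J_{10}^{abC}$ is invariant by definition of $S$, and $(1,0,0\mid 0,0,0)$ lies in it while neither $(0,0,0\mid e_0,0,0)$ nor $(0,0,1\mid 0,0,0)$ does, so orbit (i) is isolated. To separate (ii) from (iii) I claim that $\J_{26}^{abABC}$, the subspace $\{c=0\}$, is also $S$-invariant: a direct inspection of the bottom-right entry of $\ovM^{\T} X M$ shows that $M_x, L_x, M_x'', L_x'$ all fix the $c$-coordinate, while $P_u'$ scales it by $\NN(u)\neq 0$, so every listed generator preserves $\{c=0\}$. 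Since $(0,0,0\mid e_0,0,0)$ has $c=0$ and $(0,0,1\mid 0,0,0)$ has $c\neq 0$, these two points cannot lie in a common $S$-orbit. Combining this with orbit (i) being isolated, the three representatives lie in distinct orbits, and with the exhaustiveness above we conclude there are exactly three.

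The main obstacle is the separation of orbits (ii) and (iii): one must verify that $S$ genuinely cannot move a white point with $c\neq 0$ into the hyperplane $\{c=0\}$. The cleanest route is the $S$-invariance of $\J_{26}^{abABC}$, which reduces to the short computation of the bottom-right entry of $\ovM^{\T} X M$ for each generator. Care is needed here because, unlike $\J_{10}^{abC}$, this $26$-space is \emph{not} fixed pointwise and the octonionic entries interact, so the invariance must be checked generator by generator rather than inferred from the action on $\J_{10}^{abC}$. The remaining reductions are routine once the whiteness relations $A\ovA=bc$, $BC=a\ovA$, and their cyclic images are used to kill the error terms at each step.
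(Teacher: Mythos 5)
Your proposal is correct and follows essentially the same route as the paper: transitivity of the induced $\Omega_{10}^+(F)$ on white points inside $\J_{10}^{abC}$ for orbit (i), the reductions by $M_{-c^{-1}B}''$ and $L_{-c^{-1}\ovA}$ for $c\neq 0$, the reductions via $M_{-a^{-1}C}$ or $L_{-b^{-1}\ovC}$, $\delta$ and the $P_u'$-generated $\Omega_8^+(F)$ for the case $c=0$, $(A,B)\neq(0,0)$, and separation of the orbits by the invariant subspaces. Your explicit check that $\J_{26}^{abABC}=\{c=0\}$ is preserved by every generator (including the scaling by $P_u'$) is a slightly more careful rendering of the paper's bare assertion that vectors with $c\neq 0$ cannot be mapped to vectors with $c=0$, but it is the same argument.
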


Now that we know the structure of the stabiliser of $\J_{10}^{abC}$ in $\SE_6(F)$, and moreover, we know the orbits of 
its action on white points, it is possible to figure out the joint stabilisers. The results are presented in a series of lemmas.

\begin{lemma}
	\label{lemma:1_stab_joint_1}
	The joint stabiliser in $\SE_6(F)$ of\phantom{;} $\J_{10}^{abC}$ and $\langle (1,0,0 \mid 0,0,0) \rangle$ is a subgroup
	of shape 
	\begin{equation}
		F^{16}\cn F^8\cn\Spin_8^+(F).F^{\times}.F^{\times}.
	\end{equation}
\end{lemma}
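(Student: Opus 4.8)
The plan is to realise the joint stabiliser as the stabiliser of the isotropic point $\langle v_1\rangle$, with $v_1=(1,0,0\mid 0,0,0)$, inside the group $G=F^{16}\cn\Spin_{10}^+(F).F^\times$ produced by Theorem~\ref{theorem:1_white_space_stab}, and then to peel off its factors one at a time. The key starting observation is that the two ``outer'' factors of $G$ pass into the stabiliser for free. First, the normal subgroup $F^{16}=\langle M_x'',L_x'\rangle$ acts trivially on $\J_{10}^{abC}$ (this is how it was produced in Theorem~\ref{theorem:1_white_space_stab}), so in particular it fixes $v_1$; since $F^{16}\nsgp G$ this already gives a decomposition in which the stabiliser of $\langle v_1\rangle$ in $G$ equals $F^{16}$ extended by the stabiliser of $\langle v_1\rangle$ in the complement $\Spin_{10}^+(F).F^\times$.

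Second, the top similarity factor $F^\times$ is represented by the elements $P_u'$, and a direct check using the formula $P_u':(a,b,0\mid 0,0,C)\mapsto(a,b\NN(u),0\mid 0,0,Cu)$ shows that every $P_u'$ fixes $v_1$. Hence this $F^\times$ survives intact and contributes the outermost $F^\times$ of the claimed shape. After removing these two factors it remains to compute the stabiliser of the point $\langle v_1\rangle$ inside the copy of $\Spin_{10}^+(F)=\langle M_x,L_x\rangle$, which acts on $\J_{10}^{abC}$ as $\Omega_{10}^+(F)$ for the plus-type form $\QQ_{10}=ab-C\ovC$.

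Here $v_1$ is isotropic. Taking $v_2=(0,1,0\mid 0,0,0)$ as its hyperbolic partner and $W=\J_8^C$ (with the induced plus-type form) as the orthogonal complement, Lemma~\ref{lemma:1_stabiliser_omega} identifies the stabiliser of the \emph{vector} $v_1$ in $\Omega_{10}^+(F)$ with $W\cn\Omega_8^+(F)=F^8\cn\Omega_8^+(F)$, which lifts in $\Spin_{10}^+(F)$ to $F^8\cn\Spin_8^+(F)$. Concretely I would identify the unipotent radical $F^8$ with the group $\langle L_x : x\in\OO\rangle$ of Lemma~\ref{lemma:1_q16}: a one-line computation from the action $L_x:(a,b,0\mid 0,0,C)\mapsto(a+b\NN(x)+\Tr(Cx),b,0\mid 0,0,C+b\ovx)$ shows each $L_x$ fixes $v_1$ and acts by the required transvection on $\langle v_1\rangle^\perp$. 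The Levi $\Spin_8^+(F)$ I would identify with $\langle P_u : \NN(u)=1\rangle$, which by Lemma~\ref{lemma:1_spin8plus} acts as $\Omega_8^+(F)$ on $\J_8^C$ while fixing both $v_1$ and $v_2$ (and lies in $\langle M_x,L_x\rangle$ via the factorisation $P_u=M_{u-1}L_1M_{u^{-1}-1}L_{-u}$). Passing from the vector $v_1$ to the point $\langle v_1\rangle$ then supplies the remaining torus: an element scaling $v_1\mapsto\lambda v_1$ while fixing $\J_8^C$ exists precisely for $\lambda$ a square by Lemma~\ref{lemma:1_2_actions}(a), and these assemble into a factor $F^\times$. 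Reassembling gives
\[
	F^{16}\cn F^8\cn\Spin_8^+(F).F^\times.F^\times,
\]
as claimed.

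The main obstacle I anticipate is the bookkeeping of the two torus factors and the upper bound. One must verify that the external similarity $F^\times$ (from $P_u'$, fixing $v_1$) and the internal $\Spin_{10}^+(F)$-torus $F^\times$ (scaling $v_1$ through squares) are genuinely independent, and that together with the Levi they assemble into the stated extension $.F^\times.F^\times$ rather than collapsing — paying attention to the square-scaling of Lemma~\ref{lemma:1_2_actions}(a) and its interaction with characteristic~$2$ (where every element of $\Fq^\times$ is a square, so the abstract torus is still a full $F^\times$). Equally, one must argue the upper bound: that \emph{nothing} outside $F^8\cn\Spin_8^+(F).F^\times$ inside $\Spin_{10}^+(F)$ can fix $\langle v_1\rangle$. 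This is exactly the assertion that the subgroup found is the full maximal parabolic, which follows from the exact vector-stabiliser computation of Lemma~\ref{lemma:1_stabiliser_omega} together with the observation that the point-versus-vector quotient is precisely the scaling torus analysed above.
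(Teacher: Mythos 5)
Your overall strategy is the same as the paper's (whose own proof is essentially a one-liner): pass to the $10$-space stabiliser $F^{16}\cn\Spin_{10}^+(F).F^{\times}$ of Theorem \ref{theorem:1_white_space_stab}, observe that the normal $F^{16}$ acts trivially on $\J_{10}^{abC}$ and that the white points of $\J_{10}^{abC}$ are exactly the isotropic points of $\QQ_{10}$, and then identify the joint stabiliser with $F^{16}$ extended by the isotropic-point stabiliser in $\Spin_{10}^+(F).F^{\times}$. Your concrete identifications of the pieces ($P_u'$ fixing $v_1$ and supplying the similarity $F^{\times}$, the $L_x$ as the unipotent radical $F^8$ via Lemma \ref{lemma:1_stabiliser_omega}, the $P_u$ with $\NN(u)=1$ as the Levi $\Spin_8^+(F)$ via Lemma \ref{lemma:1_spin8plus}) are all correct and add detail the paper omits.

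The one step that does not work as written is your description of the internal torus. Elements of $\Omega_{10}^+(F)$ that scale $v_1$ by $\lambda$ \emph{and fix $\J_8^C$ pointwise} exist only for $\lambda$ a square (Lemma \ref{lemma:1_2_actions}(a)), so those alone assemble into the subgroup of squares $(F^{\times})^2$, not into $F^{\times}$; and your fallback via characteristic $2$ does not cover odd characteristic or infinite fields, where $(F^{\times})^2$ is a proper subgroup. The correct statement is that the quotient of the point stabiliser by the vector stabiliser inside $\Omega_{10}^+(F)$ is the group of \emph{achieved} scaling factors, and every $\lambda\in F^{\times}$ is achieved: combine $\diag(\lambda,\lambda^{-1})$ on $\langle v_1,v_2\rangle$ with an element of $\SO_8(F,\QQ_8^C)$ acting on $\J_8^C$ whose spinor norm is $\lambda$ modulo squares; the product then lies in $\Omega_{10}^+(F)$, stabilises $\langle v_1\rangle$, and scales $v_1$ by $\lambda$. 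This is precisely why the extension is written $.F^{\times}$ rather than as a direct factor: the torus on top is not realised by elements acting trivially on $\J_8^C$. With that correction your upper and lower bounds meet and the shape $F^{16}\cn F^8\cn\Spin_8^+(F).F^{\times}.F^{\times}$ follows as in the paper.
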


\begin{proof}
	The orbit of $\langle (1,0,0 \mid 0,0,0) \rangle$ under the action of the stabiliser of $\J_{10}^{abC}$ is the set of all white points
	spanned by the vectors in this $10$-space.  The
	normal subgroup 
\mbox{$F^{16} \trianglelefteqslant F^{16}\cn\Spin_{10}^+(F).F^{\times}$} acts trivially on $\J_{10}^{abC}$.  
Any white point $\langle (a,b,0 \mid 0,0,C) \rangle$ satisfies $ab - C\ovC = 0$,  so preserving a white point
	is equivalent to preserving an isotropic point in $\Spin_{10}^+(F).F^{\times}$ and thus we obtain the action of a group of shape $F^{16}\cn F^8\cn\Spin_8^+(F).F^{\times}.F^{\times}$.
\end{proof}

\begin{lemma}
	\label{lemma1_stab_joint_2}
	The joint stabiliser in $\SE_6(F)$ of\phantom{;} $\J_{10}^{abC}$ and $\langle (0,0,0 \mid e_0,0,0) \rangle$ is a
	 subgroup
	of shape 
	\begin{equation}
		F^{11}\cn F^{10}\cn\SL_5(F).F^{\times}.F^{\times}. 
	\end{equation}
\end{lemma}

\begin{proof}
	The $17$-space $U$ of $\langle (0,0,0 \mid e_0,0,0) \rangle$ 
	consists of the vectors 
	$(0,b,c \mid A,B,C)$ where $b,c \in F\cdot 1_{\OO}$, and
	\begin{equation*}
		\begin{array}{r@{\;}c@{\;}l}
			A & \in & \langle e_{-1}, e_{\ombb}, e_{\omega}, e_0, e_{-\omega}, e_{-\ombb}, e_1 \rangle, \\
			B & \in & \langle e_{\ombb}, e_{\omega}, e_{-0}, e_1 \rangle, \\
			C & \in & \langle e_{-1}, e_{-0}, e_{-\omega}, e_{-\ombb} \rangle.
		\end{array}
	\end{equation*}
	The joint stabiliser preserves this space and hence also its intersection with $\J_{10}^{abC}$, which is the 
	$5$-dimensional space
	$\langle (0,b,0 \mid 0,0,C) \rangle$ where $C \in  \langle e_{-1}, e_{-0}, e_{-\omega}, e_{-\ombb} \rangle$. Consider the actions of
	$M_x$, $L_x$, $M_x''$, and $L_x'$ on this intersection:
	\begin{equation*}
		\begin{array}{r@{\;}c@{\;}l}
			M_x : (0,b,0\mid 0,0,C) & \mapsto & (0,b+\Tr(\ovx C),0\mid 0,0,C), \\
			L_x : (0,b,0\mid 0,0,C) & \mapsto & (b\NN(x) + \Tr(C x),b,0\mid 0,0,C), \\
			M_x'' : (0,b,0\mid 0,0,C) & \mapsto & (0,b,0\mid 0,0,C), \\
			L_x' : (0,b,0\mid 0,0,C) & \mapsto & (0,b,0\mid 0,0,C).
		\end{array}
	\end{equation*}	 
	It follows that $M_x$ preserves the $5$-space for any $x \in \OO$, and $L_x$ does so if and only if $b\NN(x) + \Tr(Cx) = 0$, from 
	which we find $x \in \langle e_{-1}, e_{0}, e_{-\omega}, e_{-\ombb} \rangle$. Finally, $M_x''$ and $L_x'$ act trivially on the intersection.

We choose the following basis in $\J_{10}^{abC}$:
	\begin{equation*}
		\begin{array}{r@{\;}c@{\;}l}
			v_1 & = & (0,-1,0 \mid 0,0,0), \\
			v_2 & = & (0,0,0 \mid 0,0, e_{-1}), \\
			v_3 & = & (0,0,0 \mid 0,0, e_{-0}), \\
			v_4 & = & (0,0,0 \mid 0,0, e_{-\omega}), \\
			v_5 & = & (0,0,0 \mid 0,0, e_{-\ombb}),
		\end{array}\ \ 
		\begin{array}{r@{\;}c@{\;}l}
			w_1 & = & (1,0,0 \mid 0,0,0), \\
			w_2 & = & (0,0,0 \mid 0,0,e_1), \\
			w_3 & = & (0,0,0 \mid 0,0,e_0), \\
			w_4 & = & (0,0,0 \mid 0,0,e_{\omega}), \\
			w_5 & = & (0,0,0 \mid 0,0,e_{\ombb}). 
		\end{array}
	\end{equation*}
	The first five vectors form a basis of $U\cap\J_{10}^{abC}$. Note that the elements  $M_x$ with 
	\mbox{$x \in \langle e_{-1}, e_{-0}, e_{-\omega}, e_{-\ombb}\rangle$} act trivially on this $5$-space.  With respect to the basis $\{ v_1, \ldots, v_5 \}$
we obtain all the elementary transvections with non-zero 
off-diagonal entry in the first row or the first column by
taking $M_{\lambda e_i}$ for 
$i \in \{\ombb, \omega, 0, 1\}$ and $L_{\lambda e_i}$ where
$i \in \{-1, -0, -\omega, -\ombb\}$, which generate
$\SL_5(F)$.	
	
Now we are going back to the action on the $10$-space. 
With respect to the chosen basis the elements 
	$M_x$ with $x \in \langle e_{\ombb}, e_{\omega}, e_0, e_1 \rangle$ and 
	$L_x$ with $x \in \langle e_{-1}, e_0, e_{-\omega}, e_{-\ombb} \rangle$ act as 
	the matrices of the form 
	\begin{equation*}
		\left[
			\begin{array}{@{\;\;\;\;\;}c@{\;\;\;\;\;}|c}
				S & 0 \\ \hline
				0 & (S^{-1})^{\T}
			\end{array}
		\right],
	\end{equation*}
	Indeed, for example $L_{\lambda e_{-1}}$ has the following matrix form:
	\begin{equation*}
	\begin{bmatrix}[r]
	
		1 & \lambda & \cdot & \cdot & \cdot & \cdot & \cdot & \cdot & \cdot & \cdot  \\
		 \cdot & \phantom{-}1 & \cdot & \cdot & \cdot & \cdot & \cdot & \cdot & \cdot & \cdot  \\
		 \cdot & \cdot & \phantom{-}1 &  \cdot & \cdot & \cdot & \cdot & \cdot & \cdot & \cdot \\
		  \cdot & \cdot & \cdot & \phantom{-}1 & \cdot & \cdot & \cdot & \cdot & \cdot & \cdot \\
		  \cdot & \cdot & \cdot & \cdot & \phantom{-}1 & \cdot & \cdot & \cdot & \cdot & \cdot \\
		  \cdot & \cdot & \cdot & \cdot & \cdot & \phantom{-}1 &  \cdot & \cdot & \cdot & \cdot \\
		  \cdot & \cdot & \cdot & \cdot & \cdot & -\lambda & \phantom{-}1 &  \cdot & \cdot & \cdot  \\
		  \cdot & \cdot & \cdot & \cdot & \cdot & \cdot & \cdot & \phantom{-}1 &  \cdot & \cdot \\
		  \cdot & \cdot & \cdot & \cdot & \cdot & \cdot & \cdot & \cdot & \phantom{-}1 & \cdot \\
		  \cdot & \cdot & \cdot & \cdot & \cdot & \cdot & \cdot & \cdot & \cdot & \phantom{-}1  
	
	\end{bmatrix}.
	\end{equation*}
The map $S \mapsto (S^{-1})^{\T}$ is an automorphism of 
$\SL_5(F)$ of duality type. Recall that the action 
generated by the
 elements $M_x$ and $L_x$, where $x$ ranges through $\OO$, 
 on the whole $27$-space $\J$ has a non-trivial kernel,
 generated by the element $P_{-1}$, when the characteristic of 
 the field is not $2$. In the current situation we do not
 get this central involution as those $M_x$ and $L_x$ which 
 generate $\SL_5(F)$ fix the vector $(0,0,0 \mid e_0,0,0)$
 in the $16$-space $\J_{16}^{AB}$.

The elements $M_x$ with $x \in \langle e_{-1}, e_{-0}, e_{-\omega}, e_{-\ombb} \rangle$ act trivially on $U\cap \J_{10}^{abC}$, but
	on the whole $10$-space they act as
	\begin{equation*}
		\left[
			\begin{array}{c|c}
				\II_5 & 0 \\ \hline
				A & \II_5
			\end{array}
		\right],
	\end{equation*}
	where $A$ is a $5 \times 5$ matrix. For instance, the element $M_{\lambda e_{-1}}$ is represented by the matrix
	\begin{equation*}
	\begin{bmatrix}[r]
	
		1 & \cdot & \cdot & \cdot & \cdot & \cdot & \cdot & \cdot & \cdot & \cdot  \\
		 \cdot & 1 & \cdot & \cdot & \cdot & \cdot & \cdot & \cdot & \cdot & \cdot  \\
		 \cdot & \cdot &1 &  \cdot & \cdot & \cdot & \cdot & \cdot & \cdot & \cdot \\
		  \cdot & \cdot & \cdot & 1 & \cdot & \cdot & \cdot & \cdot & \cdot & \cdot \\
		  \cdot & \cdot & \cdot & \cdot & 1 & \cdot & \cdot & \cdot & \cdot & \cdot \\
		  \cdot & \lambda  & \cdot & \cdot & \cdot & 1 &  \cdot & \cdot & \cdot & \cdot \\
		  -\lambda & \cdot & \cdot & \cdot & \cdot & \cdot & 1 &  \cdot & \cdot & \cdot  \\
		  \cdot & \cdot & \cdot & \cdot & \cdot & \cdot & \cdot & 1 &  \cdot & \cdot \\
		  \cdot & \cdot & \cdot & \cdot & \cdot & \cdot & \cdot & \cdot & 1 & \cdot \\
		  \cdot & \cdot & \cdot & \cdot & \cdot & \cdot & \cdot & \cdot & \cdot & 1  
	
	\end{bmatrix}.
	\end{equation*}
With respect to our chosen basis, the inner product in 
	$\J_{10}^{abC}$ has the form
	\begin{equation*}
		\left[
			\begin{array}{c|c}
				0 & -\II_5 \\ \hline
				-\II_5 & 0
			\end{array}
		\right],
	\end{equation*}
	and so we can derive the conditions on the matrix $A$:
	\begin{equation*}
		\left[
			\begin{array}{c|c}
				\II_5 & 0 \\ \hline
				A & \II_5
			\end{array}
		\right]
		\left[
			\begin{array}{c|c}
				0 & -\II_5 \\ \hline
				-\II_5 & 0
			\end{array}
		\right]
		\left[
			\begin{array}{c|c}
				\II_5 & A^{\T} \\ \hline
				0 & \II_5
			\end{array}
		\right] = 
		\left[
			\begin{array}{@{\;\;\;\;\;\;\;}c@{\;\;\;\;\;\;\;}|c}
				 0 & -\II_5 \\ \hline
				-\II_5 & -(A+A^{\T})
			\end{array}
		\right].
	\end{equation*}
	It follows that $A + A^{\T} = 0$. In characteristic other than $2$ this also implies that the diagonal entries in $A$ are zero. 
	In characteristic $2$ we need to consider the quadratic form to obtain the same result.

Let $g$ be an element of the joint
stabiliser represented by the matrix
	\begin{equation*}
		\left[
			\begin{array}{c|c}
				\II_5 & 0 \\ \hline
				A & \II_5
			\end{array}
		\right],
	\end{equation*}
	with respect to the chosen basis.  We have $v_i^g = v_i$ and \mbox{$w_i^g = w_i + \sum_j A_{i j} v_j$} for all $i$ such that
	$1 \leqslant i \leqslant 5$. It follows that 
\begin{multline*}
		\QQ_{10}(w_i^g) = \QQ_{10}(w_i + A_{i 1} v_1 + \ldots + A_{i 5} v_5) \\
		    =\QQ_{10}(w_i) + \inner{w_i}{A_{i 1} v_1 + \ldots + A_{i 5} v_5} + \QQ_{10}(A_{i 1} v_1 + \ldots + A_{i 5} v_5) \\
		    = \inner{w_i}{A_{i 1} v_1 + \ldots + A_{i 5} v_5}  = -A_{ii},
\end{multline*}
	so we indeed have $A_{i i} = 0$ for $1 \leqslant i \leqslant 5$. 
	
	Regardless of characteristic, such matrices $A$ reside
	in a $10$-dimensional $F$-space. 
	In fact, all such matrices
	span a $10$-dimensional $F$-space $W$, and the spanning set
	can be obtained, for example, by taking relevant 
	$5 \times 5$ blocks in the $10 \times 10$ matrices, 
	representing the following elements:
\begin{equation*}
	\begin{array}{c@{\ \ \ \ \ \ \ \ \ \ \ }c}
M_{e_{-1}},&
	(L_{e_{-\omega}})^{-1}\cdot M_{e_{-1}}\cdot L_{e_{-\omega}}, \\

M_{e_{-0}},&
	(L_{e_{-\ombb}})^{-1}\cdot M_{e_{-1}}\cdot L_{e_{-\ombb}}, \\
	
M_{e_{-\omega}},&
	(L_{e_{-\omega}})^{-1}\cdot M_{e_{-0}}\cdot L_{e_{-\omega}}, \\
	
M_{e_{-\ombb}},&
	(L_{e_{-\ombb}})^{-1}\cdot M_{e_{-0}}\cdot L_{e_{-\ombb}}, \\

(L_{e_{0}})^{-1}\cdot M_{e_{-1}}\cdot L_{e_{0}}, &

(L_{e_{-\ombb}})^{-1}\cdot M_{e_{-\ombb}}\cdot L_{e_{-\ombb}}.
	\end{array}
\end{equation*}

Now, $V$, spanned by $v_1, \ldots, v_5$, 
is an $FG$-module with $G = \SL_5(F)$. We identify the 
$25$-space spanned by the $5 \times 5$ matrices
$E_{i j}$ (having $1$ as the $(i,j)$-entry and zeroes in all
other positions) with the tensor square $V \otimes V$ by
establishing an isomorphism 
$\varphi: E_{i j} \mapsto v_i \otimes v_j$
and extending by linearity. An arbitrary element $g$ of 
$\SL_5(F)$ acts on these matrices via the map 
$E_{i j} \mapsto S^{\T} E_{i j} S$ 
for some $5 \times 5$ matrix $S$,
and we have 
\begin{multline*}
	(v_i \otimes v_j)^g = \varphi(E_{i j}^g) = 
		\varphi\left(\sum_{r,s} S_{i r} S_{j s} E_{r s}\right) 
	= \sum_{r,s} S_{i r} S_{j s} (v_r \otimes v_s) \\ =
	\sum_{r,s} (S_{i r} v_r) \otimes (S_{j s} v_s) = 
	(e_i S) \otimes (e_j S).
\end{multline*}
Thus, $\varphi$ is indeed an isomorphism of $FG$-modules. 
The elements $E_{i j} - E_{j i}$ form a spanning set for the
$10$-space $W$, defined earlier, and so the image of 
$W$ under the isomorphism $\varphi$ is $\bigwedge^2 (V)$,
as a submodule of $V \otimes V$. Thus, we have
	the action of $\SL_5(F)$ on $\bigwedge^2(V) \cong F^{10}$.
	
The elements $P_u$ and $P_u'$ act on the white vector $(0,0,0 \mid e_0,0,0)$ in the following way:
\begin{equation*}
	\begin{array}{r@{\;}c@{\;}l}
		P_u : (0,0,0 \mid e_0, 0,0) & \mapsto & (0,0,0 \mid u e_0 \NN(u)^{-1}, 0,0), \\
		P_u' : (0,0,0 \mid e_0,0,0) & \mapsto & (0,0,0 \mid \ovu e_0 \ovu \NN(u)^{-1}, 0,0).
	\end{array}
\end{equation*}
Taking $u = \lambda \cdot 1_{\OO}$ where $\lambda \in F$ for both $P_u$ and $P_u'$, we obtain the action
of the group of shape $F^{\times}.F^{\times}$, which preserves the white point $\langle (0,0,0 \mid e_0,0,0) \rangle$.

So far we have shown that the joint stabiliser is  no bigger than the group of shape  \mbox{$F^{16}\cn F^{10}\cn \SL_5(F).F^{\times}.F^{\times}$}.
We notice that $M_x''$ with $x \in \langle e_{-1}, e_0, e_{-\omega}, e_{-\ombb} \rangle$ and $L_y'$ with 
$y \in \langle e_{-1}, e_{\ombb}, e_{\omega}, e_{-0}, e_{-\omega}, e_{-\ombb}, e_1\rangle$ preserve
the white point $\langle (0,0,0 \mid e_0,0,0) \rangle$. These elements generate an abelian group isomorphic to $F^{11}$. Let now
$M_x'' \cdot L_x'$ be a general element in $F^{16}$. It sends $(0,0,0 \mid e_0, 0,0)$ to $(0,\Tr(e_0 y),0 \mid e_0, 0, \ovx e_{-0})$,
so we get the following conditions on $x$ and $y$:
\begin{equation*}
	\left.
		\begin{array}{r@{\;}c@{\;}l}
			\ovx e_{-0} & = & 0, \\
			\Tr(e_0 y) & = & 0.
		\end{array}
	\right\}
\end{equation*}
The first condition implies that $\ovx e_0 \in \langle e_{\ombb}, e_{\omega}, e_0, e_1 \rangle$, so
$x \in \langle e_{-1}, e_0, e_{-\omega}, e_{-\ombb} \rangle$. From the second condition we derive
$y \in \langle e_{-1}, e_{\ombb}, e_{\omega}, e_{-0}, e_{-\omega}, e_{-\ombb}, e_1\rangle$, 
so indeed the joint stabiliser is $F^{11}\cn F^{10} \cn \SL_5(F).F^{\times}.F^{\times}$. 
\end{proof}

We have already seen the proof of the following lemma (Theorem \ref{theorem:1_white_space_stab}), so we just state the result here.
\begin{lemma}
	\label{lemma1_stab_joint_3}
	The joint stabiliser in $\SE_6(F)$ of\phantom{;} $\J_{10}^{abC}$ and $\langle (0,0,1 \mid 0,0,0) \rangle$ is a subgroup
	of shape 
	\begin{equation}
		\Spin_{10}^+(F).F^{\times}.
	\end{equation}
\end{lemma}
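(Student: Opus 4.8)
The plan is to realise this joint stabiliser as an intersection of two groups whose internal structure we already control completely, and then to exploit a semidirect-product splitting. By Theorem~\ref{theorem:1_white_stab} the stabiliser $G_W$ in $\SE_6(F)$ of the white point $W = \langle (0,0,1\mid 0,0,0)\rangle$ has shape $F^{16}\cn\Spin_{10}^+(F).F^{\times}$; concretely it decomposes as a semidirect product $G_W = T_1 \rtimes H$, in which the normal subgroup $T_1 \cong F^{16}$ is generated by the actions of $M_x'$ and $L_y''$, while the complement $H \cong \Spin_{10}^+(F).F^{\times}$ is generated by the actions of $M_x$, $L_x$ and $P_u'$. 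Since the object we seek is exactly $G_W \cap \mathrm{Stab}_{\SE_6(F)}(\J_{10}^{abC})$, it suffices to decide which elements of $G_W$ preserve $\J_{10}^{abC}$.

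First I would note that the whole complement $H$ already lies in the stabiliser of $\J_{10}^{abC}$: this is precisely the content of Lemma~\ref{lemma:1_white_space_stab}, where the generators $M_x$, $L_x$ and $P_u'$ were each shown to send $(a,b,0\mid 0,0,C)$ back into $\J_{10}^{abC}$. Hence $H \leqslant G_W \cap \mathrm{Stab}(\J_{10}^{abC})$. Next I would show that the normal subgroup $T_1$ meets the stabiliser of $\J_{10}^{abC}$ trivially. A typical element of $T_1$ is $M_x'\cdot L_y''$, whose image of a vector $(a,b,0\mid 0,0,C)$ was computed in the proof of Theorem~\ref{theorem:1_white_space_stab}; insisting that this image remain in $\J_{10}^{abC}$ forces its $A$- and $B$-components to vanish, and specialising to $C = 0$ with $b = 1$ (respectively $a = 1$) yields $x = 0$ (respectively $y = 0$). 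Thus $T_1 \cap \mathrm{Stab}(\J_{10}^{abC}) = 1$.

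Finally I would glue these two observations together via the splitting $G_W = T_1 \rtimes H$. Writing an arbitrary element $g$ of the joint stabiliser as $g = t h$ with $t \in T_1$ and $h \in H$, the element $h$ preserves $\J_{10}^{abC}$ by the first step and $g$ preserves it by hypothesis, so $t = g h^{-1}$ preserves $\J_{10}^{abC}$ too; the second step then forces $t = 1$, whence $g = h \in H$. Combined with $H \leqslant G_W \cap \mathrm{Stab}(\J_{10}^{abC})$, this gives the joint stabiliser as exactly $H \cong \Spin_{10}^+(F).F^{\times}$.

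There is essentially no new computation to perform here, since both intersection facts were already extracted inside the proof of Theorem~\ref{theorem:1_white_space_stab}; the only point that needs care is the bookkeeping of which generators fall into the normal part $T_1$ and which into the complement $H$, so that the semidirect-product argument applies verbatim. That careful identification of $T_1$ and $H$ is the single step I would treat as the main (albeit mild) obstacle.
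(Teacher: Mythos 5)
Your proof is correct and follows essentially the same route as the paper: the paper's own justification (deferred to the proof of Theorem~\ref{theorem:1_white_space_stab}) likewise identifies the complement $H=\langle M_x,L_x,P_u'\rangle\cong\Spin_{10}^+(F).F^{\times}$ inside $G_W$ as lying in the stabiliser of $\J_{10}^{abC}$, and shows by the same specialisation ($C=0$ with $b=1$, resp.\ $a=1$) that no non-trivial element of the transversal $T_1\cong F^{16}$ preserves that $10$-space. Your explicit semidirect-product bookkeeping $g=th$ just makes the paper's implicit final step precise.
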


\section{Conclusions}

We have managed to obtain a self-contained construction of a group of type $\EE_6$ over an
arbitrary field $F$. However, we want to point out that there is a space for future research.
For example, the main result in Section \ref{section:stabiliser} depends on the fact 
that the underlying octonion algebra $\OO$ is split. This completely covers the possibilities
in case $F = \Fq$, but it seems to be quite interesting to understand to what extent it is
possible to generalise our result in the case when a non-split octonion algebra exists. 
The main problem here is to be able to tell whether the actions of the matrices $M_x$ and 
$L_x$ on $\J_{10}^{abC}$ generate $\Omega( \J_{10}^{abC}, \QQ_{10} )$. At this stage it is
possible to prove the following proposition.

\begin{proposition}
	The actions of the elements $M_x$ and $L_x$ on $\J_{10}^{abC}$ where $x$ ranges through
	a non-split octonion algebra $\OO$, generate a subgroup of $\Omega( \J_{10}^{abC},
	\QQ_{10} )$.
\end{proposition}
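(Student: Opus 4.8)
The plan is to prove the containment $\langle M_x, L_x \mid x \in \OO\rangle \leqslant \Omega(\J_{10}^{abC}, \QQ_{10})$ by showing that each individual generator already lies in $\Omega$; as $\Omega$ is a group, the assertion follows at once. I would carry this out in three steps: first verify that $M_x$ and $L_x$ preserve the form $\QQ_{10}$, then identify them explicitly as Eichler (Siegel) transformations, and finally appeal to the standard fact that such transformations have trivial spinor norm and determinant in every characteristic.

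First I would record the action on $\J_{10}^{abC}$ relative to the vectors $e_a = (1,0,0\mid 0,0,0)$ and $e_b = (0,1,0\mid 0,0,0)$, which form a hyperbolic pair for $\QQ_{10} = ab - C\ovC$ with the $C$-space as their orthogonal complement. Reading off the formula for $M_x$, one finds that $M_x$ fixes $e_b$, sends $e_a \mapsto e_a + \NN(x) e_b + x$, and sends $C \mapsto C + \Tr(\ovx C) e_b$ for $C$ in the $C$-space. That $\QQ_{10}$ is preserved is a one-line check from the octonion identity $\NN(C + ax) = C\ovC + a^2\NN(x) + a\Tr(\ovx C)$; the computation for $L_x$ is identical under $a \leftrightarrow b$, giving $L_x$ fixing $e_a$, $e_b \mapsto e_b + \NN(x) e_a + \ovx$, and $C \mapsto C + \Tr(Cx) e_a$. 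Since this uses only multiplicativity of $\NN$ and the shape of its polar form, it is valid for an arbitrary octonion algebra.

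Next I would observe that these maps are Eichler transformations. With the characteristic-free expression
\begin{equation*}
	E_{u,w}(v) = v - \langle v, w\rangle u + \langle v, u\rangle w - \QQ_{10}(w)\langle v, u\rangle u,
\end{equation*}
where $\langle \cdot,\cdot\rangle$ is the polar form of $\QQ_{10}$, $u$ is isotropic and $w$ is orthogonal to $u$, one checks directly that $M_x = E_{e_b, x}$ and $L_x = E_{e_a, \ovx}$. Because $\OO$ is non-split its norm form is anisotropic, so for $x \neq 0$ the vector $w$ is anisotropic; exactly as in the two-reflection decomposition of Lemma \ref{lemma:1_spin8plus} one may then write such an Eichler transformation as a product of two reflections in anisotropic vectors of equal $\QQ_{10}$-norm $-\NN(x)$, for instance $x$ and $x + \NN(x)e_b$. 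The spinor norm of this product is $(-\NN(x))^2$, a square, hence trivial, and the transformation is unipotent, hence of determinant $1$.

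In odd characteristic this already places $M_x$ and $L_x$ in $\Omega(\J_{10}^{abC}, \QQ_{10})$, and in characteristic $2$ the same conclusion follows from the standard fact that Eichler transformations have trivial Dickson invariant and spinor norm (see \cite{Taylor}); either way the generated group lies in $\Omega$, which is precisely the claim. I expect the characteristic-$2$ verification to be the only genuinely delicate point, since there true reflections must be replaced by orthogonal transvections and $\Omega$ is cut out by the Dickson invariant rather than by the classical spinor norm. Finally, it is worth stressing why only containment, and not equality, can be obtained: the transitivity and maximality results of Section \ref{section:orthogonal} that drive the split-case argument all require Witt index at least $2$, whereas the anisotropic norm form forces $\QQ_{10}$ to have Witt index exactly $1$ when $\OO$ is non-split, so those tools are simply unavailable here.
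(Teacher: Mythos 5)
Your proof is correct and takes essentially the same route as the paper: the paper likewise writes $M_x$ and $L_x$ as products of two reflections in vectors of equal $\QQ_{10}$-norm (it uses $(0,\NN(x),0\mid 0,0,\ovx)$ and $(0,0,0\mid 0,0,\ovx)$ for $M_x$, where you use $x$ in place of $\ovx$) and concludes that the spinor norm is a square, hence trivial. Your Eichler-transformation packaging, the explicit check that $\QQ_{10}$ is preserved, and the attention to the Dickson invariant in characteristic $2$ are more careful renderings of the same idea rather than a different argument.
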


\begin{proof}
	It is straightforward to verify that the image of $(a,b,0\mid 0,0,C)$ under the action
	of $M_x$ coincides with the image under the reflexion in $(0,\NN(x),0 \mid 0,0,\ovx)$ followed
	by a reflexion in $(0,0,0 \mid 0,0,\ovx)$. The norm of both vectors is $\NN(x)$, so we conclude
	that $M_x$ acts as an element of $\Omega( \J_{10}^{abC}, \QQ_{10} )$. 
	
	For the element $L_x$ we take the vectors $(\NN(x),0,0\mid 0,0,x)$ and $(0,0,0\mid 0,0,x)$ to
	obtain the same conclusion. 
\end{proof}

Now suppose that $V$ is a vector space over $F$ with a quadratic form $Q$, 
such that $V = \langle e,f \rangle  \oplus W$, where $(e, f)$ is a hyperbolic pair and 
$W = \langle e, f \rangle^{\perp}$. Consider an element $g$ in $\CGO(V,Q)$ which scales of $Q$ 
by some $\lambda \neq 0$. Then
$V = \langle e^g, f^g \rangle \oplus W^g$ and $\langle e^g, f^g \rangle$ is
isometric to $\langle e, f \rangle$. Therefore, $W^g$ is isometric to 
$W$, and so there exists an isometry $h$ in $\GO(V,Q)$ such that 
$\langle e^g, f^g \rangle^h = \langle e, f \rangle$. It follows that 
$(W^g)^h = W$, and $gh$ is a $\lambda$-scaling of $Q$ which fixes
$\langle e, f \rangle$ and $W$. Hence, $gh$ is a $\lambda$-scaling of $Q_W$. 

Consider a $\lambda$-similarity on $\OO = \OO_F$ that sends $1_{\OO}$ to some $u \in \OO$. 
Then it gives rise to an element in the stabiliser of a white point which scales $\QQ_8$ by
$\NN(u)$. In other words, we have shown the following.

\begin{proposition}
	If $\OO$ is an arbitrary octonion algebra over $F$, then the elements in the stabiliser
	of a white point can only scale a white vector by $\lambda$, where $\lambda \in F$ is
	such that there exists $u \in \OO$ with $\NN(u) = \lambda$.  
\end{proposition}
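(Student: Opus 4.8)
The plan is to trace how a scaling of a white vector propagates through the quadratic-form data attached to the corresponding white point, and then to read off the constraint on $\lambda$ from the octonion norm. Since $\SE_6(F)$ acts transitively on white points for any octonion algebra $\OO$, I would fix the representative white vector $v = (0,0,1\mid 0,0,0)$ and let $g$ be an element of its point stabiliser with $vg = \lambda v$ for some $\lambda \in F^{\times}$. The first step is to recall that $g$ fixes the $17$-dimensional radical $\J_{17}^{cAB}$ canonically determined by $\langle v\rangle$, and therefore acts on the quotient $\J/\J_{17}^{cAB}$, which we identify with the complementary space $\J_{10}^{abC}$. As was already observed, the assignment $\fdet(X+v)-\fdet(X) = ab - C\ovC \mapsto \lambda(a'b'-C'\ovC')$ shows that this induced action scales the form $\QQ_{10} = ab - C\ovC$ by the factor $\lambda$; in other words, $g$ induces an element of $\CGO_{10}$ with multiplier $\lambda$.

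Next I would exploit the orthogonal splitting $\J_{10}^{abC} = \langle \J_1^a, \J_1^b\rangle \perp \J_8^C$, in which $(\J_1^a, \J_1^b)$ is a hyperbolic pair and $\J_8^C$ carries, up to sign, the octonion norm, since $\QQ_{10}$ restricts to $-C\ovC = -\NN(C)$ there. At this point the $\CGO(V,Q)$ observation recorded just above the proposition does the decisive work: a $\lambda$-similarity of a form equipped with such a hyperbolic-plane decomposition can be composed with a genuine isometry $h \in \GO$ so that the product fixes both $\langle \J_1^a, \J_1^b\rangle$ and its perp $\J_8^C$ and restricts on the latter to a $\lambda$-similarity of $\QQ_8$. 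Transporting this back through the identification $\J_8^C \cong \OO$, $(0,0,0\mid 0,0,C)\leftrightarrow C$, I would obtain a $\lambda$-similarity $s$ of the norm space $(\OO, \NN)$.

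The final step is to evaluate this similarity at the identity octonion. A $\lambda$-similarity $s$ satisfies $\NN(xs) = \lambda\,\NN(x)$ for all $x$, so putting $u = 1_{\OO}\,s$ and using $\NN(1_{\OO}) = 1$ yields $\NN(u) = \lambda$. Hence every scaling factor $\lambda$ realised on a white vector by its point stabiliser lies in the image of the norm, which is exactly the assertion. As a consistency check, when $\OO$ is split the norm is surjective and we recover the full factor group $F^{\times}$ appearing in Theorem~\ref{theorem:1_white_stab}.

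The step I expect to need the most care is the transfer of the multiplier from the $10$-dimensional form to the $8$-dimensional norm form, namely verifying that the correcting isometry $h$ can be chosen to respect the splitting without disturbing the scalar $\lambda$, so that the restriction to $\J_8^C$ is an honest $\lambda$-similarity rather than a similarity with some other multiplier. This is precisely what the preceding $\CGO(V,Q)$ paragraph guarantees once $(e,f) = (\J_1^a,\J_1^b)$ and $W = \J_8^C$ are substituted, so the remaining work is bookkeeping with the identifications. The characteristic-$2$ case requires no separate treatment, because the argument uses only the quadratic form and its hyperbolic-plane splitting, both of which are available in every characteristic.
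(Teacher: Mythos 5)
Your proposal is correct and follows essentially the same route as the paper: the point stabiliser induces a $\lambda$-similarity of $\QQ_{10}$ on $\J_{10}^{abC}$, the $\CGO(V,Q)$ observation with the hyperbolic pair $(\J_1^a,\J_1^b)$ and $W=\J_8^C$ reduces this to a $\lambda$-similarity of the octonion norm, and evaluating at $1_{\OO}$ gives $u$ with $\NN(u)=\lambda$. If anything, your write-up makes the direction of the argument (from stabiliser element to norm value) more explicit than the paper's own rather terse two paragraphs.
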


It is easy to check that all such scalings are possible. For example, the elements 
$P_{u^{-1}}'$, defined in (\ref{eq:puprime}), do the job.


\end{document}